\definecolor{orangebis}{rgb}{0.99,0.25,0.00}
\definecolor{greenbis}{rgb}{0.10,0.85,0.10}
\definecolor{bluebis}{rgb}{0.10,0.30,0.99}
\author{Alejandro Rivera and Hugo Vanneuville}
\title{}
\date{}
\theoremstyle{plain}
\newtheorem{thm}{Theorem}[section]
\newtheorem{prop}[thm]{Proposition}
\newtheorem{lem}[thm]{Lemma}
\newtheorem{cor}[thm]{Corollary}
\newtheorem{claim}[thm]{Claim}
\newtheorem{lemma}[thm]{Lemma}
\theoremstyle{definition}
\newtheorem{defi}[thm]{Definition}
\newtheorem{notation}[thm]{Notation}
\newtheorem{condition}[thm]{Condition}
\newtheorem{remark}[thm]{Remark}
\theoremstyle{remark}
\newcommand{\N}{\mathbb{N}}
\newcommand{\R}{\mathbb{R}}
\newcommand{\C}{\mathbb{C}}
\newcommand{\Z}{\mathbb{Z}}
\newcommand{\Pro}{\mathbb{P}}
\newcommand{\prob}{\mathbb{P}}
\newcommand{\E}{\mathbb{E}}
\newcommand{\cross}{\text{\textup{Cross}}}
\newcommand{\cov}{\text{\textup{Cov}}}
\newcommand{\var}{\text{\textup{Var}}}
\newcommand{\piv}{\text{\textup{Piv}}}
\newcommand{\arm}{\text{\textup{Arm}}}
\newcommand{\Circ}{\text{\textup{Circ}}}
\newcommand{\Piv}{\text{\textup{Piv}}}
\newcommand{\vq}{\overrightarrow{q}}
\newcommand{\calQ}{\mathcal{Q}}
\newcommand{\card}{\text{\textup{Card}}}
\newcommand{\ar}{\text{\textup{Area}}}
\newcommand{\leng}{\text{\textup{Length}}}
\newcommand{\calX}{\mathcal{X}}
\renewcommand{\C}{\mathbb{C}}
\renewcommand{\Z}{\mathbb{Z}}
\renewcommand{\N}{\mathbb{N}}
\newcommand{\un}{\mathds{1}}
\newcommand{\cond}{\, \Big| \,}
\renewcommand{\textbf}[1]{\begingroup\bfseries\mathversion{bold}#1\endgroup}
\def\calA{\mathcal{A}}
\def\calB{\mathcal{B}}
\def\calC{\mathcal{C}}
\def\calD{\mathcal{D}}
\def\calE{\mathcal{E}}
\def\calH{\mathcal{H}}
\def\calK{\mathcal{K}}
\def\calL{\mathcal{L}}
\def\calN{\mathcal{N}}
\def\calQ{\mathcal{Q}}
\def\calS{\mathcal{S}}
\def\calT{\mathcal{T}}
\def\calU{\mathcal{U}}
\def\calV{\mathcal{V}}
\def\calX{\mathcal{X}}
\def\var{\mathop{\mathrm{Var}}}
\def\cov{\mathrm{Cov}}
\def\E{\mathbb{E}} 
\def \eps {\varepsilon}
\def\<#1{\langle #1\rangle}
\def\bi{\begin{itemize}}  
\def\ei{\end{itemize}}
\def\bnum{\begin{enumerate}} 
\def\enum{\end{enumerate}}
\def\ni{\noindent}
\def\bf{\bfseries}
\numberwithin{equation}{section}
\title{Quasi-independence for nodal lines}
\author{Alejandro Rivera\thanks{Univ. Grenoble Alpes, UMP5582, Institut Fourier, 38000 Grenoble, France, supported by the ERC grant Liko No 676999} \hspace{1cm} Hugo Vanneuville\thanks{Univ. Lyon 1, Institut Camille Jordan, 69100 Villeurbanne, France, supported by the ERC grant Liko No 676999}}
\date{}
\begin{document}

\maketitle

\abstract{We prove a quasi-independence result for level sets of a planar centered stationary Gaussian field with covariance $(x,y)\mapsto\kappa(x-y)$, with only mild conditions on the regularity of $\kappa$. As a first application, we study percolation for nodal lines in the spirit of~\cite{bg_16}. In the said article, Beffara and Gayet rely on Tassion's method (\cite{tassion2014crossing}) to prove that, under some assumptions on $\kappa$, most notably that $\kappa \geq 0$ and $\kappa(x)=O(|x|^{-325})$, the nodal set satisfies a box-crossing property. The decay exponent was then lowered to $16+\eps$ by Beliaev and Muirhead in \cite{bm_17}. In the present work we lower this exponent to $4+\eps$ thanks to a new approach towards quasi-independence for crossing events. This approach does not rely on quantitative discretization. Our quasi-independence result also applies to events counting nodal components and we obtain a lower concentration result for the density of nodal components around the Nazarov and Sodin constant from~\cite{nazarov2015asymptotic}.

\tableofcontents

\section{Introduction}\label{s.intro}


In this article, we prove a quasi-independence result for level lines of planar Gaussian fields and present two applications of this result. First, we use it to revisit and generalize the results by Gayet and Beffara~\cite{bg_16} who initiated the study of large scale connectivity properties for nodal lines and nodal domains of planar Gaussian fields. Second, we apply it to the study of the concentration of the number of nodal lines around the Nazarov and Sodin  constant (the constant $\nu$ of Theorem 1 of \cite{nazarov2015asymptotic}). Let $f$ be a planar centered Gaussian field. The \textbf{covariance function} of $f$ is the function $K:\R^2\times\R^2\rightarrow\R$ defined by:
\[
\forall x,y\in\R^2,\ K(x,y)=\E[f(x)f(y)] \, .
\]
We assume that $f$ is normalized so that for each $x\in\R^2$, $K(x,x)=\var(f(x))=1$, that it is non-degenerate (i.e. for any pairwise distinct $x_1,\dots,x_k\in\R^2$, $(f(x_1),\cdots,f(x_k))$ is non-degenerate), and that it is a.s. continuous and stationary. In particular, there exists a strictly positive definite continuous function $\kappa:\R^2\rightarrow[-1,1]$ such that $\kappa(0)=1$ and, for each $x,y\in\R^2$, $K(x,y)=\kappa(x-y)$. We will also refer to $\kappa$ as covariance function when there is no possible ambiguity. For each $p\in\R$ we call \textbf{level set} of $f$ the random set $\mathcal{N}_p:=f^{-1}(-p)$ and \textbf{excursion set} of $f$ the random set $\mathcal{D}_p:=f^{-1}([-p,+\infty[)$.\footnote{This convention, while it may seem counterintuitive, is convenient because it makes $\calD_p$ increasing both in $f$ and in $p$. See~\cite{rv_bf}.} Let us first state our result regarding planar box-crossing properties.

\paragraph{Box crossing estimates for planar Gaussian fields.} In~\cite{bg_16}, the authors give conditions under which such sets satisfy a box-crossing property at $p=0$. We say that random sets satisfy a box-crossing property if for any quad (i.e. a topological rectangle with two opposite distinguished sides) $\calQ$ there exists a positive constant $c$ such that for any (potentially sufficiently large) scale $s$, there is a crossing of $s\calQ$ between distinguished sides by the random set with probability larger than $c$. The study of the case $p=0$ is natural since this is the level at which duality arises, see for instance Remark~\ref{r.planar_duality} in our appendix. The most important conditions asked in~\cite{bg_16} were some symmetry conditions, the fact that $f$ is positively correlated (which means that the covariance function $\kappa$ takes only non-negative values) and a sufficiently fast decay for $\kappa(x)$ as $|x|$ does to $+\infty$, namely $\kappa(x)=O\left(|x|^{-325}\right)$. In~\cite{bm_17}, Beliaev and Muirhead have lowered the exponent $325$ to any $\alpha > 16$. In the present paper, we lower this exponent to any $\alpha > 4$, thus obtaining the following result:
\begin{thm}\label{t.RSW}
Assume that $f$ is a non-degenerate, centered, normalized, continuous, stationary, positively correlated planar Gaussian field that satisfies the symmetry assumption Condition~\ref{a.std} below. Assume also that $\kappa$ satisfies the differentiability assumption Condition~\ref{a.decay2} below and that $\kappa(x) \leq C |x|^{-\alpha}$ for some $C<+\infty$ and $\alpha > 4$. Let $\calQ$ be a quad, i.e. a simply connected bounded open subset of $\R^2$ whose boundary $\partial\calQ$ is piecewise smooth boundary with two distinguished disjoint segments on $\partial\calQ$. Then, there exists $c=c(\kappa,\calQ)>0$ such that for each $s \in ]0,+\infty[$, the probability that there is a continuous path in $\mathcal{D}_0\cap s\calQ$ joining one distinguished side to the other is at least $c$. Moreover, there exists $s_0<+\infty$ such that the same result holds for $\mathcal{N}_0$ as long as $s\geq s_0$.
\end{thm}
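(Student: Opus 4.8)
The plan is to prove Theorem~\ref{t.RSW} by combining a Russo--Seymour--Welsh type argument with the quasi-independence statement that will be established earlier in the paper. The overall scheme follows the standard route: (i) show that crossing probabilities of squares (in the ``easy'' and ``hard'' directions) are bounded away from $0$ and $1$ uniformly in the scale via a box-crossing/RSW bootstrap, and then (ii) glue crossings of squares and annuli to obtain the crossing of an arbitrary quad $\calQ$, and finally (iii) transfer the statement from the excursion set $\mathcal{D}_0$ to the nodal set $\mathcal{N}_0$. I will carry this out as follows.

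\textbf{Step 1: A priori bounds and RSW for rectangles.} First I would invoke Tassion's method, as adapted to continuous dependent percolation: the symmetry assumption Condition~\ref{a.std} gives that the crossing probability of a square in either direction is exactly $1/2$ at $p=0$ by duality (this is the point of working at level $0$; see Remark~\ref{r.planar_duality}). The goal is then to propagate crossings from squares to rectangles of fixed aspect ratio $\rho$, uniformly in $s$. Tassion's construction builds a ``crossing point'' $x_s$ on the side of a square and shows, using positive association (the Gaussian field is positively correlated, hence FKG for increasing events such as crossings of $\mathcal{D}_0$) together with a gluing argument, that if square-crossings are bounded below then so are $(\rho,1)$-rectangle-crossings. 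Positive correlation of $\kappa$ is exactly what makes the FKG inequality available here. The only genuinely new ingredient replacing the ``quantitative discretization'' of~\cite{bg_16,bm_17} is the quasi-independence result: where the classical argument needs independence between the field restricted to well-separated regions, we instead pay a small multiplicative/additive error controlled by the decay $\kappa(x)\le C|x|^{-\alpha}$ with $\alpha>4$. This is where the exponent $4$ enters: the quasi-independence error over two regions at distance $r$ and of diameter $O(r)$ must be summable/negligible at the relevant scales, and $\alpha > 4$ is precisely the threshold making the error terms in the gluing step go to $0$ (or stay uniformly small) as $s\to\infty$.

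\textbf{Step 2: From rectangles to an arbitrary quad.} Once crossings of rectangles of a fixed large aspect ratio and of annuli (combining four long rectangles around a square, again via FKG) are bounded below uniformly in $s$, a standard combinatorial argument covers any fixed quad $\calQ$ by a bounded chain of overlapping rectangles and circuits whose union forces a crossing of $s\calQ$ between its distinguished sides; intersecting finitely many increasing events and applying FKG once more yields the constant $c=c(\kappa,\calQ)>0$ valid for every $s\in\,]0,+\infty[$. Here the continuity of $f$ ensures the relevant events are measurable and that a connected path in $\mathcal{D}_0$ exists whenever the combinatorial crossing condition holds.

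\textbf{Step 3: Transfer to the nodal set.} For the last sentence, I would note that a crossing of $s\calQ$ by $\mathcal{N}_0=f^{-1}(0)$ is implied by a two-arm type event for $\mathcal{D}_0$ and $\mathcal{D}_0^c=f^{-1}(]-\infty,0])$: a crossing of the excursion set accompanied by a crossing of the complementary excursion set in a slightly fattened quad, whose common boundary must contain a nodal crossing. By Step 2 applied to both $\mathcal{D}_0$ and (by the symmetry $f\leftrightarrow -f$ at level $0$) to $\mathcal{D}_0^c$, together with quasi-independence to decouple the two on appropriately chosen sub-quads, one gets that for $s$ large enough the nodal crossing probability is bounded below. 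The restriction $s\ge s_0$ appears because this decoupling argument only becomes effective once the geometry is large enough for the $\alpha>4$ error bounds to be small.

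\textbf{Main obstacle.} The hard part will be Step 1: implementing Tassion's RSW bootstrap in the continuum with a \emph{quasi}-independence substitute rather than true independence, i.e. carefully tracking the error terms produced by the quasi-independence theorem through each gluing and making sure the induction on scales closes under the hypothesis $\alpha>4$ rather than requiring a larger exponent. Controlling these errors uniformly in $s$ — in particular in the monotone gluing of rectangles into annuli and in the ``pivotal''-type constructions inside Tassion's argument — is the crux, and it is precisely where the new approach departs from~\cite{bg_16,bm_17}.
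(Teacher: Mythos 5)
Your proposal follows essentially the same route as the paper: first obtain an RSW estimate for rectangles via Tassion's method, with the quasi-independence theorem supplying the single decoupling step that Tassion's argument requires (this is the paper's Proposition 5.1), and then deduce the quad and nodal statements by gluing crossings and by combining a primal and a dual crossing event via one more application of quasi-independence. The paper orders the final deductions slightly differently --- it proves the $\mathcal{N}_0$ statement directly (intersecting a well-separated family of crossings above $0$ with a family of crossings below $0$ and applying Theorem~\ref{t.qi_rect} once) and then derives the $\mathcal{D}_0$ statement from it, handling small $s$ by the elementary remark that $f\geq 1$ on a bounded box with positive probability rather than by the bootstrap, which is only effective for large $s$ --- but the ideas and the role of $\alpha>4$ match yours.
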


Lowering the exponent $\alpha$ below $4$, if at all possible, would require new ideas (see Remark~\ref{r.example}). This result is the analog of the Russo-Seymour-Welsh theorem for planar percolation from \cite{russo1978note,seymour1978percolation}, see also Lemma~4 of Chapter~3 of~\cite{bollobas2006percolation}, Theorem~11.70 and Equation~11.72 of~\cite{grimmett1999percolation} or Theorem~5.31 of~\cite{grimmett2010probability}. For more about the links between connectivity properties of nodal lines and domains and percolation, see~\cite{molchanov1983percolationi,molchanov1983percolationii,molchanov1986percolationiii}, \cite{alex_96}, \cite{bs_07}, \cite{bg_16}, \cite{bm_17}, \cite{bmw_17}, \cite{rv_bf}. Box-crossing estimates have previously been extended to some other dependent models, see~\cite{bollobas2006critical,duminil2011connection,tassion2014crossing, ahlberg2016sharpness} and also to some non-planar models, see~\cite{basu2015crossing,newman2017critical}. It seems also relevant to mention the recent work~\cite{bg_17}, in which the authors prove that the box-crossing property is stable by perturbations for sufficiently decorrelated discrete Gaussian fields. In particular, they obtain analogs of Theorem~\ref{t.RSW} for many discrete Gaussian fields that are not positively associated.\\

The result analogous to Theorem~\ref{t.RSW} in~\cite{bg_16} is Theorem~4.9. In~\cite{bm_17}, this is Theorem~1.7. Note that our assumptions about the differentiability and the non-degeneracy of $\kappa$ are different from those in~\cite{bg_16} and~\cite{bm_17}. Still, we see them essentially as technical conditions, whereas the question of the optimal exponent $\alpha$ seems to be of much more interest.\\

While our proof differs from the one in~\cite{bg_16,bm_17} in some key steps, the initial idea is the same, i.e. the use of Tassion's general method to prove box-crossing estimates which goes back to~\cite{tassion2014crossing}. Let us first be a little more precise about the proof in~\cite{bg_16,bm_17}. The three main ingredients are: i) a quantitative version of Tassion's method (see Section~2 of~\cite{bg_16}, ii) a quasi-independence result for finite dimensional Gaussian fields (see Theorem~4.3 of~\cite{bg_16} and Proposition~C.1 of~\cite{bm_17}) and iii) a quantitative approximation result (see Theorem~1.5 of~\cite{bg_16} and Theorems~1.3 and~1.5 of~\cite{bm_17}). Steps i) and ii) imply a discrete version of a RSW theorem and Step iii) is then used to deduce a RSW theorem for the continuous model. The most important contribution of~\cite{bm_17} is an improvement of the approximation result. Another way to prove the box-crossing property is to use prove a quasi-independence in the continuum and then apply Tassion's method (not necessarily in a quantitative way). This strategy was also suggested in \cite{bmw_17}, where Beliaev, Muirhead and Wigman prove a box-crossing estimate for random Gaussian fields on the sphere and the torus. More precisely, they used analogs of steps ii) and iii) above to prove such a quasi-independence result, see their Proposition~3.4. In the present work, we also prove a quasi-independence result in the continuum (see Theorem~\ref{t.qi_rect}) and then apply Tassion's method. However, the way we prove such a quasi-independence result is very different from~\cite{bmw_17}. In particular, \textit{we do not rely on any quantitative approximation result} and we rather prove a quasi-independence result \textit{uniform in the discretization mesh} (see Proposition~\ref{p.eps_qi}). Moreover, our techniques, together with the quantitative adaptation of~\cite{tassion2014crossing} presented in~\cite{bg_16} yield a uniform discrete RSW-estimate without any constraints on the mesh (see Proposition~\ref{p.eps_rsw}). This result is quite handy when using discrete techniques to study continuous fields, see for instance~\cite{rv_bf}. The proof of Theorem~\ref{t.RSW} is written in Section~\ref{s.Tassion} by relying only on our Sections~\ref{s.qi.vector} and~\ref{s.weak-qi} (but not on Subsection~\ref{ss.components}) and on~\cite{tassion2014crossing}. For other works relying on Tassion's method for box crossing estimates, see~\cite{ahlberg2016sharpness,duminil2016box}.\\

Before stating our quasi-independence results, let us state our result regarding the concentration of the number of nodal components of planar Gaussian fields.

\paragraph{A concentration from below around the Nazarov and Sodin constant for the number of nodal components.}

In~\cite{naso_nod}, Nazarov and Sodin prove that, if $g$ is a random spherical harmonic of degree $n$ on the $2$-dimensional sphere and if $N_0(n)$ is the number of nodal components (i.e. connected components of the $0$-level set) of $g$, then there exists a constant $c_{NS} \in ]0,+\infty[$ such that, for every $\eps > 0$, there exists $C=C(\eps)<+\infty$ and $c=c(\eps)>0$ such that for every $n \in \N$:
\begin{equation}\label{e.concentration_harmo}
\Pro \left[ \left| \frac{N_0(n)}{n^2}-c_{NS} \right| \geq \eps \right]  \leq C \exp(-c n ) \, .
\end{equation}
In other words, the number of nodal components divided by $n^2$ concentrates exponentially around a constant. In~\cite{nazarov2015asymptotic}, the same authors consider a much larger family of fields and obtain the much more general following result but without concentration.
\begin{thm}[Theorem~1 of~\cite{nazarov2015asymptotic}]\label{t.NS}
Assume that $f$ is a normalized, continuous, stationary planar Gaussian field which satisfies the spectral hypotheses Condition~\ref{a.spectral} below. Then, there exists a constant $c_{NS}=c_{NS}(\kappa) \in ]0,+\infty[$ such that, if $N_0(s)$ is the number of connected components of the nodal set $\calN_0$ contained in the box $[-s/2,s/2]^2$, then $N_0(s)/s^2$ goes to $c_{NS}$ as $s$ goes to $+\infty$ a.s. and in $L^1$.
\end{thm}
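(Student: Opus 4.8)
The plan is to follow an ergodic-theoretic scheme: first prove convergence of the expectations $\E[N_0(s)]/s^2$, then upgrade it to almost sure and $L^1$ convergence via a multiparameter pointwise ergodic theorem, the bridge between the two being an \emph{integral-geometric sandwich} comparing the (non-additive) functional $N_0$ with spatial averages of a localised version of itself. Throughout, for a continuous $g:\R^2\to\R$ and a bounded open set $U\subset\R^2$, let $N(g,U)$ denote the number of connected components of $g^{-1}(0)$ contained in $U$, so that $N_0(s)=N(f,(-s/2,s/2)^2)$, and write $Q_r(x)=x+(-r/2,r/2)^2$. The first task is the a priori bound $\E[N(f,U)]\leq C(\kappa)\,\area(U)$, together with $\E[N(f,U)^2]\leq C(\kappa)\,\area(U)^2$: almost surely $f|_U$ is Morse with $0$ a regular value, and then (a standard combinatorial fact for nodal sets of Morse functions on simply connected domains) $N(f,U)$ is at most the number of critical points of $f$ in $U$; the two bounds follow from the Kac--Rice formula for the first and second factorial moments of the critical-point count, licit under the regularity contained in Condition~\ref{a.spectral}.

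The geometric core is the sandwich inequality: for $0<r<s$,
\[
\frac{1}{r^{2}}\int_{Q_{s-r}(0)}N\bigl(f,Q_r(x)\bigr)\,dx-\mathrm{Err}(r,s)\ \leq\ N_0(s)\ \leq\ \frac{1}{r^{2}}\int_{Q_{s+r}(0)}N\bigl(f,Q_r(x)\bigr)\,dx+\mathrm{Err}(r,s),
\]
where $\mathrm{Err}(r,s)$ collects the contributions of nodal components that meet the boundary zone of the boxes involved and of components whose diameter is not negligible compared with $r$. The elementary identity behind this is $\int_{\R^2}\un[\gamma\subset Q_r(x)]\,dx=\area\{x:\gamma\subset Q_r(x)\}$, which equals $(1+\petito{1})\,r^2$ for a component $\gamma$ of diameter $\petito{r}$, so dividing the whole integral by $r^2$ recovers the count of small components; the error term is bounded, through the critical-point estimate of the previous paragraph, by $\grandO{rs}$, hence by $\petito{s^2}$ for fixed $r$.

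Taking expectations in the left inequality and using stationarity of $f$ gives $\E[N_0(s)]\geq(1-\petito{1})\,\tfrac{s^2}{r^2}\,\E\bigl[N(f,Q_r(0))\bigr]$, hence $\liminf_{s\to\infty}\E[N_0(s)]/s^2\geq\E[N(f,Q_r(0))]/r^2$ for every fixed $r$; the right inequality furnishes the matching bound on the $\limsup$ after letting $r\to\infty$. Thus $\E[N_0(s)]/s^2\to c_{NS}:=\sup_{r>0}\E[N(f,Q_r(0))]/r^2$, which is finite by the a priori bound. Strict positivity $c_{NS}>0$ is a separate matter: one shows that, with positive probability, $f$ possesses a small nodal loop inside a fixed box, by approximating $f$ in $C^1$-norm on a small disc by a deterministic function exhibiting a sign change around a circle — possible because the spectral hypotheses of Condition~\ref{a.spectral} make $f$ non-degenerate enough, so that the law of $f$ charges a $C^1$-neighbourhood of such a model function.

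For the almost sure statement, Condition~\ref{a.spectral} forces the spectral measure of $f$ to be non-atomic, so by the classical criterion of Maruyama, Fomin and Grenander the $\R^2$-translation action underlying $f$ is ergodic. Fixing $r$ and applying Wiener's multiparameter pointwise ergodic theorem to the integrable functional $g\mapsto N(g,Q_r(0))/r^2$, the spatial averages $\tfrac{1}{s^2}\int_{Q_s(0)}N(f,Q_r(x))/r^2\,dx$ converge, almost surely and in $L^1$, to $\E[N(f,Q_r(0))]/r^2$; inserting this into both sides of the sandwich shows that, almost surely, $\limsup_s N_0(s)/s^2$ and $\liminf_s N_0(s)/s^2$ each lie within an error going to $0$ as $r\to\infty$ of $\E[N(f,Q_r(0))]/r^2$, and with the convergence of the means this forces $N_0(s)/s^2\to c_{NS}$ almost surely. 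The second-moment bound $\E[N_0(s)^2]\leq C(\kappa)\,s^4$ of the first paragraph makes $\{N_0(s)/s^2\}_s$ uniformly integrable, upgrading the convergence to $L^1$. The main obstacle is the sandwich itself — proving that the boundary-straddling and large-diameter components contribute only $\petito{s^2}$ uniformly, which is exactly where the Kac--Rice bound on the density of critical points (and hence the smoothness in Condition~\ref{a.spectral}) is indispensable; a secondary, model-specific difficulty is the strict positivity $c_{NS}>0$, which genuinely uses the non-degeneracy of $f$ rather than soft arguments.
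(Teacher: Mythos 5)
This theorem is not proved in the paper at all: it is imported verbatim from Nazarov and Sodin (Theorem~1 of~\cite{nazarov2015asymptotic}), so there is no ``paper's own proof'' to compare you against. What you have written is a recognisable and essentially faithful summary of the Nazarov--Sodin argument itself: the first-moment Kac--Rice bound on critical points, the integral-geometric sandwich between $N_0(s)$ and spatial averages of the localised count $N(f,Q_r(\cdot))$, the ergodicity of the translation action via the no-atoms condition and the Fomin--Grenander--Maruyama criterion, Wiener's multiparameter ergodic theorem, and the separate lower-bound argument giving $c_{NS}>0$ via Condition~\ref{a.spectral}(iv). All of this is the right scheme and is correctly sequenced.

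One step deserves a flag. You invoke a second factorial-moment bound $\E[N(f,U)^2]\le C\,\area(U)^2$ (via Kac--Rice for pairs of critical points) to get uniform integrability of $N_0(s)/s^2$ and hence $L^1$ convergence. This is heavier than what Condition~\ref{a.spectral} visibly guarantees: the hypothesis $\int|\lambda|^4\rho(d\lambda)<\infty$ gives enough smoothness for the \emph{first}-moment Kac--Rice count, but a two-point Kac--Rice for critical points requires controlling the joint law of $(\nabla f(x),\nabla f(y),\nabla^2 f(x),\nabla^2 f(y))$ near the diagonal, which is not obviously non-degenerate or integrable under these assumptions alone. Nazarov and Sodin avoid this: in their argument the sandwich error has an $O(rs)$ bound \emph{in $L^1$-norm} already from the first-moment estimate, and Wiener's theorem delivers $L^1$ convergence of the localised spatial averages directly, so $L^1$ convergence of $N_0(s)/s^2$ drops out without any second-moment input. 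If you restate your last paragraph along those lines, the sketch is fully aligned with the cited source and avoids claiming an estimate the stated hypotheses may not give you.
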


\begin{remark}
Their result is actually more general: they obtain a result for families of Gaussian fields on manifolds with translation-invariant local limits (see Subsection 1.2 of~\cite{nazarov2015asymptotic}).
\end{remark}

Theorem~\ref{t.NS} and the quasi-independence results of the present paper enable us to obtain a concentration result \textbf{from below} of $N_0(s)/s^2$ around $c_{NS}$:

\begin{thm}\label{t.concentration_NS}
Assume that $f$ is a normalized, continuous, stationary and non-degenerate planar Gaussian field which satisfies the spectral hypotheses Condition~\ref{a.spectral} below and the differentiability assumption Condition~\ref{a.decay2} below.With the same notations as Theorem~\ref{t.NS}, we have the following:
\bi 
\item[1.] if there exists $C<+\infty$ and $c>0$ such that for every $x \in \R^2$ we have $|\kappa(x)| \leq C \exp(-c|x|^2)$, then for every $\eps>0$ there exists $C_0=C_0(\kappa,\eps)<+\infty$ and $c_0=c_0(\kappa,\eps)$ such that for each $s\in\, ]0,+\infty[$:
\[
\Pro \left[ \frac{N_0(s)}{s^2} \leq c_{NS}-\eps \right] \leq C_0 \exp(-c_0 s) \, ;
\]
\item[2.] if there exists $C<+\infty$ and $\alpha > 4$ such that for every $x \in \R^2$ we have $|\kappa(x)| \leq C |x|^{-\alpha}$, then for every $\delta > 0$ and every $\eps > 0$, there exists $C_0=C_0(\kappa,\alpha,\delta,\eps)<+\infty$ such that for each $s\in\, ]0,+\infty[$:
\[
\Pro \left[ \frac{N_0(s)}{s^2}\leq c_{NS}-\eps \right] \leq C_0 s^{4-\alpha+\delta} \, .
\]
\ei
\end{thm}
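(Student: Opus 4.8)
The plan is to reduce the statement, via a tiling argument, to a concentration estimate for a sum of weakly dependent indicators, and then to control the dependence using the quasi-independence estimate for events counting nodal components from Subsection~\ref{ss.components}. We may assume that $s$ is large (for $s$ in a compact subset of $]0,+\infty[$ both inequalities hold by taking $C_0$ large, since $\Pro\le 1$, and in case~2 the bound tends to $+\infty$ as $s\to 0$) and that $0<\eps<c_{NS}$. Fix a large parameter $t$ (to be tuned). Tile $Q_s:=[-s/2,s/2]^2$ by $M$ pairwise disjoint open sub-boxes $B_1,\dots,B_M$ of side $t$, discarding a boundary strip, so that $M\ge(1-o(1))(s/t)^2$ as $s\to\infty$; let $X_i$ be the number of connected components of $\calN_0$ contained in $B_i$. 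Each such component is a component of $\calN_0$ contained in $Q_s$, and, the $B_i$ being disjoint, it is counted once, so $N_0(s)\ge\sum_{i=1}^M X_i$. Since $X_i\distr N_0(t)$ by stationarity and $N_0(t)/t^2\to c_{NS}$ in probability (which follows from the $L^1$ convergence in Theorem~\ref{t.NS}), for any $\beta>0$ we may fix $t=t(\kappa,\eps,\beta)$ such that $\Pro[G_i]\ge 1-\beta$, where $G_i:=\{X_i\ge(c_{NS}-\tfrac{\eps}{2})t^2\}$.

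On $\{N_0(s)\le(c_{NS}-\eps)s^2\}$ one has $(c_{NS}-\tfrac{\eps}{2})t^2\,\#\{i:G_i\}\le(c_{NS}-\eps)s^2$, which, combined with $M\ge(1-o(1))(s/t)^2$, forces (for $s$ large) $\#\{i:G_i^c\}\ge\theta M$ with $\theta:=\tfrac12(1-\tfrac{c_{NS}-\eps}{c_{NS}-\eps/2})>0$. We next colour $\{1,\dots,M\}$ with $k^2$ colours by a grid colouring so that sub-boxes of the same colour lie at pairwise distance at least $D:=(k-1)t$, the integer $k=k(s)$ being chosen later. Then $\{\#\{i:G_i^c\}\ge\theta M\}$ forces $\#\{i\in\mathcal{C}:G_i^c\}\ge\theta|\mathcal{C}|$ for some colour class $\mathcal{C}$, with $|\mathcal{C}|$ of order $(s/D)^2$, so
\[
\Pro\bigl[N_0(s)\le(c_{NS}-\eps)s^2\bigr]\ \le\ k^2\,\max_{\mathcal{C}}\,\Pro\Bigl[\textstyle\sum_{i\in\mathcal{C}}\un[G_i^c]\ge\theta|\mathcal{C}|\Bigr].
\]
For a fixed $\mathcal{C}$, the events $G_i^c$ ($i\in\mathcal{C}$) depend on $f$ only through its restrictions to the closures $\overline{B_i}$, which lie at pairwise distance $\ge D$; being events counting nodal components inside a box, the quasi-independence estimate of Subsection~\ref{ss.components} lets us compare the law of $(\un[G_i^c])_{i\in\mathcal{C}}$ with the product of its marginals, with an error $\mathrm{Err}_\kappa(D,|\mathcal{C}|)$ governed by the decay of $\kappa$ at distance $D$. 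Inserting this comparison into a Chernoff bound for $\E[\exp(\lambda\sum_{i\in\mathcal{C}}\un[G_i^c])]$ ($\lambda>0$ small and fixed) and using $\Pro[G_i^c]\le\beta<\theta$ (valid for $t$ large), one gets $\Pro[\sum_{i\in\mathcal{C}}\un[G_i^c]\ge\theta|\mathcal{C}|]\le e^{-c_1|\mathcal{C}|}+\mathrm{Err}_\kappa(D,|\mathcal{C}|)$ for some $c_1=c_1(\kappa,\eps)>0$.

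It then remains to choose $k$, equivalently $D$, to optimise the right-hand side; this is where the decay of $\kappa$ enters. In case~1, since $|\kappa(x)|\le Ce^{-c|x|^2}$, the error $\mathrm{Err}_\kappa(D,|\mathcal{C}|)$ is superpolynomially small in $D$ (fast enough — roughly at rate $e^{-cD^2}$ — that $D$ of order $s^{1/2}$ makes it at most $e^{-c_0 s}$), while $|\mathcal{C}|$ is then of order $s$, so $e^{-c_1|\mathcal{C}|}$ is also at most $e^{-c_0 s}$; absorbing the polynomial prefactor $k^2$ yields $\Pro[N_0(s)\le(c_{NS}-\eps)s^2]\le C_0e^{-c_0 s}$. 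In case~2, $\mathrm{Err}_\kappa(D,|\mathcal{C}|)$ decays only polynomially in $D$, at a rate fixed by the exponent $\alpha$ in $|\kappa(x)|\le C|x|^{-\alpha}$ (a power of $D^{-\alpha}$ summed over pairs of sub-boxes of $\mathcal{C}$); choosing $D$ of order $s^{\gamma}$ with $\gamma=\gamma(\alpha,\delta)\in(0,1)$ so that $k^2\,\mathrm{Err}_\kappa(D,|\mathcal{C}|)$ is of order $s^{4-\alpha+\delta}$, and noting that then $|\mathcal{C}|$ is still a positive power of $s$ so that $k^2e^{-c_1|\mathcal{C}|}$ is negligible, gives $\Pro[N_0(s)\le(c_{NS}-\eps)s^2]\le C_0 s^{4-\alpha+\delta}$.

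The crux of the argument — and the only genuinely delicate point — is the quasi-independence input. Events counting nodal components are neither increasing nor decreasing and are unstable under arbitrarily small perturbations of $f$ (a tiny perturbation may create or destroy many small components), so they cannot be fed directly into the crossing-event result Theorem~\ref{t.qi_rect}. The fix, carried out in Subsection~\ref{ss.components}, is to count instead components of $\calN_0$ that persist under perturbations of $f$ of some small amplitude $\ell$: the indicator events of such ``stable'' counts do admit a good quasi-independence estimate, and the loss in passing from $N_0(t)$ to the stable count is controlled uniformly in small $\ell$, again via Theorem~\ref{t.NS}. The precise shape of $\mathrm{Err}_\kappa(D,|\mathcal{C}|)$ produced by that analysis is exactly what fixes the exponents $e^{-c_0 s}$ and $s^{4-\alpha+\delta}$ above; the remaining points (boundary effects in the tiling, the requirement $\beta<\theta$) are routine and harmless for large $s$.
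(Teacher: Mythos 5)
Your overall strategy — tile $[-s/2,s/2]^2$ by mesoscopic boxes, use Theorem~\ref{t.NS} to make the per-box failure probability small, observe that a shortfall in $N_0(s)$ forces a positive density of failing boxes, extract a well-separated subfamily, and close with the quasi-independence estimate plus an optimisation in the separation scale — is exactly the paper's approach. The differences are cosmetic: the paper lets the box side $r$ do double duty (both the Nazarov--Sodin scale and the separation scale) and extracts a well-separated subfamily directly, applying Theorem~\ref{t.qi_rect} iteratively and then union-bounding over the $\binom{(s/r)^2}{\eta(s/r)^2}\le 2^{(s/r)^2}$ choices of subfamily; you decouple the two scales ($t$ fixed for the Nazarov--Sodin step, $D=(k-1)t$ tunable for the separation) and use a grid colouring rather than the ``pick $1/8$'' pigeonhole. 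Both variants optimise in the same way and land on the same exponents.

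Two points deserve a warning. First, the Chernoff step is not as innocuous as the sentence ``Inserting this comparison into a Chernoff bound ... one gets $\le e^{-c_1|\mathcal{C}|}+\mathrm{Err}_\kappa$'' suggests. If one expands $\E\bigl[\exp(\lambda\sum_{i\in\mathcal{C}}\un[G_i^c])\bigr]=\sum_{S\subseteq\mathcal{C}}(e^\lambda-1)^{|S|}\Pro[\cap_{i\in S}G_i^c]$ and inserts the quasi-independence bound $\Pro[\cap_{i\in S}G_i^c]\le\beta^{|S|}+\mathrm{Err}$, the error is multiplied by $\sum_S(e^\lambda-1)^{|S|}=e^{\lambda|\mathcal{C}|}$, and after applying $e^{-\lambda\theta|\mathcal{C}|}$ one is left with a factor $e^{\lambda(1-\theta)|\mathcal{C}|}$ on the error; untangling the resulting constraints between $a,\lambda,\theta$ is tight and can fail for large $\alpha$. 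It is cleaner to do exactly what the paper does: union bound over the $\binom{|\mathcal{C}|}{\theta|\mathcal{C}|}$ subsets of size $\theta|\mathcal{C}|$ and bound each $\Pro[\cap_{i\in S}G_i^c]$ by $\beta^{\theta|\mathcal{C}|}+\frac{1}{1-\beta}\mathrm{Err}_1$ via an iterated application of Theorem~\ref{t.qi_rect}, so that the error term only picks up the binomial factor (controlled by $2^{|\mathcal{C}|}$), not $e^{\lambda|\mathcal{C}|}$. Second, your description of ``the crux'' is not how the paper's Subsection~\ref{ss.components} works: Theorem~\ref{t.qi_rect} as stated already covers events generated by the component counts $N_p(i)$ (that extension is part of the theorem, not an afterthought), and the mechanism behind it is a discretisation at mesh $\eps$ (Lemmas~\ref{l.components.1}--\ref{l.components.2}, combined with the pivotality analysis of Lemma~\ref{l.components.piv_to_geometry} and Kac--Rice), not a perturbative ``stable count'' argument with an amplitude parameter $\ell$. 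This is only a misdescription, not a gap in your argument, since you only invoke the output of Subsection~\ref{ss.components}, but the paragraph as written misrepresents the paper's proof technique.
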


An important example of a Gaussian field which satisfies the decorrelation hypothesis of Item~1 above is the \textbf{Bargmann-Fock field} which is the analytic Gaussian field $\, : \, \R^2 \rightarrow \R$ with covariance function $(x,y) \in (\R^2)^2 \mapsto \kappa(x-y)=\exp \left( -\frac{1}{2}|x-y|^2 \right)$. In some sense, this field is the local limit of the \textbf{Kostlan polynomials }which are random homogeneous polynomials on the sphere which arise naturally from real algebraic geometry, see for instance the introduction of~\cite{bg_16} or that of~\cite{bmw_17}. The analogue of Theorem~\ref{t.NS} is known for these polynomials (see~\cite{nazarov2015asymptotic}), but the concentration inequality~\eqref{e.concentration_harmo} is not known (neither from below nor from above). There are however two relevant results in this direction. The first, Corollary 1.10 of~\cite{let_16}, proves that the probability that there are no components in a prescribed region decays polynomially fast. The second, Theorem 1 of~\cite{gw_10}, deals with the other extreme and proves that polynomials of degree $d>>1$ whose number of nodal components is maximal up to a linear term in $d$ are exponentially rare in $d$. We hope that the proof of Theorem~\ref{t.concentration_NS} can be adapted in order to get the lower concentration part of~\eqref{e.concentration_harmo} with $n=\sqrt{d}$ for Kostlan polynomials of degree $d>>1$.

\begin{remark}
In~\cite{nazarov2015asymptotic}, the authors obtain Theorem~\ref{t.NS} in any dimension. We believe that our techniques could be extended to higher dimensions (probably with additional technicalities).
\end{remark}

\begin{remark}
As explained in the paragraph above about RSW results and as suggested in~\cite{bmw_17}, another way of obtaining quasi-independence results for nodal lines of planar Gaussian fields is to use the quasi-independence results for finite dimensional vectors and the quantitative discretization results, both from~\cite{bg_16,bm_17}. One could probably deduce Theorem~\ref{t.concentration_NS} from either~\cite{bg_16} or~\cite{bm_17}, though with slightly different assumptions, and more to the point, with a weaker Item~2 (more precisely, we believe that the exponent in the right hand side would be $16-\alpha+\delta$ instead.
\end{remark}

Before stating our quasi-independence results, we list the conditions on the Gaussian fields under which we work in this article.

\paragraph{Conditions on the planar Gaussian fields.}

We will assume that Condition~\ref{a.super-std} is true  in all the present paper. Then, Condition~\ref{a.std} will be useful to apply classical percolation arguments, Conditions~\ref{a.pol_decay} and~\ref{a.decay2} will be useful to obtain quasi-independence results, and finally Conditon~\ref{a.spectral} is the assumptions by Nazarov and Sodin to obtain their convergence result.

\begin{condition}\label{a.super-std}
The field $f$ is non-degenerate (i.e. for any pairwise distinct $x_1,\dots,x_k\in\R^2$, $(f(x_1),\cdots,f(x_k))$ is non-degenerate), centered, normalized, continuous, and stationary. In particular, there exists a strictly positive definite continuous function $\kappa \, : \, \R^2 \rightarrow [-1,1]$ such that $K(x,y) := \E \left[ f(x) f(y) \right] = \kappa(y-x)$ and $\kappa(0)=1$.
\end{condition}

\begin{condition}[Useful to apply percolation arguments.]\label{a.std}
The field $f$ is positively correlated, invariant by $\frac{\pi}{2}$-rotation, and reflection through the horizontal axis.
\end{condition}

\begin{condition}[Useful to have quasi-independence. Depends on a parameter $\alpha>0$.]\label{a.pol_decay}
There exists $C<+\infty$ such that for each $x\in\R^2$, $|\kappa(x)|\leq C|x|^{-\alpha}$.
\end{condition}

\begin{condition}[Technical conditions to have quasi-independence.]\label{a.decay2} The function $\kappa$ is $C^8$ and for each $\beta\in\N^2$ with $\beta_1+\beta_2\leq 2$, $\lim_{x\rightarrow \infty}\partial^\beta\kappa(x)=0$.
\end{condition}

\begin{condition}[Condition from~\cite{nazarov2015asymptotic}]\label{a.spectral}
Let $\rho$ be the spectral measure of $f$ which exists by Bochner's theorem (see~\cite{nazarov2015asymptotic}). Then: i) $\int_{\R^2} |\lambda|^4 \rho(d\lambda) < +\infty$, ii) $\rho$ has no atom, iii) $\rho$ is not supported on a linear hyperplane and iv) there exists a compactly supported signed measure $\mu$ whose support is included in the support of $\rho$ and a bounded domain $D \subseteq \R^2$ such that $\mathcal{F}(\mu)$ (the Fourier transform of $\mu$) restricted to $\partial D$ is non-positive and there exists $u_0 \in D$ such that $\mathcal{F}(\mu)(u_0)>0$.
\end{condition}

Note that, in the case of the Bargmann-Fock field, the spectral measure is simply a standard Gaussian measure, so this field satisfies Condition~\ref{a.spectral} (for the case iv), see Appendix~C of~\cite{nazarov2015asymptotic}). Moreover, $f$ is not degenerate since the Fourier transform of a continuous and integrable function $: \, \R^2 \rightarrow \R_+$ which
is not $0$ is strictly positive definite, see for instance Theorem~3 of Chapter~13 of~\cite{cheney2009course} (which is the strictly positive definite version of the easy part of Bochner theorem). Finally, the Bargmann-Fock field satisfies all the conditions above (and for every $\alpha > 0$).

\paragraph{The quasi-independence result.} Theorem~\ref{t.qi_rect} below is our quasi-independence result for level lines of planar Gaussian fields. We first need a few more notations. Consider the following setup: let $k_1,k_2 \in \Z_{>0}$ and let $(\calE_i)_{1\leq i\leq k_1+k_2}$ be a collections of either rectangles of the from $[a,b] \times [c,d]$ for some $a \leq b$ and $c \leq d$ or annuli of the form $x+[-a,a]^2 \setminus ]-b,b[^2$ for some $x \in \R^2$ and $a \geq b$. We say that a rectangle is crossed from left to right above (resp. below) $-p$ if there is a continuous path in $\calD_p$ (resp. $\calD_p^c$) included in this rectangle that joins its left side to its right side. Of course, an analogous definition holds for top-bottom crossings. Moreover, we say that there is a circuit above (resp. below) $-p$ in an annulus if there is circuit included in $\calD_p$ (resp. $\calD_p^c$) included in this annulus that separates its inner boundary from its outer boundary. Furthermore, for each $i \in \{ 1, \cdots, k_1+k_2 \}$, we let $N_p(i)$ denote the number of connected components of the level set $\calN_p$ which are included in $\calE_i$. We write $\calK_1=\cup_{i=1}^{k_1}\calE_i$, $\calC_1=\cup_{i=1}^{k_2}\partial\calE_i$, $\calK_2=\cup_{j=k_1+1}^{k_1+k_2}\calE_j$, and $\calC_2=\cup_{j=k_1+1}^{k_1+k_2}\partial\calE_j$.
\begin{thm}\label{t.qi_rect}
Let $f$ be a Gaussian field satisfying Conditions~\ref{a.super-std} and \ref{a.decay2} and consider the above setup. There exist $d=d(\kappa)<+\infty$ and $C=C(\kappa)<+\infty$ such that we have the following: let $p\in\R$. Let $A$ (resp. $B$) be an event in the $\sigma$-algebra generated by the crossings above $-p$ and below $-p$ of rectangles among the $(\calE_i)_{1\leq i\leq k_1}$ (resp. $(\calE_j)_{k_1+1\leq j\leq k_1+k_2}$), the circuits above $-p$ and below $-p$ in annuli among the $(\calE_i)_{1\leq i\leq k_1}$ (resp. $(\calE_j)_{k_1+1\leq j\leq k_1+k_2}$) and the variables $N_p(i)$ for $i \in \{1, \cdots,k_1 \}$ (resp. $i \in \{k_1+1, \cdots,k_1+k_2\}$). Let $\eta=\sup_{x\in\calK_1,y\in\calK_2}|\kappa(x-y)|$. If $\calK_1$ and $\calK_2$ are at distance greater than $d$, then:
\[
\left|\prob[A\cap B]-\prob[A]\prob[B]\right|\\
\leq\frac{C \,\eta}{\sqrt{1-\eta^2}}(1+|p|)^4 \, e^{-p^2} \prod_{i=1}^2\left(\ar(\calK_i)+\leng(\calC_i)+k_i\right)\, .
\]
\end{thm}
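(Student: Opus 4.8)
The plan is to reduce the quasi-independence statement to a comparison between the law of the pair $(f|_{\calK_1}, f|_{\calK_2})$ and the law of an independent pair with the same marginals. The crucial observation is that $A$ and $B$ each depend only on the restriction of $f$ to $\calK_1$ and $\calK_2$ respectively — more precisely, on crossing/circuit events and component counts, all of which are measurable with respect to $f$ restricted to a neighbourhood of $\calK_1$ (resp.\ $\calK_2$), but since crossings of $\calD_p$ and of $\calD_p^c$ inside a rectangle, and circuits in an annulus, are determined by $f$ on that closed set, we may work directly with $f|_{\calK_1}$ and $f|_{\calK_2}$. Write $g$ for the Gaussian field that agrees in law with $f$ on $\calK_1$ and on $\calK_2$ separately but for which $g|_{\calK_1}$ and $g|_{\calK_2}$ are independent. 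Then $\prob[A]\prob[B] = \prob[g \in (A\times B)]$ and $\prob[A\cap B] = \prob[f \in (A\times B)]$, so it suffices to bound the total variation distance between $f$ and $g$ restricted to $\calK_1 \cup \calK_2$ — or, to get the clean product bound on the right-hand side, to bound the relevant Radon–Nikodym-type discrepancy carefully.

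The first step I would carry out is a finite-dimensional approximation: discretize $\calK_1$ and $\calK_2$ on a mesh of size $\eps$ and consider the Gaussian vectors $X_\eps = (f(x))_{x \in \calK_1 \cap \eps\Z^2}$ and $Y_\eps = (f(y))_{y \in \calK_2 \cap \eps\Z^2}$. For the joint vector $(X_\eps, Y_\eps)$ versus the decoupled vector $(X_\eps, Y_\eps')$ with $Y_\eps'$ an independent copy, one compares densities via the standard interpolation $\Sigma_t = (1-t)\Sigma_{\mathrm{indep}} + t\Sigma_{\mathrm{joint}}$, $t \in [0,1]$, and differentiates $\log$ of the Gaussian density along $t$. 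The derivative is controlled by the off-diagonal block of the covariance, whose entries are $\kappa(x-y)$ with $x \in \calK_1$, $y \in \calK_2$, hence each bounded by $\eta$; the Frobenius norm of that block is at most $\eta$ times the square root of the number of mesh points in each set, and the $\eps^{-2}$ factors from the two meshes are precisely compensated by the $\ar(\calK_i)$ (and $\leng(\calC_i)$, $k_i$) factors once one keeps track of the normalization. This is where the hypotheses enter: the $C^8$ regularity and decay of derivatives of $\kappa$ (Condition~\ref{a.decay2}) guarantee that the discrete objects converge to their continuous counterparts as $\eps \to 0$ (so the events really are captured in the limit), and the non-degeneracy (Condition~\ref{a.super-std}) keeps $\Sigma_t$ uniformly invertible provided $\eta < 1$, which explains the $1/\sqrt{1-\eta^2}$ factor. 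The $(1+|p|)^4 e^{-p^2}$ factor should emerge from restricting attention to the event that the crossing/circuit/component data is nontrivial: conditionally on there being a relevant level-$(-p)$ component or crossing, the field must take a value near $-p$ somewhere, an event of Gaussian cost $\sim e^{-p^2/2}$ per relevant location, and the combinatorial/perimeter factors bound the number of such locations; squaring across the two sides and being generous with polynomial corrections gives the stated form. I would isolate this $p$-dependence as a separate lemma.

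The main obstacle, as I see it, is twofold. First, making the passage to the continuum rigorous without invoking a quantitative approximation theorem: the events $A, B$ are only $\sigma(\text{crossings, circuits, } N_p)$-measurable, and one must show that a crossing of $\calD_p$ inside a rectangle, or a count of nodal components, is a monotone or otherwise well-behaved limit of its $\eps$-discretized version, uniformly enough that the bound survives $\eps \to 0$ — this is exactly the ``uniform in the discretization mesh'' feature the introduction emphasizes, and it requires care near the boundary of each $\calE_i$ and near the level set itself (e.g.\ ruling out pathological tangencies, which is where continuity of derivatives and non-degeneracy of $(f(x), \nabla f(x))$ get used). Second, producing the product structure $\prod_{i=1}^2(\ar(\calK_i) + \leng(\calC_i) + k_i)$ rather than a single lumped factor: one must show the interpolation derivative factors as (something depending on $\calK_1$-data) times (something depending on $\calK_2$-data) times $\eta$, which amounts to bounding a bilinear form by the product of the two ``sizes''; the $\leng(\calC_i)$ term (for the annuli/boundary-circuit part) and the $k_i$ term (one unit per piece, absorbing edge effects from each $\calE_i$ separately) are the delicate accounting here. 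I would set up the finite-dimensional bound to be manifestly bilinear in the two mesh-point counts from the start, so that this factorization is automatic, and only at the end translate mesh-point counts into areas, perimeters, and piece counts.
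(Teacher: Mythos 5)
The central difficulty, which you identify but do not actually resolve, is that any bound on a global distance (total variation, Radon--Nikodym discrepancy, KL divergence) between the joint and decoupled Gaussian laws of the discretized fields blows up as the mesh $\eps\to 0$: the off-diagonal covariance block has $O(\eps^{-4}\ar(\calK_1)\ar(\calK_2))$ entries each of size up to $\eta$, so its Frobenius norm scales like $\eta\,\eps^{-2}$ and the ``distance'' you propose to control scales like $\eta\,\eps^{-4}\ar(\calK_1)\ar(\calK_2)$. Your statement that ``the $\eps^{-2}$ factors from the two meshes are precisely compensated by the $\ar(\calK_i)$ factors once one keeps track of the normalization'' is where the proposal breaks: the $\eps^{-2}$ is an extra divergent factor \emph{multiplying} the area, not a normalization absorbed by it. No amount of careful bookkeeping rescues a bound of that shape, and indeed the paper's Remark after Theorem~\ref{t.qi_rect} (the one exhibiting events $A_s$, $B_s$ for the Bargmann--Fock field) shows that a bound of the ``TV-distance'' form $C'\eta\,\ar(\calK_1)\ar(\calK_2)$ independent of the particular events is \emph{false}.

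What makes the paper's argument work, and what your proposal is missing, is the pivotal-event mechanism. One does not differentiate the log-density along the covariance interpolation; one differentiates $\prob[X_t\in U\cap V]$ directly, uses the heat-equation identity $\partial_{\Sigma_{ij}}\Gamma=\partial_{x_i}\partial_{x_j}\Gamma$ for the Gaussian density, and then integrates by parts once in $x_i$ and once in $x_j$ (Lemma~\ref{l.threshold}). Because $U$ and $V$ are threshold events, each integration by parts produces an indicator of a pivotal set $\Piv_i(U)$, $\Piv_j(V)$, together with the density of $(X_t(i),X_t(j))$ at $(-p,-p)$; this is Proposition~\ref{p.general_formula}. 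The quantity that replaces your Frobenius-norm estimate is then $\sum_{i,j}|\Sigma_{ij}|\prob[X_t\in\Piv_i(U^\eps)\cap\Piv_j(V^\eps)\mid X_t(i)=X_t(j)=-p]$, and the whole point of Sections~\ref{ss.pivo_excep}--\ref{ss.Kac_Rice} is to prove (Proposition~\ref{p.kac-rice}, via geometric lemmas and Kac--Rice) that this conditional pivotal probability is $O(\eps^4)$ for interior mesh points, $O(\eps^3)$, $O(\eps^2)$, $O(\eps)$ for boundary points and corners. It is exactly this $\eps$-decay of pivotality, not anything about the covariance block, that cancels the $\eps^{-4}$ (resp.\ $\eps^{-2}$, $\eps^{0}$) counts of mesh-point pairs and yields a bound \emph{uniform in $\eps$} of the form $\eta\prod_i(\ar(\calK_i)+\leng(\calC_i)+k_i)$. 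The same accounting also delivers the $(1+|p|)^4 e^{-p^2}$ prefactor honestly: the $e^{-p^2}$ comes from the joint density of $(X_t(i),X_t(j))$ at $(-p,-p)$, and the $(1+|p|)^4$ from the conditional mean (linear in $p$) of the four second derivatives appearing in the Kac--Rice integrand. You would need to replace the ``TV distance'' step of your plan by this pivotal-event/Kac--Rice argument before the rest could go through; once you have it, the passage $\eps\to 0$ is the comparatively soft transversality argument you sketch.
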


Note that in Theorem~\ref{t.qi_rect} we can consider crossing of rectangles (and similarly circuit in annuli) by \textbf{level} lines. Indeed, by Remark~\ref{r.planar_duality}, given a rectangle and for each $p \in \R$, a.s. there is a crossing of a rectangle included in $\calN_p$ if and only if there is such a crossing above $-p$ and a crossing below $-p$. The proof of Theorem~\ref{t.qi_rect} follows a perturbative technique applied to a discrete approximaion of our model (see Section~\ref{s.qi.vector}). To quantify the perturbation we control certain ``pivotal'' events using geometric techniques and the Kac-Rice formula (see Section~\ref{s.weak-qi}).

\begin{remark}\label{r.example}
If the perimeter of each of the rectangles and annuli of Theorem~\ref{t.qi_rect} is at most $s$, if $\calK_1$ and $\calK_2$ are at distance more than $s$ and if $\kappa(x) = O \left(|x|^{-\alpha} \right)$ then the right-hand-side of the estimates of Theorem~\ref{t.qi_rect} is:
\[
O \left( s^{4-\alpha} \left( 1+\frac{k_1+k_2}{s}+\frac{k_1k_2}{s^2} \right) \right) = O \left( k_1 k_2 s^{4-\alpha} \right) \, ,
\]
uniformly in $p$ as $s\rightarrow +\infty$ with $k_1$ and $k_2 $ fixed. Here we see how our condition $\alpha > 4$ from Theorems~\ref{t.RSW} and~\ref{t.concentration_NS} appears: $4$ equals $2$ times the dimension. It seems that it would require new ideas to cross this value.
\end{remark}

\begin{remark}
After the elaboration of this manuscript, the following works were brought to our attention:
\begin{itemize}
\item Piterbarg's mixing inequality (see for instance Theorem 1.2 of \cite{piterbarg}). This inequality is a more general version of our Proposition \ref{p.general_formula} below. We have chosen to keep it in the main body of the proof because we interpret and present it with a different point of view. See also Remark \ref{r.disclaimer}.
\item An almost independence result from~\cite{nsv_2007,nsv_2008,ns_2010}. In Theorem~3.1 of~\cite{ns_2010} (see also Theorem~3.2 of~\cite{nsv_2007} and Lemma~5 of~\cite{nsv_2008}), the authors derive a quasi-independence result for Gaussian entire functions. The result states roughly that a Gaussian entired function $f$, when restricted to a disjoint union of compact subsets of $\C$ not too large and far enough from each other, can be realized as a sum of independent copies of itself on each compact subset and a small perturbation. While the result is proved only for Gaussian entire functions, we believe it could apply to general Gaussian fields with sufficient decorrelation and regularity properties. To deduce a result similar to our Theorem~\ref{t.qi_rect} from Theorem~3.1 of~\cite{ns_2010}, one would need to understand how a perturbation of the field affects the events that we consider.
\end{itemize}
\end{remark}

\begin{remark}
At least one of the terms $\textup{Length}(\calC_i)$ and $k_i$ on the right-hand-side of the inequality in Theorem~\ref{t.qi_rect} must be present for the inequality to hold. Indeed, in their absence, we would have a quasi-independence result uniform in the choice (and number) of rectangles involved in the events $A$ and $B$ as long as these rectangles stay within prescribed sets $\calK_1$ and $\calK_2$. Moreover the excursion set $\calD_p$ is measureable with respect to the $\sigma$-algebra generated by the crossings of rectangles. Hence, we would have obtained the following result: let $\calK_1, \calK_2$ be two open subsets of the plane far enough from each other, let $p \in \R$ and let $A$ (resp. $B$) be an event measurable with respect to the excursion set $\calD_p\cap\calK_1$ (resp. $\calD_p\cap\calK_2$). Also, let $\eta=\sup_{x\in\calK_1,y\in\calK_2}|\kappa(x-y)|$ and assume that $\eta \leq 1/2$, then:
\begin{equation}\label{e.contradiction}
\left|\prob[A\cap B]-\prob[A]\prob[B]\right|\\
\leq C' \, \eta \, \ar(\calK_1) \, \ar(\calK_2)  \, .
\end{equation}
But this cannot be true in full generality. Indeed, let $f$ be the Bargmann-Fock field\footnote{For more information concerning the Bargmann-Fock field, we refer the reader to \cite{bg_16}.} described above, that is, the analytic Gaussian field with covariance $K(x,y)=e^{-\frac{1}{2}|x-y|^2}$. Then it is easy to see that $f$ satisfies Conditions~\ref{a.super-std} and \ref{a.decay2} so Theorem~\ref{t.qi_rect} applies. For each $s \in ]0,+\infty[$, let $A_s$ (resp. $B_s$) be the event that there is a continuous path in $\calN_0$ from $\partial [-s,s]^2$ (resp. $\partial [-3s,3s]^2$) to $\partial[-4s,4s]^2$. But $f$ is analytic and $\calN_0$ is a.s. smooth (see Lemma~\ref{l.transversality.1}) so $A_s$ is measureable\footnote{Indeed, a connected component of $\calN_0$ is a deterministic function of any segment of this component by unique analytic continuation and by the analytic implicit function theorem.} with respect to $\calD_0\cap [-2s,2s]^2$. On the other hand, $B_s$ is measureable with respect to $\calD_0\cap ([-4s,4s]^2\setminus]-3s,3s[^2)$. But $A_s$ implies $B_s$. Hence, if Equation~\eqref{e.contradiction} were valid, we would have
\[
O\left( s^4e^{-s^2/2}\right)=\left|\prob\left[A_s\cap B_s\right]-\prob\left[A_s\right]\prob\left[B_s\right]\right|=\prob\left[A_s\right]\prob\left[B_s^c\right]\, .
\]
But the Bargmann-Fock field satisfies the hypotheses of Theorem~\ref{t.RSW} so both $A_s$ and $B_s^c$ have probability bounded from below as $s\rightarrow +\infty$.
\end{remark}

\paragraph{Extension of the above results.} We believe that Theorem~\ref{t.qi_rect} above can be extended, in at least three directions. First, intead of considering rectangles and square annuli, one could consider quads (i.e. topological rectangles) and more general annuli. It seems that the treatment of the phenomena at the boundary will add new technical difficulties and we believe that, if we considered quads with piecewise smooth boundaries, then we might have obtained the same estimate as in Theorem~\ref{t.qi_rect} but with the following right hand side:
\[
\frac{C \,\eta}{\sqrt{1-\eta^2}}(1+|p|)^4 \, e^{-p^2} \prod_{i=1}^2\left(\ar(\calK_i)+\int_{\calC_i} (1+|\mathbf{k}|(t)) dt +k_i\right)\, ,
\]
where $dt$ is the length measure on the boundaries of the quads and $|\mathbf{k}|$ is the curvature (which is a Dirac mass at non-smooth points).\\

A second extension would be an extension to higher dimensions. We believe that the techniques of the present paper (except when we study the box-crossing property) are not restricted to the planar case. However, it seems that an extension to higher dimensions would add technical difficulties in intermediate lemmas of Section~\ref{s.weak-qi}.\\

A third extension would be to a larger class of events. It seems to be an interesting question to characterize a class of events for which our methods from Sections~\ref{s.qi.vector} and~\ref{s.weak-qi} work.

\paragraph{Proof Sketch.} The proof of Theorem~\ref{t.qi_rect} relies on an abstract quasi-independence result for threshold events of Gaussian vectors, namely Proposition \ref{p.general_formula}. In this proposition, given a Gaussian vector $X$ and two ``threshold events'' $\{X\in A\}$ and $\{X\in B\}$ measureable with respect to disjoint sets of coordinates (e.g. discrete crossing events of disjoint rectangles), we define a new Gaussian vector $Y$ whose covariance is close to that of $X$ such that $\{Y\in A\}$ and $\{Y\in B\}$ are independent. Next, we create a path $(X_t)_t$ of Gaussian vectors with $X_0=X$ and $X_1=Y$ and control the derivative of $\prob\left[X_t\in A\cap B\right]$ with respect to $t$ via ``pivotal'' events associated to $A$ and $B$. The path method we have just sketched is inspired by Slepian's proof of the normal comparison inequality (see Lemma 1 of~\cite{slepian_1962}). The only novelty so far is the interpretation of the quantities which arise as \textbf{probabilities of pivotal events}.\\
Once this core result is established, in Section~\ref{s.weak-qi}, we fix $A$ and $B$ as in\footnote{Actually, for simplicity, we begin with the case where $A$ and $B$ are crossing and circuit events. Once the proof is complete, we explain how to deal with the general case in Subsection~\ref{ss.components}.} the statement of Theorem~\ref{t.qi_rect}. Then, we discretize $\calK_1\cup\calK_2$ and approximate $A$ and $B$ by some discrete events $A^\eps$, $B^\eps$. We then prove the estimate of Theorem~\ref{t.qi_rect} for $A^\eps$ and $B^\eps$ with uniform bounds on $\eps$ and let $\eps$ go to $0$. This is the object of Proposition~\ref{p.eps_qi}. In order to prove the discrete inequality we first use Proposition~\ref{p.general_formula} for $X$ equal to $f$ restricted to the discretization, with $U=A^\eps$ and $V=B^\eps$. The right hand side is similar to the right hand side in Proposition~\ref{p.eps_qi}. The key is then to find good enough bounds for the probabilities of pivotal events. This is the object of Proposition~\ref{p.kac-rice}, at least for crossing events. The general case is dealt with in Subsection~\ref{ss.components}. Roughly speaking, if $x$ is an interior point, to be pivotal it must have four neighbors of alternating signs, so there is an $\eps$-approximate saddle point near $x$, which has probability $O(\eps^2)$. If $x$ is on the boundary (but not a corner), to be pivotal, it must have two neighbors with the same sign separated by a third neighbor with the opposite sign, all three on the same side of a line passing through $x$. We interpret this as a condition for the tangent of the nodal set at $x$ to belong to an angle of size $\eps$, which has probability $O(\eps)$. The proof of Proposition~\ref{p.kac-rice} is divided in two steps. The first is to show that pivotal events imply the existence of zeros of certain fixed derivatives of $f$. The arguments are of geometric nature and are presented in Subsection~\ref{ss.pivo_excep}. The second part is to prove that these events are indeed exceptional using Kac-Rice type arguments. This is done in Subsection \ref{ss.Kac_Rice}.

\paragraph{Outline.} In Section~\ref{s.qi.vector} we recall the key estimate needed to establish Theorem~\ref{t.qi_rect}, namely Proposition~\ref{p.general_formula}. We prove Theorem~\ref{t.qi_rect} (the quasi-independence thereorem for nodal lines) in Section~\ref{s.weak-qi}. More precisely, in Subsections~\ref{ss.qi_cross}, \ref{ss.pivo_excep} and \ref{ss.Kac_Rice} we prove this theorem in the case where $A$ and $B$ are generated by crossing events and then in Subsection~\ref{ss.components} we explain how to take into account the number of level lines components. In Section~\ref{s.Tassion}, we combine Theorem~\ref{t.qi_rect} (in the case of crossings) with Tassion's method (from~\cite{tassion2014crossing}) to obtain Theorem~\ref{t.RSW}. In Section~\ref{s.concentration_NS}, we use this theorem (in the case of number of nodal components) to obtain Theorem~\ref{t.concentration_NS} (concerning the lower concentration of the number of nodal components). Finally, in Appendix~\ref{s.basics} we recall classical results about Gaussian fields and in Appendix~\ref{s.eps.RSW} we prove a discrete box-crossing estimate uniform on the mesh, see Proposition~\ref{p.eps_rsw}.

\paragraph{Acknowledgements:}$ $\\
We are grateful to Christophe Garban and Damien Gayet for their helpful comments on the organization of the manuscript. We are also thankful for the stimulating discussions we had with them and with Vincent Beffara. Moreover, we would like to thank Stephen Muirhead for pointing to us the work by Piterbarg (after the elaboration of this manuscript). Finally, we wish to
thank the anonymous referee for his$\cdot$her helpful comments.

\section{Quasi-independence for Gaussian vectors}\label{s.qi.vector}
In this section, we reinterpret a classical quasi-independence formula of Gaussian vectors, namely Proposition~\ref{p.general_formula} below, which is at the heart of the proof of Theorem~\ref{t.qi_rect}. We first need to introduce some notation.

\begin{notation}
For any subset $U\subseteq\R^n$, write:
\[
\Piv_i(U)=\left\lbrace (x_1,\cdots,x_n)\in\R^n \, : \, \exists y_1,y_2\in\R, \,  \begin{array}{l}
(x_1,\cdots,x_{i-1},y_1,x_{i+1},\cdots,x_n)\in U,\\
(x_1,\cdots,x_{i-1},y_2,x_{i+1},\cdots,x_n)\notin U
\end{array}
\right\rbrace\, .
\]
\end{notation}

\begin{remark}
Note that $\Piv_i(U)$ is a subset of $\R^n$ that does not depend on the $i^{th}$ coordinate. Hence, we will sometimes see $\Piv_i(U)$ as a subset of $\R^{n-1}$ by forgetting the $i^{th}$ coordinate.
\end{remark}

\begin{remark}\label{r.piv}
For any $U,V\subseteq\R^n$ and any $i\in\{1,\dots,n\}$, we have:
\[
\piv_i(U)=\piv_i(U^c)\text{ and }\piv_i(U\cap V)\cup\piv_i(U\cup V)\subseteq\piv_i(U)\cup\piv_i(V)\, .
\]
\end{remark}

\begin{prop}\label{p.general_formula}
Let $k_1, k_2 \in \Z_{>0}$, let $X$ be a non-degenerate centered Gaussian vector of dimension $k_1+k_2$, and write $\Sigma$ for the covariance matrix of $X$. Assume that, for each $i \in \{ 1, \cdots, k_1+k_2 \}$, $\Sigma_{ii}=1$. Moreover, let $Y$ be a centered Gaussian vector of dimension $k_1+k_2$ independent of $X$ such that $(Y_i)_{1\leq i\leq k_1}$ has the same law as $(X_i)_{1\leq i \leq k_1}$, $(Y_j)_{k_1+1\leq j\leq k_1+k_2}$ has the same law as $(X_j)_{k_1+1\leq j \leq k_1+k_2}$, and the vectors $(Y_i)_{1\leq i\leq k_1}$ and  $(Y_j)_{k_1+1\leq j\leq k_1+k_2}$ are independent. For all $t\in [0,1]$, let $X_t=\sqrt{t}X+\sqrt{1-t}Y$. Furthermore, let $\vq \in \R^{k_1+k_2}$ let $U$ (resp. $V$) belong to the sub-$\sigma$-algebra of $\calB(\R^{k_1+k_2})$ generated by the sets $\lbrace x_i\geq q_i\rbrace$ for any $i\in\lbrace 1,\cdots,k_1\rbrace$ (resp. $i\in\lbrace k_1+1,\cdots,k_1+k_2\rbrace$). Then, we have:
\begin{multline*}
\big| \Pro \left[ X \in U\cap V \right]-\Pro \left[X \in U \right] \Pro \left[ X \in V \right] \big|\\
\leq \sum_{i \in \{ 1, \cdots, k_1 \},\atop j \in \{ k_1+1, k_1+k_2 \}} |\Sigma_{ij}| \, \int_0^1 \Pro \left[ X_t \in \Piv_i(U) \cap \Piv_j(V) \cond X_t(i)=q_i, \, X_t(j)=q_j \right] \, dt\\
\times \frac{1}{2\pi\sqrt{1-\Sigma_{ij}^2}}\exp \left(-\frac{q_i^2+q_j^2}{2} \right)\, .
\end{multline*}
\end{prop}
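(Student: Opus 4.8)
The plan is to use the interpolation path $X_t = \sqrt{t}X + \sqrt{1-t}Y$ and compute $\frac{d}{dt}\Pro[X_t \in U \cap V]$, integrating from $t=0$ to $t=1$. Note that $X_1 = X$, so $\Pro[X_1 \in U\cap V] = \Pro[X \in U\cap V]$, while at $t=0$, $X_0 = Y$ and by construction $\{Y\in U\}$ (depending only on the first $k_1$ coordinates) and $\{Y\in V\}$ (depending only on the last $k_2$) are independent, with the same marginals as $X$; hence $\Pro[X_0 \in U\cap V] = \Pro[X\in U]\Pro[X\in V]$. Therefore the left-hand side equals $\bigl|\int_0^1 \frac{d}{dt}\Pro[X_t \in U\cap V]\, dt\bigr|$, and it suffices to bound the integrand.

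The covariance of $X_t$ is $\Sigma_t := t\Sigma + (1-t)\Sigma_0$, where $\Sigma_0$ is the block-diagonal covariance of $Y$; in particular $\frac{d}{dt}(\Sigma_t)_{ij} = \Sigma_{ij} - (\Sigma_0)_{ij}$, which equals $0$ when $i,j$ are in the same block and equals $\Sigma_{ij}$ when $i \leq k_1 < j$ (or vice versa). The standard tool here is the diffusion/heat-equation identity for Gaussian densities: if $\varphi_{\Sigma_t}$ denotes the density of $X_t$, then $\frac{\partial}{\partial t}\varphi_{\Sigma_t} = \sum_{i<j} \dot{(\Sigma_t)}_{ij}\, \frac{\partial^2}{\partial x_i \partial x_j}\varphi_{\Sigma_t}$ (this is exactly the computation underlying Slepian's lemma, see~\cite{slepian_1962}). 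Differentiating under the integral sign and using that only cross-block pairs contribute,
\[
\frac{d}{dt}\Pro[X_t \in U\cap V] = \sum_{i \leq k_1 < j} \Sigma_{ij} \int_{U\cap V} \frac{\partial^2}{\partial x_i \partial x_j}\varphi_{\Sigma_t}(x)\, dx \, .
\]
Now fix such a pair $(i,j)$ and integrate by parts twice, once in $x_i$ and once in $x_j$. Because $U$ is, up to a null set, a union of sets of the form $\{x_i \geq q_i\}$ intersected with cylinders in the other first-block coordinates, its indicator, as a function of $x_i$ alone (with the remaining coordinates frozen), is piecewise constant with its only possible jump at $x_i = q_i$; thus $\frac{\partial}{\partial x_i}\un_U$ is, in the distributional sense, a signed sum of Dirac masses supported on $\{x_i = q_i\}$, and similarly for $V$ in $x_j$. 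Carrying out the two integrations by parts pins $x_i$ to $q_i$ and $x_j$ to $q_j$, and the factor $\varphi_{\Sigma_t}$ evaluated on this affine slice factors as the marginal density of $(X_t(i),X_t(j))$ at $(q_i,q_j)$ — which is $\frac{1}{2\pi\sqrt{1-(\Sigma_t)_{ij}^2}}\exp(-(q_i^2+q_j^2)/2)$ — times the conditional density of the remaining coordinates; the surviving integral of that conditional density over the appropriate event is exactly $\Pro[X_t \in \Piv_i(U)\cap\Piv_j(V) \mid X_t(i)=q_i, X_t(j)=q_j]$, since being in $\Piv_i(U)$ records precisely that flipping the $i$-th coordinate across $q_i$ can change membership in $U$, i.e.\ that the slice sits on a jump of $\un_U$. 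Taking absolute values, bounding $(\Sigma_t)_{ij}^2 \leq \Sigma_{ij}^2$ (valid since $|(\Sigma_t)_{ij}| = t|\Sigma_{ij}| \leq |\Sigma_{ij}|$ for cross-block pairs, as $(\Sigma_0)_{ij}=0$), and integrating over $t \in [0,1]$ yields the claimed bound.

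The main obstacle is making the double integration by parts rigorous: the indicators $\un_U, \un_V$ are not smooth, so one must either work with distributional derivatives and justify that the boundary/decay terms vanish (which they do, since $\varphi_{\Sigma_t}$ and all its derivatives decay faster than any polynomial), or approximate $U$ and $V$ by smooth cylinder functions and pass to the limit. A secondary point requiring care is the bookkeeping that identifies the resulting conditional integral with $\Pro[\,\cdot \in \Piv_i(U)\cap\Piv_j(V)\mid\cdots]$ rather than some larger set — one must check that a jump of $\un_{U}$ in the $x_i$-direction at the frozen values of the other coordinates occurs exactly on the set $\Piv_i(U)$ (viewed as a subset of $\R^{n-1}$), and that the joint jump of $\un_U \un_V$ factors as the product of the two pivotal conditions because $U$ and $V$ depend on disjoint coordinate blocks; the inequality (rather than equality) in the statement comes from replacing a signed combination of Dirac masses by its total-variation bound, i.e.\ discarding cancellations, and from the bound $|(\Sigma_t)_{ij}|\le|\Sigma_{ij}|$.
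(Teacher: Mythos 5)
Your proof follows the same route as the paper: interpolate along $X_t = \sqrt{t}X + \sqrt{1-t}Y$, differentiate $\Pro[X_t\in U\cap V]$ using the heat-equation identity $\partial_{\Sigma_{ij}}\Gamma = \partial_{x_i}\partial_{x_j}\Gamma$, and then pin $x_i=q_i$, $x_j=q_j$ by a one-dimensional argument (the paper phrases this via Fubini plus the fundamental theorem of calculus — its Lemma~\ref{l.threshold} — rather than distributional integration by parts, but this is the same move and your smoothing/limiting remark addresses the rigor). Your bookkeeping that the joint jump of $\un_U\un_V$ factors, and that the surviving set is exactly $\Piv_i(U)\cap\Piv_j(V)$, also matches.

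There is, however, a concrete error in how you evaluate the Gaussian density on the slice. You assert that the marginal density of $(X_t(i),X_t(j))$ at $(q_i,q_j)$ equals
\[
\frac{1}{2\pi\sqrt{1-(\Sigma_t)_{ij}^2}}\exp\!\left(-\frac{q_i^2+q_j^2}{2}\right)\, .
\]
That is not the bivariate Gaussian density. For a centered bivariate Gaussian with unit variances and correlation $\rho = (\Sigma_t)_{ij} = t\Sigma_{ij}$, the density at $(q_i,q_j)$ is
\[
\gamma_t(i,j) = \frac{1}{2\pi\sqrt{1-\rho^2}}\exp\!\left(-\frac{q_i^2-2\rho\,q_iq_j+q_j^2}{2(1-\rho^2)}\right)\, ,
\]
and the cross-term $-2\rho q_iq_j$ is not innocuous: depending on the signs of $\rho$ and $q_iq_j$ this quantity can exceed your claimed expression. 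The paper is more careful here — it leaves the density as $\gamma_t(i,j)$ and then \emph{bounds} it by controlling the quadratic form $\langle q,(\Sigma_t^{(ij)})^{-1}q\rangle$ via the smallest eigenvalue of the inverse covariance. You should do the same: write the slice density as $\gamma_t(i,j)$ and produce an explicit upper bound for it, rather than substituting an incorrect closed form. Your bound $|(\Sigma_t)_{ij}|\le|\Sigma_{ij}|$ for the prefactor $1/\sqrt{1-\rho^2}$ is fine; it is the exponential factor that needs the extra argument.

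A secondary note: "$U$ is, up to a null set, a union of sets of the form $\{x_i\ge q_i\}$ intersected with cylinders" is looser than needed — $U$ lives in the Boolean algebra generated by the half-spaces $\{x_i\ge q_i\}$, so for fixed values of the other coordinates the slice $U_i(\tilde x)$ is exactly one of $\emptyset$, $\R$, $(-\infty,q_i)$, or $[q_i,+\infty)$, and the third and fourth cases are exactly $\tilde x\in\Piv_i(U)$. Making this explicit (as the paper's Lemma~\ref{l.threshold} does) is what guarantees that a jump, when present, is a single unit mass at $q_i$ with a sign, so that taking absolute values loses only the sign and not some multiplicity.
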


\begin{remark}\label{r.disclaimer}
Proposition \ref{p.general_formula} is a reinterpretation of a classical quasi-independence formula for Gaussian vectors used in quantitative versions of Slepian's Lemma (see \cite{slepian_1962} and Chapter 1 of \cite{piterbarg}, especially Theorem 1.1). The proof presented here is very close to that of \cite{piterbarg} except that we work in a level of generality more adapted to our purposes and that we introduce the notion of pivotal events, which are central in the proof of Theorem \ref{t.qi_rect}. Later, we use this definition to show that these probabilities are small for discrete approximations of crossing events.
\end{remark}
\begin{remark}
The proof of Proposition \ref{p.general_formula} is an interpolation argument. The path $X_t$ defined in the statement is an interpolation between $X$ and $Y$. By construction of $Y$, $\prob[Y\in U\cap V]=\prob[X\in U]\prob[X\in V]$ so the left hand side of the inequality can be written as
\[
\int_0^1\frac{d}{dt}\prob[X_t\in U\cap V]dt
\]
if you admit that the probability is differentiable. Now the first order of variation of this probability should correspond to how likely the events $X_t\in U$ and $X_t\in V$ are to change when $X_t$ one perturbs one of the $X_{t,i}$ and $X_{t,j}$ jointly, by a bump that depends on the shift in the covariance, which here is $\Sigma_{ij}$ if $i\leq k_1<j$ and $0$ otherwise. But this is precisely what is expressed in the right hand side of the inequality.
\end{remark}

\begin{lemma}\label{l.threshold}
Fix $n \in \Z_{>0}$, $\vq \in \R^n$ and let $U$ belong to the sub-$\sigma$-algebra of $\calB(\R^n)$ generated by the sets $\lbrace x_i\geq q_i\rbrace$ for $i\in\lbrace 1,\dots,n\rbrace$. Also, let $\varphi$ be a function which belongs to the Schwartz space $\calS(\R^n)$. Then, for each  $i\in\lbrace 1,\dots, n\rbrace$, there exists a measurable function $\epsilon_i=\epsilon_i(\varphi,U):\R^{n-1}\rightarrow\lbrace -1,0,1\rbrace$ such that:
\[
\int_U \frac{\partial \varphi}{\partial x_i}(x)dx = \int_{\Piv_i(U)}\epsilon_i(x_1,\dots,x_{i-1},x_{i+1},\dots,x_n)\, \varphi(x_1,\dots,x_{i-1},q_i,x_{i+1},\dots,x_n)\prod_{j\neq i}dx_j \, .\]
\end{lemma}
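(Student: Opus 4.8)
The plan is to reduce everything to a one-dimensional statement about the single variable $x_i$, with the remaining variables $x' := (x_1,\dots,x_{i-1},x_{i+1},\dots,x_n)$ playing the role of parameters. Fix $i$. Since $U$ lies in the sub-$\sigma$-algebra generated by the half-spaces $\{x_j \geq q_j\}$, for each fixed value of $x'$ the slice $U_{x'} := \{ t \in \R : (x_1,\dots,x_{i-1},t,x_{i+1},\dots,x_n) \in U \}$ is a set that depends on $t$ only through the sign of $t - q_i$; hence $U_{x'}$ is, up to the single value $t = q_i$ (which has Lebesgue measure zero), one of the four sets $\emptyset$, $(-\infty,q_i)$, $[q_i,\infty)$, or $\R$. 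First I would record this dichotomy precisely, using that $U$, being in that sub-$\sigma$-algebra, is a (countable Boolean, hence) Borel combination of the half-spaces and that membership is therefore determined by the tuple of signs $(\mathbf 1_{x_j \geq q_j})_j$.

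Next I would compute the inner integral $\int_{U_{x'}} \frac{\partial \varphi}{\partial x_i}(x',t)\, dt$ by the fundamental theorem of calculus, using that $\varphi \in \calS(\R^n)$ so that $\varphi$ and all its partials vanish at $\pm\infty$ in each variable. In the four cases above this integral equals, respectively, $0$, $\varphi(x',q_i)$ (from $\int_{-\infty}^{q_i} \partial_i\varphi\, dt$), $-\varphi(x',q_i)$ (from $\int_{q_i}^{\infty}\partial_i\varphi\, dt$), and $0$. Thus in every case it equals $\eps_i(x')\,\varphi(x',q_i)$ where $\eps_i(x') \in \{-1,0,1\}$ is defined to be $+1$, $-1$, or $0$ according to which of the four cases the slice $U_{x'}$ falls into. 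Crucially, $\eps_i(x') \neq 0$ exactly when the slice is a genuine half-line, i.e. exactly when both $U_{x'}$ and its complement are nonempty, which is precisely the condition that $x' \in \Piv_i(U)$ (viewed as a subset of $\R^{n-1}$ by forgetting the $i$-th coordinate, as in the Remark after the Notation). Hence $\eps_i$ is supported on $\Piv_i(U)$, and restricting the $x'$-integral to $\Piv_i(U)$ changes nothing. Measurability of $\eps_i$ in $x'$ follows since $\eps_i$ is a finite composition of indicator functions of the Borel sets cutting out $U$.

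Finally I would assemble the pieces: by Fubini (applicable because $\partial_i\varphi$ is Schwartz, hence integrable on $\R^n$),
\[
\int_U \frac{\partial\varphi}{\partial x_i}(x)\,dx = \int_{\R^{n-1}} \left( \int_{U_{x'}} \frac{\partial\varphi}{\partial x_i}(x',t)\,dt \right) \prod_{j \neq i} dx_j = \int_{\Piv_i(U)} \eps_i(x')\,\varphi(x',q_i) \prod_{j \neq i} dx_j,
\]
which is the claimed identity after reinserting $x'=(x_1,\dots,x_{i-1},x_{i+1},\dots,x_n)$ and writing $\varphi(x',q_i)=\varphi(x_1,\dots,x_{i-1},q_i,x_{i+1},\dots,x_n)$. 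I do not expect a genuine obstacle here; the only point requiring a little care is the first step — justifying rigorously that membership in $U$ depends on $x_i$ only through $\mathbf 1_{x_i \geq q_i}$, so that the slices really are among the four listed sets up to a null set. This is a soft measure-theoretic fact about the sub-$\sigma$-algebra generated by the half-spaces $\{x_j \geq q_j\}$, and once it is in hand the rest is the elementary one-variable computation above.
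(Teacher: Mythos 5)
Your proposal is correct and follows essentially the same argument as the paper's proof: slice $U$ along the $i$-th coordinate, observe that each slice is one of $\emptyset$, $\R$, $(-\infty,q_i)$, $[q_i,\infty)$, apply Fubini and the fundamental theorem of calculus, and define $\epsilon_i$ accordingly with support in $\Piv_i(U)$. The only cosmetic difference is your hedge ``up to the single value $t=q_i$''; in fact the slice is \emph{exactly} one of those four sets since membership in $U$ depends on $x_i$ only through $\mathbf 1_{x_i \geq q_i}$, as you note at the end.
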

\begin{proof}
For each $\tilde{x}=(x_1,\dots,x_{i-1},x_{i+1},\dots,x_n)\in\R^{n-1}$, let $U_i(\tilde{x})$ be the set of $y\in\R$ such that $(x_1,\dots,x_{i-1},y,x_{i+1},\dots,x_n)\in U$. By Fubini's theorem:
\[
\int_{U}\frac{\partial \varphi}{\partial x_i}(x) \, dx =\int_{\R^{n-1}}\int_{U_i(\tilde{x})}\frac{\partial\varphi}{\partial x_i}(x)\,dx_i \, d\tilde{x} \, .
\]
Now, note that, for each $\tilde{x}$, $U_i(\tilde{x})$ equals either $\emptyset$, $\R$, $]-\infty,q_i[$, or $[q_i,+\infty[$. Moreover, if $\tilde{x}\notin \Piv_i(U)$, then $U_i(\tilde{x})=\R\text{ or }\emptyset$. Let $\epsilon_i(\tilde{x})$ be $1$ if $U_i(\tilde{x})=]-\infty,q_i[$, $-1$ if $U_i(\tilde{x})=[q_i,+\infty[$, and $0$ otherwise. By the fundamental theorem of analysis:
  \begin{align*}
   \int_{\R^{n-1}}\int_{U_i(\tilde{x})}\frac{\partial \varphi}{\partial x_i}(x)\,dx_i \, d\tilde{x} &=\int_{\R^{n-1}}\epsilon_i(\tilde{x}) \, \varphi(x_1,\dots,x_{i-1},q_i,x_{i+1},\dots,x_n)\,d\tilde{x}\\
    &=\int_{\piv_i(U)}\epsilon_i(\tilde{x}) \, \varphi(x_1,\dots,x_{i-1},q_i,x_{i+1},\dots,x_n)\,d\tilde{x} \, .
 \end{align*}
Note that Fubini's theorem and the fundamental theorem of analysis can be applied since $\varphi\in\mathcal{S}(\R^n)$.
\end{proof} 

\begin{proof}[Proof of Proposition~\ref{p.general_formula}]
Note that we have:
\begin{eqnarray*}
\Pro \left[ X \in U \cap V \right] - \Pro \left[ X \in U \right] \Pro \left[ X \in V \right] & = & \Pro \left[ X \in U \cap V \right] - \Pro \left[ Y \in U \cap V \right]\\
& = & \Pro \left[ X_1 \in U \cap V \right] - \Pro \left[ X_0 \in U \cap V \right] \, .
\end{eqnarray*}
Hence, it is sufficient to prove that, for each $t \in [0,1]$, we have:
\begin{multline}
\left| \frac{d}{dt} \Pro \left[ X_t \in U \cap V \right] \right| \\
\leq \sum_{i \in \{ 1, \cdots, k_1 \},\atop j \in \{ k_1+1, k_1+k_2 \}} |\Sigma_{ij}| \, \Pro \left[ X_t \in \Piv_i(U) \cap \Piv_j(V) \cond X_t(i)=q_i, \, X_t(j)=q_j \right]\\
\hspace{2cm} \times \frac{1}{2\pi\sqrt{1-\Sigma_{ij}^2}}\exp \left( -\frac{q_i^2+q_j^2}{2} \right)\, .\label{e.d/dt_sufficient}
\end{multline}

Note that since $X$ and $Y$ are non-degenerate and independent, for every $t \in [0,1]$, $X_t$ is non-degenerate. Moreover, $X_t$ has covariance $\Sigma_t$ defined as follows: $\Sigma_{t,ij}=\Sigma_{ij}$ if either $1\leq i,j\leq k_1$ or $k_1+1\leq i,j\leq k_1+k_2$, and $\Sigma_{t,ij}=t \Sigma_{ij}$ otherwise. Let $\Gamma :S_n^{++}(\R)\times\R^n\rightarrow\R$ be\footnote{Here  $S_n^{++}(\R)$ is the set of positive definite symmetric matrices of size $n$, that we see as the corresponding open subset of $\R^{\frac{n(n+1)}{2}} = \{ (\Sigma_{i,j})_{1 \leq i \leq j \leq n} \}$.} the map that associates to a matrix $\Sigma \in S_n^{++}(\R)$ and a point $x \in\R^n$ the Gaussian density at $x$ of a centered gaussian vector of covariance $\Sigma$. The function $\Gamma$ is $C^\infty$ and, for every $1 \leq i < j \leq n$, we have:\footnote{This is a classical property of Gaussian densities which follows immediately by application of the Fourier transform, see for instance Equation (2.3) of \cite{azws}.}
\begin{equation}\label{e.heat}
  \frac{\partial\Gamma}{\partial \Sigma_{i,j}}=\frac{\partial^2\Gamma}{\partial x_i\partial x_j} \, .
\end{equation}
Hence, by using dominated convergence and the chain rule:
\begin{eqnarray}
\frac{d}{dt} \Pro \left[ X_t \in U \cap V \right] & = & \sum_{1 \leq i \leq j \leq k_1+k_2} \frac{d \Sigma_{t,ij}}{dt} \int_{U \cap V} \frac{\partial}{\partial \Sigma_{ij}} \Gamma(\Sigma_t,x) \, dx\nonumber \\
& = & \sum_{i \in \{ 1, \cdots, k_1 \},\atop j \in \{ k_1+1, k_1+k_2 \}} \Sigma_{ij} \int_{U \cap V} \frac{\partial^2}{\partial x_i \partial x_j} \Gamma(\Sigma_t,x) \, dx \text{ by~\ref{e.heat}} \, . \label{e.d/dt_1}
\end{eqnarray}
Since $U$ depends only on the first $k_1$ coordinates and $V$ depends only on the $k_2$ last coordianates, we can apply Lemma~\ref{l.threshold} first to $(U,i,\frac{\partial}{\partial x_j}\Gamma(\Sigma_t,\cdot))$ and then to $(V,j,\Gamma(\Sigma_t,\cdot))$. We obtain that:
\begin{align}
&\left| \int_{U\cap V}\frac{\partial\Gamma}{\partial x_i\partial x_j}(\Sigma_t,x)dx\right| \nonumber \\
&\leq \int_{\Piv_i(U)\cap \Piv_j(V)} \Gamma(\Sigma_t,x_1\dots,x_{i-1},q_i,x_{i+1},\dots,x_{j-1},q_j,x_{j+1},\dots,x_{k_1+k_2}) \prod_{l \in \lbrace 1, \cdots, k_1+k_2 \rbrace, \atop l \notin \{ i,j \}} dx_l \nonumber \\
&=\prob \left[X_t \in \Piv_i(U)\cap \Piv_j(V) \cond X_t(i)=q_i, \, X_t(j)=q_j \right] \gamma_t(i,j) \, , \label{e.d/dt_2}
\end{align} 
where $\gamma_t(i,j)$ is the density of $(X_t(i),X_t(j))$ at $(q_i,q_j)$. Note that:
\begin{equation}\label{e.d/dt_3}
\gamma_t(i,j) \leq \frac{1}{2\pi\sqrt{1-(t\Sigma_{ij})^2}}\exp \left( -\frac{q_i^2+q_j^2}{2(1-t|\Sigma_{ij}|)} \right) \leq  \frac{1}{2\pi\sqrt{1-\Sigma_{ij}^2}}\exp \left( -\frac{q_i^2+q_j^2}{2} \right) \,  .
\end{equation}
Here, in the first inequality, we used the fact that if $A$ is a positive definite symmetric matrix, for any vector $X$, $\langle X,AX\rangle\geq \min\textup{sp}(A)\|X\|^2$. If we combine~\eqref{e.d/dt_1},~\eqref{e.d/dt_2} and~\eqref{e.d/dt_3}, we obtain~\eqref{e.d/dt_sufficient} and we are done.
\end{proof}

\section{Quasi-independence for planar Gaussian fields: the proof of Theorem~\ref{t.qi_rect}}\label{s.weak-qi}

In this section, we prove Theorem~\ref{t.qi_rect}. The steps of the proof are the following: we discretize our model, we apply Proposition~\ref{p.general_formula} to the discrete model, and then we estimate the probability of pivotal events that appear in the proposition. We refer the reader to the introduction for a rough sketch of the proof. Let us now introduce the discretization procedure (by following~\cite{bg_16}).\\

We work with the face-centered square lattice (see Figure~\ref{f.face-centered}) that we denote by $\mathcal{T}$. We denote by $\calT^\eps$ this lattice scaled by a factor $\eps$ and we denote by $\calV^\eps$ the vertex set of $\calT^\eps$. Given a realization of our Gaussian field $f$, some $p \in \R$ and some $\eps > 0$, the signs of the values of $f+p$ on the sites of $\calT^\eps$ is a site percolation model on $\calT^\eps$. It induces a random coloring of the plane defined as follows: For each  $x \in \R^2$, if $x \in \calV^\eps$ and $f(x) \geq -p$ or if $x$ belongs to an edge of $\calT^\eps$ whose two extremities $y_1,y_2$ satisfy $f(y_1) \geq -p$ and $f(y_2) \geq -p$, then $x$ is colored black. Otherwise, $x$ is colored white. In other words, we study a \textbf{correlated site percolation model} on $\cal T^\eps$. We also need the following definition.

\begin{figure}[!h]
\begin{center}
\includegraphics[scale=0.45]{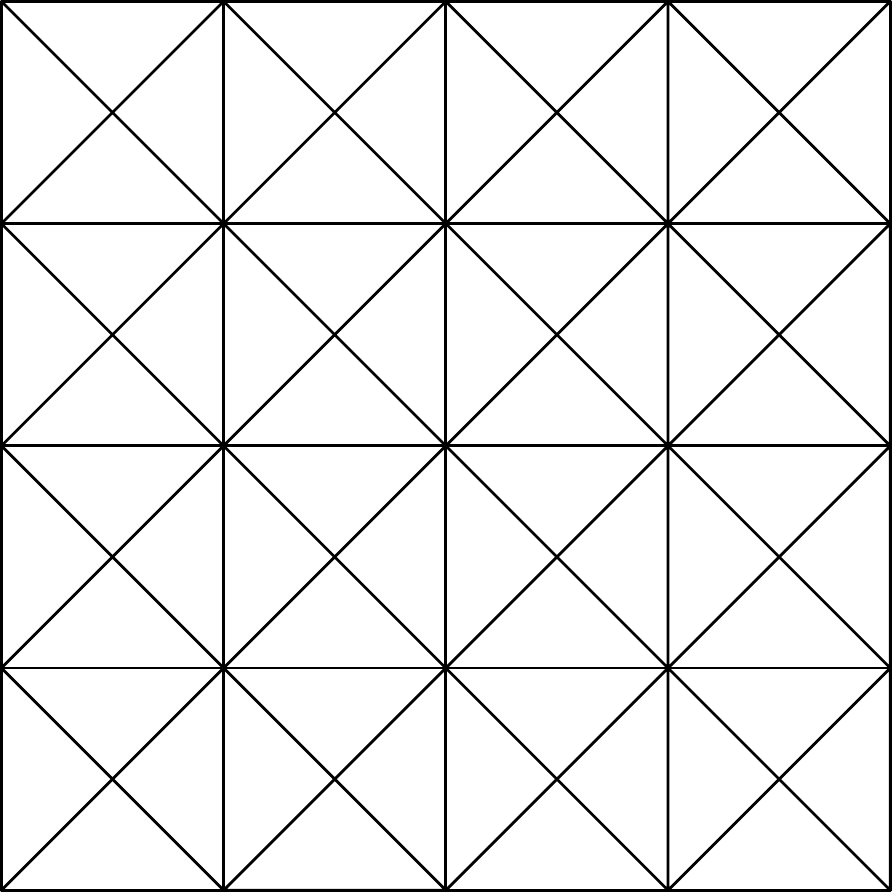}
\end{center}
\caption{The face-centered square lattice (the vertices are the points of $\Z^2$ and the centers of the squares of the $\Z^2$-lattice).}\label{f.face-centered}
\end{figure}

\begin{defi}\label{d.eps_drawn}
Given $\eps > 0$, an $\eps$-drawn rectangle is a rectangle of the form $[a,b]\times [c,d]$ where $a \leq b$ and $c \leq d$ are four integer multiples of $\eps$. An integer annulus is an annulus of the form $x+[-a,a]^2 \setminus ]-b,b[^2$ where $x \in (\eps\Z)^2$ and $a \leq b$ are two positive integer multiples of $\eps$.
\end{defi}

The specific choice of the face-centered square lattice is not very important. We will essentially use the following facts: i) $\calT$ is a triangulation, so we have nice duality arguments, see Remark~\ref{r.eps_duality} below, ii) $\calT$ is translation invariant,  iii) any $\eps$-drawn rectangle and any $\eps$-annulus can be drawn by using the edges of $\calT$, and iv) $\calT$ has nice symmetry properties. Actually, we will use the point iv) only in Section~\ref{s.eps.RSW}, but the results of this latter section are not used in the rest of the paper.\\

We start the proof of Theorem~\ref{t.qi_rect} by showing the result in the case where $A$ and $B$ are generated by crossing and circuit events since the proof is a little less technical in this case. This first part of proof is written in Subsections~\ref{ss.qi_cross},~\ref{ss.pivo_excep} and~\ref{ss.Kac_Rice}. Note that this partial result is already sufficient to prove Theorem~\ref{t.RSW}. We complete the proof of Theorem~\ref{t.qi_rect} by considering also the number of level lines components in Subsection~\ref{ss.components}.

\subsection{The proof of Theorem~\ref{t.qi_rect} in the case of crossing and circuit events}\label{ss.qi_cross}
In this subsection, we work only in the case of crossing and circuit events, we state Proposition~\ref{p.eps_qi}, a discrete analog of Theorem~\ref{t.qi_rect} with constants \textit{uniform in the mesh $\eps$}, and we deduce Theorem~\ref{t.qi_rect} (in the case of crossing and circuit events) from Proposition~\ref{p.eps_qi}. The proof of Proposition~\ref{p.eps_qi} is written in Subsections~\ref{ss.pivo_excep} and~\ref{ss.Kac_Rice}. Before stating this proposition, we need a definition:

\begin{defi}\label{d.crosseps}
Let $\eps>0$, $p\in\R$, and consider the above discrete percolation model. Also, let $\calE$ be a rectangle and $\calA$ be an annulus. We say that there is a left-right $\eps$-crossing of $\calE$ above (resp. below) $-p$ if there is a continuous black (resp. white) path included in $\calE$ from the left side of $\calE$ to its right side. We define top-bottom $\eps$-crossings similarly. We say that there is an $\eps$-circuit in $\calA$ above (resp. below) $-p$ if there is a continuous black (resp. white) path separating the inner boundary of $\calA$ from its outer boundary.
\end{defi}

\begin{remark}\label{r.eps_duality}
We will use the following duality argument which follows from the fact that $\calT$ is a triangulation and that any $\eps$-drawn rectangle and any $\eps$-drawn annulus can be drawn by using edges of $\calT^\eps$ (see Definition~\ref{d.eps_drawn}). Let $\eps > 0$, let $\calE$ be an $\eps$-drawn rectangle. Then, there is left-right crossing of $\calE$ above level $p$ if and only if there is no top-bottom crossing of $\calE$ below level $p$.
\end{remark}

\begin{prop}\label{p.eps_qi}
Let $f$ be a Gaussian field satisfying Conditions~\ref{a.super-std} and \ref{a.decay2}. There exists $d=d(\kappa)<+\infty$ and $C=C(\kappa)<+\infty$ such that we have the following: Let $p\in\R$ and $\eps \in ]0,1]$. Also, let $k_1,k_2 \in \Z_{>0}$ and let $(\calE_i)_{1\leq i\leq k_1+k_2}$ be a collections of either $\eps$-drawn rectangles or $\eps$-drawn annuli. Let
\[
\calK_1=\cup_{i=1}^{k_1}\calE_i,\ \calC_1=\cup_{i=1}^{k_2}\partial\calE_i,\ \calK_2=\cup_{j=k_1+1}^{k_1+k_2}\calE_j,\ \calC_2=\cup_{j=k_1+1}^{k_1+k_2}\partial\calE_j\, .
\]
Let $A^\eps$ (resp. $B^\eps$) be an event in the Boolean algebra generated by the left-right and top-bottom $\eps$-crossings above $-p$ and below $-p$ of rectangles among the $(\calE_i)_i$ for $1\leq i \leq k_1$ (resp. $(\calE_j)_j$ for $k_1+1\leq j \leq k_1+k_2$) and the $\eps$-circuits above $-p$ and below $-p$ in annuli among the $(\calE_i)_i$ for $1\leq i \leq k_1$ (resp. $(\calE_j)_j$ for $k_1+1\leq j \leq k_1+k_2$). Let $\eta=\sup_{x\in\calK_1,y\in\calK_2}|\kappa(x-y)|$. If $\calK_1$ and $\calK_2$ are at distance greater than $d$, then:
\[
\left|\prob[A^\eps\cap B^\eps]-\prob[A^\eps]\prob[B^\eps]\right|\leq\frac{C \,\eta}{\sqrt{1-\eta^2}}(1+|p|)^4 \, e^{-p^2} \prod_{i=1}^2\left(\ar(\calK_i)+\leng(\calC_i)+k_i\right)\, .
\]
\end{prop}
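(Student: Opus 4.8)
The idea is to realise $A^\eps$ and $B^\eps$ as threshold events of a finite Gaussian vector, apply Proposition~\ref{p.general_formula}, and then estimate the resulting sum of pivotal probabilities by a Kac--Rice argument (the latter being the content of Subsections~\ref{ss.pivo_excep} and~\ref{ss.Kac_Rice}). First I would set $\calW_1 = \calV^\eps \cap \calK_1$ and $\calW_2 = \calV^\eps \cap \calK_2$ (finite sets, as $\calK_1,\calK_2$ are bounded) and let $X = (f(v))_{v \in \calW_1 \cup \calW_2}$; by Condition~\ref{a.super-std} this is a non-degenerate centred Gaussian vector with $\Sigma_{vw} = \kappa(v-w)$ and $\Sigma_{vv} = 1$. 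By definition of the colouring, every black (resp.\ white) $\eps$-crossing or $\eps$-circuit of an $\calE_i$ is an increasing (resp.\ decreasing) function of the indicators $(\un\{f(v) \geq -p\})_v$, so $A^\eps$ lies in the sub-$\sigma$-algebra generated by $\{x_v \geq -p\}$, $v \in \calW_1$, and $B^\eps$ in the one generated by $\{x_v \geq -p\}$, $v \in \calW_2$. Choosing $d$ large enough that $\sup_{|x| \geq d}|\kappa(x)| \leq 1/2$ (possible by Condition~\ref{a.decay2}), we get $|\Sigma_{vw}| \leq \eta \leq 1/2$ for $v \in \calW_1$, $w \in \calW_2$, and Proposition~\ref{p.general_formula} (with $\vq = (-p,\dots,-p)$, $U = A^\eps$, $V = B^\eps$) gives
\[
\bigl| \prob[A^\eps \cap B^\eps] - \prob[A^\eps]\prob[B^\eps] \bigr| \leq \frac{\eta\, e^{-p^2}}{2\pi\sqrt{1-\eta^2}} \sum_{v \in \calW_1}\sum_{w \in \calW_2} \int_0^1 \prob\left[ X_t \in \Piv_v(A^\eps) \cap \Piv_w(B^\eps) \cond X_t(v) = -p,\, X_t(w) = -p \right] dt ,
\]
where $\Piv_v$ denotes the pivotal set in the coordinate indexed by $v$. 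Since the covariance of $X_t$ agrees with that of $X$ on the $\calW_1$- and $\calW_2$-blocks and is $t\Sigma_{vw}$ on the cross-block, $X_t$ restricted to $\calW_1$ (resp.\ $\calW_2$) has the law of $f$ restricted there, with cross-correlations still $\leq\eta$. It then remains to bound the double sum by $C(1+|p|)^4 \prod_{i=1}^2(\ar(\calK_i)+\leng(\calC_i)+k_i)$, uniformly in $\eps \in ]0,1]$ and $t$.

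Next I would reduce to local single-vertex events. By Remark~\ref{r.piv}, and since $A^\eps$ is a Boolean combination of the (at most $4$) elementary $\eps$-crossing/$\eps$-circuit events of each $\calE_i \subseteq \calK_1$, the set $\Piv_v(A^\eps)$ is empty unless $v$ lies in some such $\calE_i$, in which case it is contained in a union of events localised near $v$; crucially there are only $O(1)$ distinct such local events at a given $v$ (one ``interior'' type, which does not depend on $\calE_i$, at most two ``side'' types according as a boundary through $v$ is horizontal or vertical, and $O(1)$ ``corner'' types). The heart of the matter --- this is Proposition~\ref{p.kac-rice} --- is the bound, uniform in $\eps \in ]0,1]$,
\[
\prob\left[ X_t \in \Piv_v(A^\eps) \cap \Piv_w(B^\eps) \cond X_t(v) = -p,\, X_t(w) = -p \right] \leq C\, \rho(v)\, \rho'(w),
\]
where $\rho(v)$ equals $1$ if $v$ is a corner of some $\calE_i \subseteq \calK_1$, else $(1+|p|)\eps$ if $v$ lies on $\calC_1$, else $(1+|p|)^2\eps^2$ if $v$ lies in $\calK_1$, and $0$ otherwise, and $\rho'$ is the analogous weight for $\calK_2$. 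Granting this, the double sum is at most $C\bigl(\sum_{v\in\calW_1}\rho(v)\bigr)\bigl(\sum_{w\in\calW_2}\rho'(w)\bigr)$; since $\calV^\eps$ has $O(\ar(\calK_1)\eps^{-2} + \leng(\calC_1)\eps^{-1} + k_1)$ vertices in $\calK_1$, $O(\leng(\calC_1)\eps^{-1} + k_1)$ of them on $\calC_1$, and $O(k_1)$ corner vertices, and $\eps \leq 1$, one gets $\sum_{v\in\calW_1}\rho(v) \leq C(1+|p|)^2(\ar(\calK_1)+\leng(\calC_1)+k_1)$, and symmetrically for $\calW_2$. Substituting into the first display and absorbing numerical constants into $C = C(\kappa)$ yields the claim.

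Thus everything of substance is in Proposition~\ref{p.kac-rice}, which I expect to be the main obstacle and which has two ingredients. The geometric one (Subsection~\ref{ss.pivo_excep}): if $v$ is interior to $\calE_i$, then $\Piv_v(A^\eps)$ forces the lattice neighbours of $v$ around $v$ to have alternating signs, hence --- by the intermediate value theorem along the sides of a square of side $O(\eps)$ about $v$ --- an $\eps$-approximate saddle of $f$ at level $-p$ within $O(\eps)$ of $v$, i.e.\ a near-zero of $\nabla f$ there; if $v$ lies on a side of $\calE_i$, then $\Piv_v(A^\eps)$ forces a sign pattern making the nodal line tangent to that side near $v$, i.e.\ a near-zero of the tangential derivative $\partial_\tau f$ along $\partial\calE_i$ within $O(\eps)$ of $v$; if $v$ is a corner one uses only the trivial bound $\leq 1$. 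The analytic one (Subsection~\ref{ss.Kac_Rice}): Kac--Rice estimates for the probabilities of these near-zeros, and for the simultaneous occurrence of one near $v$ and one near $w$, with constants independent of $\eps$ --- the power $(1+|p|)^2$ (resp.\ $(1+|p|)$) arising from the conditional moments of $\nabla^2 f$ (resp.\ $\partial_\tau^2 f$) given $f(v)=-p$, and the factorisation of the two-point density as $\rho(v)\rho'(w)$ being exactly where the separation $\dist(\calK_1,\calK_2) > d$ together with the decay of $\kappa$ and its derivatives (Condition~\ref{a.decay2}) enters. By contrast, the reduction to finitely many coordinates, the Boolean bookkeeping via Remark~\ref{r.piv}, and the vertex counts are routine.
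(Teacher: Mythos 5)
Your proposal matches the paper's proof almost step for step: set up the restricted Gaussian vector $X^\eps$, apply Proposition~\ref{p.general_formula} with $\vq=(-p,\dots,-p)$, reduce to the double sum of conditional pivotal probabilities, invoke Proposition~\ref{p.kac-rice} to bound each summand by a product of per-vertex weights $\eps^2(1+|p|)^2$ / $\eps(1+|p|)$ / $1$ according to the interior/side/corner type of the vertex, and finish with the $O(\ar\,\eps^{-2} + \leng\,\eps^{-1} + k)$ vertex count. Your repackaging of Proposition~\ref{p.kac-rice} as $\prob[\cdot] \leq C\rho(v)\rho'(w)$ with $\rho$ a function only of the vertex type is a slightly cleaner way to organise the paper's four-case statement, but it is substantively the same bound, and your sketch of the geometric step (alternating signs $\Rightarrow$ approximate saddle in the interior; sign pattern $\Rightarrow$ tangency to the side on the boundary) and of where the separation $d$ enters (non-degeneracy of the limiting conditional covariance in the Kac--Rice estimate) is exactly the paper's argument.

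Two small points. First, the paper's Proposition~\ref{p.kac-rice} is only stated for $\eps \in\,]0,\eps_0]$ with $\eps_0 = \eps_0(\kappa)$, not uniformly for $\eps\in\,]0,1]$ as you assert; for $\eps\in\,]\eps_0,1]$ one falls back to the trivial bound $\prob[\cdot]\leq 1$ on each of the $O_\kappa\bigl((\ar+\leng+k_1)(\ar+\leng+k_2)\bigr)$ pairs, which still gives the claimed estimate since $\eps^{-1}\leq\eps_0^{-1}$ is an absolute constant. Second, for the Kac--Rice step the paper actually introduces a \emph{continuous} interpolating field $f_t$ on open neighbourhoods $\calU_1\cup\calU_2$ of $\calK_1\cup\calK_2$ (Notation~\ref{n.f_t}) rather than working directly with the finite vector $X_t$, so that derivatives of the field along the small segments $l_1,l_2$ make sense; your $X_t$ coincides with $f_t$ on $\calW_1\cup\calW_2$, so this is a technical point rather than a gap, but it would need spelling out to carry Lemmas~\ref{l.interior} and~\ref{l.boundary} through to the Kac--Rice integral.
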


Note that the constant $C$ in Proposition~\ref{p.eps_qi} does not depend on $\eps$. Let us first show how Theorem~\ref{t.qi_rect} follows from Proposition~\ref{p.eps_qi} \textbf{in the case where the events $A$ and $B$ are generated by crossing and circuit events}. Also, \textbf{here and in all the rest of Section~\ref{s.weak-qi}, we assume that each of the $\calE_i$'s are rectangles}. The proof adapts easily to the case where the $\mathcal{E}_i$'s can also be annuli, but would be tedious to spell out.

\begin{proof}[Proof of Theorem~\ref{t.qi_rect}: Part 1 of 2, The case of crossings]
We assume that the events $A$ and $B$ are generated by crossing and circuit events. Also, we assume that each $\calE_i$ is a rectangle since the proof with annuli is exactly the same. First of all, using Lemma~\ref{l.transversality.1} and reasoning by approximation\footnote{Indeed, Lemma \ref{l.transversality.1} implies that crossing events for a given rectangle can be approximated by crossing events for approximations of this rectangle. Since $A$ and $B$ are generated by a finite boolean algebra of crossings, they can be obtained by a finite number of intersections and unions of crossings. Approximating each crossing and applying the same operations thus yields an approximation of $A$ and $B$.}, it is enough to prove the result for rectangles whose sides are integer multiples of some fixed $\eta>0$. But this is a direct consequence of Proposition~\ref{p.eps_qi} with $\eps_k=\eta/k$, with the same family of rectangles, and by taking the limit as $k$ goes to $+\infty$. Indeed, using Lemma~\ref{l.transversality.1} once more, it is easy to show that, if there is a (left-right, say) crossing of a rectangle above (resp. below) $-p$ in the continuum then a.s. there exists (a random) $\delta > 0$ such that this crossing belongs to a tube of width $\delta$ included in $\calD_p$ (resp. $\calD_p^c$). Hence, such a crossing in the continuum implies the analogous crossing in the discrete as long as $\eps_k < \delta$ and $\un_{A \setminus A^{\eps_k}}$ (resp. $\un_{B \setminus B^{\eps_k}}$) converges a.s. to $0$ as $k\rightarrow +\infty$. If there is no left-right crossing of a rectangle above (resp. below) $-p$, then (by Remark~\ref{r.planar_duality}) a.s. there is a top-bottom crossing below (resp. above) $-p$ of this rectangle so $\un_{A^{\eps_k} \setminus A}$ (resp. $\un_{B ^{\eps_k}\setminus B}$) converges a.s. to $0$ as $k\rightarrow +\infty$. Thus, we have shown Theorem~\ref{t.qi_rect} in the case where $A$ and $B$ are generated by crossing (and circuit) events.
\end{proof}
To prove Proposition~\ref{p.eps_qi}, we are going to use Proposition~\ref{p.general_formula}. We first define a Gaussian vector $X_t^\eps$ for each $t \in [0,1]$ in the spirit of the Gaussian vector $X_t$ from Proposition~\ref{p.general_formula}. Since we will apply intermediate lemmas to the underlying continuous Gaussian fields, we first define a field $f_t$ for every $t \in [0,1]$ as follows:
\begin{notation}\label{n.f_t}
Let $f$, $(\calE_i)_{1\leq i\leq k_1+k_2}$, $\calK_1$, $\calK_2$, $\calC_1$ and $\calC_2$ be as in Proposition~\ref{p.eps_qi}. Let $\calU_1$ and $\calU_2$ be disjoint neighborhoods of $\calK_1$ and $\calK_2$ respectively. Let $g$ be a continuous Gaussian field indexed\footnote{The reason we extend $g$ to open neighborhoods of $\calK_1$ and $\calK_2$ is largely technical and can be ignored during first reading.} by $\calU_1 \cup \calU_2$ independent of $f$ such that $g$ restricted to either of the $\calU_i$'s has the same law as $f$ restricted to $\calU_i$ and such that $g$ restricted to $\calU_1$ is independent of $g$ restricted to $\calU_2$. For each $t\in[0,1]$, let $f_t = \sqrt{t}f+\sqrt{1-t}g$. Note that (since $f$ is centered and non-degenerate) for each $t\in[0,1]$, $f_t$ is a non-degenerate centered Gaussian field whose covariance function is:
\[
\begin{cases}
\E \left[ f_t(x) f_t(y) \right] = \kappa(x-y) & \text{ if } x,y \in \calU_1 \text{ or } x,y \in \calU_2 \, ,\\
\E \left[ f_t(x) f_t(y) \right] = t\kappa(x-y) & \text{ otherwise} \, .
\end{cases}
\]
Also, for each $i \in \{ 1,2 \}$, let $\calV_i^\eps = \calK_i \cap \calV^\eps$, and let $X^\eps$ (resp. $X_t^\eps$) be $f$ (resp. $f_t$)  restricted to $\calV^\eps_1 \cup \calV_2^\eps$.
\end{notation}

We need one last notation before beginning the proof:
\begin{notation}\label{n.UandV}
Given $\eps$, $p$, $(\calE_i)_{1\leq i\leq k_1+k_2}$, $A^\eps$ and $B^\eps$ as in Proposition~\ref{p.eps_qi}, we write $\calV^\eps_1$ and $\calV_2^\eps$ as in Notation~\ref{n.f_t} and we write $U^\eps$ and $V^\eps$ for the corresponding Borelian subsets of $\R^{\calV^\eps_1 \cup \calV^\eps_2}$ i.e. the elements of the Boolean algebra generated by the sets $\{ x_i \geq -p \}$ for any $i \in \calV^\eps_1 \cup \calV^\eps_2$ such that:
\[
A^\eps = \{ X^\eps \in U^\eps \} \text{ and } B^\eps = \{ X^\eps \in V^\eps \} \, .
\]
\end{notation}

Let us now start the proof of Proposition~\ref{p.eps_qi}. By applying Proposition~\ref{p.general_formula} to $X^\eps$ (which is centered, normalized and non-degenerate since $f$ is centered, normalized and non-degenerate), $U^\eps$ and $V^\eps$, it is sufficient to prove that there exists $C=C(\kappa)<+\infty$ and $d=d(\kappa)<+\infty$ such that, if $\calK_1$ and $\calK_2$ are at distance greater than $d$ then for each $t \in [0,1]$ we have:
\begin{multline}\label{e.sufficient}
\sum_{x\in\calV^\eps_1, \atop y\in\calV^\eps_2} \prob \left[ X_t^\eps \in \piv_x(U^\eps)\cap\piv_y(V^\eps) \cond f_t(x)=f_t(y)=-p\right]\\ \leq C \, (1+|p|)^4\, e^{-p^2} \, \prod_{i=1}^2\left(\ar(\calK_i)+\leng(\calC_i)+k_i\right) \, .
\end{multline}

To prove~\eqref{e.sufficient}, we need to find good enough bounds for the probabilities of pivotal events.
This is the purpose of Subsections~\ref{ss.pivo_excep} and~\ref{ss.Kac_Rice}. The proof sketch provided in the introduction can be a useful guide to read the following subsections. Remember also that we have assumed that all of the $\calE_i$'s are rectangles.

\subsection{Pivotal sites imply exceptional geometric events}\label{ss.pivo_excep}

In this subsection, we fix a point $x$ on the $\eps$-lattice and explain how the fact that $x$ is pivotal for the discretized event $U^\eps$ implies the cancellation of certain derivatives of the field. The results are combined in three lemmas that we state together before proving them for future reference. Each proof is independent from the rest.\\
In the first lemma, we show that, roughly speaking, on the neighbors of a pivotal point $x$, the field must have alternating signs relative to $p$.
\begin{lemma}\label{l.qi.piv_to_geometry}
We use the same notations as in Notation~\ref{n.UandV} (remember in particular that $\calK_1 = \cup_{i=1}^{k_1} \calE_i$ and $\calC_1 = \cup_{i=1}^{k_1} \partial \calE_i$). Let $x \in \calV_1^\eps$, let $\omega^\eps \in \Piv_x(U^\eps) \subseteq \R^{\calV^\eps_1 \cup \calV^\eps_2}$ and call black (resp. white) a vertex $y \in \calV^\eps_1 \cup \calV^\eps_2$ such that $\omega^\eps(y) \geq -p$ (resp. $\omega^\eps(y) < -p$). If the point $x$ belongs to $\calK_1 \setminus \calC_1$, then it has four neighbors $x_1,x_2,x_3,x_4$ in anti-clockwise order around $x$ and of alternating color. If the point $x$ belongs to $\calC_1$ and is the corner of none of the $\calE_i$'s, then $x$ has three neighbors $x_1,x_2,x_3$ in anti-clockwise order around $x$ belonging to a common half-plane bounded by a line through $x$ and of alternating color.
\end{lemma}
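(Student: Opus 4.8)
\textbf{Proof plan for Lemma~\ref{l.qi.piv_to_geometry}.}

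The plan is to unwind the definition of $\Piv_x(U^\eps)$ together with the combinatorial structure of the crossing/circuit events, following the classical pivotality dichotomy from planar percolation. Recall that $\omega^\eps \in \Piv_x(U^\eps)$ means there are two values $y_1, y_2 \in \R$ such that flipping the $x$-coordinate of $\omega^\eps$ to $y_1$ puts the configuration in $U^\eps$ while flipping it to $y_2$ puts it outside $U^\eps$. Since $U^\eps$ is a threshold event generated by the sets $\{x_i \geq -p\}$, the only thing about the $x$-coordinate that matters is whether it is $\geq -p$ (black) or $<-p$ (white), so pivotality of $x$ is equivalent to: the configuration obtained by coloring $x$ black lies in $U^\eps$, and the one obtained by coloring $x$ white does not (or vice versa). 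Now $U^\eps$ is built by finitely many unions and intersections from the crossing and circuit events of the rectangles $\calE_i$ with $i \le k_1$; since complement, union and intersection only shrink the pivotal set in the sense of Remark~\ref{r.piv}, $x$ pivotal for $U^\eps$ forces $x$ to be pivotal for at least one individual $\eps$-crossing or $\eps$-circuit event of some $\calE_i$ with $i \le k_1$. In particular $x \in \calE_i \subseteq \calK_1$, consistent with the hypothesis, and it suffices to analyze a single crossing (or circuit) event.

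First I would treat the interior case $x \in \calK_1 \setminus \calC_1$. Fix the single $\eps$-crossing event $E$ (say a left-right crossing of $\calE_i$ above $-p$; the below case is dual via Remark~\ref{r.eps_duality}, and the circuit case is identical in spirit) for which $x$ is pivotal, and note $x$ is an interior vertex of $\calE_i$ on the face-centered square lattice $\calT^\eps$, hence has exactly four neighbors $x_1, x_2, x_3, x_4$ listed in anticlockwise cyclic order. Standard planar-percolation reasoning on a triangulation: if coloring $x$ black creates a black left-right crossing through $x$ but coloring $x$ white destroys it, then among the neighbors there must be two black neighbors lying on ``opposite sides'' of $x$ relative to the would-be crossing path (so that $x$ black connects them), and dually, since flipping $x$ to white must create a white top-bottom crossing through $x$ (here I invoke Remark~\ref{r.eps_duality} to phrase ``not a black left-right crossing'' as ``a white top-bottom crossing''), there must be two white neighbors lying on opposite sides of $x$ relative to that white path. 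Because the black pair and the white pair must interleave cyclically around $x$ (the two crossing directions are transverse), the four neighbors $x_1,\dots,x_4$ in anticlockwise order alternate in color: $x_1, x_3$ of one color, $x_2, x_4$ of the other. This is exactly the claimed conclusion.

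Next, the boundary (non-corner) case $x \in \calC_1$. Now $x$ lies on an edge of the rectangle $\calE_i$ but is not one of its four corners, so $x$ has exactly three neighbors inside $\calE_i$ (the fourth lattice neighbor lies outside), and these three neighbors $x_1, x_2, x_3$, taken in anticlockwise order, all lie in the closed half-plane bounded by the line supporting that side of $\calE_i$ through $x$. The pivotality analysis is the same as before but restricted to this half-disk of neighbors: $x$ pivotal for the left-right crossing event means coloring $x$ black joins two black neighbors into (a piece of) the crossing while coloring $x$ white does not, which — since with only three neighbors the only way the colors can be arranged to make $x$ genuinely pivotal is to have the two extreme neighbors $x_1, x_3$ one color and the middle neighbor $x_2$ the opposite color (if all three agreed or only one disagreed at an end, $x$ would not be pivotal) — gives three neighbors of alternating color in anticlockwise order in a common half-plane. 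I would spell out the small finite case-check (there are only $2^3$ colorings of three neighbors, and one verifies directly which ones make $x$ pivotal for a crossing, using duality for the complementary direction) to confirm that ``alternating'' is the only possibility. The main obstacle, and the only place that needs genuine care rather than bookkeeping, is making the ``opposite sides of $x$ relative to the crossing path'' argument precise on the face-centered lattice and checking that the black and white neighbor pairs are forced to interleave; this is where one really uses that $\calT^\eps$ is a triangulation (so that a path and its dual path cannot cross without sharing a vertex) and that $\calE_i$ is an $\eps$-drawn rectangle whose sides are lattice paths, so Remark~\ref{r.eps_duality} applies verbatim.
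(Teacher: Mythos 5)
Your overall strategy matches the paper's: use Remark~\ref{r.piv} to reduce $\Piv_x(U^\eps)$ to pivotality for a single crossing or circuit event of one rectangle $\calE_{i_0}$, then argue via arms to the four sides that the colors alternate around $x$. The interior case is essentially the paper's argument, and the reduction step is correct.

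There is, however, a genuine gap in your boundary case. You write ``Now $x$ lies on an edge of the rectangle $\calE_i$ but is not one of its four corners,'' but $x \in \calC_1 = \bigcup_{i \le k_1} \partial\calE_i$ does \emph{not} imply $x \in \partial\calE_{i_0}$, where $\calE_{i_0}$ is the specific rectangle whose crossing event $x$ is pivotal for. When the rectangles $\calE_i$ overlap, $x$ can lie on $\partial\calE_j$ for some $j \neq i_0$ while sitting strictly inside $\calE_{i_0}$. In that subcase the four-arm (interior) picture applies, giving four alternating-color neighbors around $x$, and one must then \emph{extract} from them three in anticlockwise order lying in a common half-plane through $x$. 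The paper treats this subcase explicitly and separately from the $x \in \partial\calE_{i_0}$ subcase; your proposal silently assumes $x \in \partial\calE_{i_0}$ and so does not prove the stated conclusion for all $x \in \calC_1$.

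Two smaller inaccuracies that are worth fixing but do not sink the argument: on the face-centered square lattice, the $\Z^2$-type vertices (the only ones that can lie on $\partial\calE_{i_0}$ since $\calE_{i_0}$ is $\eps$-drawn) have degree $8$, not $4$, so ``exactly four neighbors'' in the interior case and ``exactly three neighbors inside $\calE_i$'' in the boundary case are both false; the lemma only asserts the \emph{existence} of four (resp.\ three) suitably placed neighbors, not that these are all of them. Relatedly, your proposed $2^3$ case-check for the boundary case is under-specified: with more than three neighbors inside, pivotality is a global connectivity condition, and the clean argument (which the paper uses) is the three-arm one — one arm of one color to the opposite side, two arms of the other color to the two adjacent sides — rather than enumerating local colorings.
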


In the last two lemmas, we explain how the information obtained in Lemma~\ref{l.qi.piv_to_geometry} implies the cancellation of certain derivatives of the field on fixed segments. The arguments are entirely deterministic.

\begin{lemma}\label{l.boundary}
Consider $\varphi\in C^1(\R^2)$ and $x,x_1,x_2,x_3\in\R^2$. Assume that any two distinct vectors $x-x_i$ for $i=1,2,3$ do not point in the same direction and that the $x_i$ are numbered in anti-clockwise order around $x$. Assume that
\begin{itemize}
\item We have $\varphi(x)=0$, $\varphi(x_1),\varphi(x_3) \geq 0$ and $\varphi(x_2)\leq 0$.
\item There is a closed half plane $H$ such that $x\in\partial H$ and $x_1,x_2,x_3\in H$.
\end{itemize}
Then, there exists $i\in\{1,2,3\}$ such that if $l=[x,x_i]$ has tangent vector $v$, $\partial_v\varphi$ has a zero on $l$.
\end{lemma}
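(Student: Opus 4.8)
\textbf{Proof plan for Lemma~\ref{l.boundary}.}

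The plan is to argue by contradiction: suppose that for every $i \in \{1,2,3\}$, the directional derivative $\partial_v \varphi$ (where $v$ is the tangent direction of the segment $l = [x, x_i]$) has no zero on $l$, and derive a contradiction with the sign information and the geometry. First I would set up coordinates so that $x$ is the origin and the half-plane $H$ is the closed upper half-plane $\{y_2 \geq 0\}$, so that $\partial H$ is the horizontal axis and $x_1, x_2, x_3$ all lie in the closed upper half-plane. Since the $x_i$ are in anti-clockwise order around $x$ and no two of the directions $x - x_i$ coincide, and since all three lie in $H$, their angular arguments (measured in $[0, \pi]$) are strictly ordered; after relabeling we may assume $0 \leq \theta_1 < \theta_2 < \theta_3 \leq \pi$, where $\theta_i = \arg(x_i)$. (One must be a little careful about whether the anti-clockwise ordering around $x$ forces $\theta_1 < \theta_2 < \theta_3$ or the reverse, but this is just a matter of matching conventions; the point is that $x_2$ is angularly between $x_1$ and $x_3$.)

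Next I would use the no-zero hypothesis to get monotonicity along each segment. For a fixed $i$, parametrize $l = [x, x_i]$ by $t \mapsto t x_i$, $t \in [0,1]$; then $\frac{d}{dt}\varphi(t x_i) = |x_i| \, \partial_v \varphi(t x_i)$, which by assumption never vanishes on $[0,1]$, hence has constant sign. Since $\varphi(x) = \varphi(0) = 0$, the sign of $\varphi(x_i) = \varphi(1 \cdot x_i)$ determines this sign: $\partial_v \varphi > 0$ along $[x, x_i]$ if $\varphi(x_i) > 0$, and $\partial_v \varphi < 0$ if $\varphi(x_i) < 0$. The borderline case $\varphi(x_i) = 0$ needs separate handling — if $\varphi(x_i)=0$ while $\varphi$ is strictly monotone on $[x,x_i]$ with $\varphi(x)=0$, that is an immediate contradiction, so in fact we may assume all the relevant $\varphi(x_i)$ are strictly signed, i.e. $\varphi(x_1), \varphi(x_3) > 0$ and $\varphi(x_2) < 0$. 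So: $\partial_v\varphi$ points "uphill away from $x$" along $[x,x_1]$ and $[x,x_3]$, and "downhill away from $x$" along $[x,x_2]$.

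The contradiction then comes from looking at the gradient of $\varphi$ at $x = 0$. Writing $w = \nabla\varphi(0)$, the directional derivative at $x$ in direction $u_i := x_i/|x_i|$ is $\langle w, u_i\rangle$, which equals $\lim_{t\to 0^+}\frac{d}{dt}\varphi(tx_i)/|x_i|$, hence has the same sign as $\partial_v\varphi$ along $[x,x_i]$ near $x$, which is the sign of $\varphi(x_i)$. Thus $\langle w, u_1\rangle \geq 0$, $\langle w, u_3\rangle \geq 0$, and $\langle w, u_2\rangle \leq 0$ (one gets weak inequalities a priori, but combined with the strict monotonicity one upgrades appropriately, or one notes $w \neq 0$ since otherwise $\partial_v\varphi(x) = 0$ contradicts constant nonzero sign near $x$). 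The key geometric fact is now: if $u_1, u_2, u_3$ are unit vectors in the closed upper half-plane with $u_2$ lying angularly strictly between $u_1$ and $u_3$, then the set $\{u : \langle w, u\rangle \geq 0\}$ for a fixed nonzero $w$ is a closed half-plane through the origin, i.e. its intersection with the unit circle is a closed semicircle (arc of length $\pi$); since $u_2$ is between $u_1$ and $u_3$ on an arc of total angular length $\leq \pi$ (as all three are in the upper half-plane, the arc from $u_1$ to $u_3$ not containing the lower half-plane has length $\leq \pi$), if both endpoints $u_1, u_3$ of this short arc lie in a semicircle then so does every point between them, in particular $u_2$ — forcing $\langle w, u_2\rangle \geq 0$, contradicting $\langle w, u_2\rangle < 0$ (the strict sign coming from strict monotonicity of $\varphi$ near $x$ along $[x,x_2]$). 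This contradiction completes the proof.

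\textbf{Main obstacle.} I expect the delicate point to be handling the degenerate/borderline cases cleanly: namely (a) when some $\varphi(x_i) = 0$, and (b) disentangling weak versus strict inequalities for $\langle w, u_i\rangle$ so that the final "$u_2$ in the semicircle" argument genuinely contradicts the hypothesis rather than merely being consistent with a boundary case. Both are resolved by exploiting the \emph{strict} monotonicity of $\varphi$ on each segment that follows from $\partial_v\varphi$ having constant nonzero sign, but the bookkeeping of which inequalities are strict (and the verification that the arc from $u_1$ to $u_3$ through $u_2$ really has angular length at most $\pi$, using that all three points are in a common closed half-plane with $x$ on its boundary) is the part that requires care. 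The anti-clockwise-ordering hypothesis and the "no two of the $x - x_i$ point in the same direction" hypothesis are exactly what is needed to make "$u_2$ strictly between $u_1$ and $u_3$" meaningful.
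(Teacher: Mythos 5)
Your proof is correct and rests on the same key idea as the paper's: examine $\nabla\varphi(x)$ and exploit the semicircle geometry forced by anti-clockwise ordering together with the half-plane $H$. The only difference is framing — the paper argues directly (it picks the one segment $[x,x_{i_0}]$ where the sign of $\partial_v\varphi(x)$ disagrees with the sign of $\varphi(x_{i_0})$, then invokes the intermediate value theorem on $\partial_v\varphi$), while you run the contrapositive, deducing all three sign constraints on $\langle\nabla\varphi(x),u_i\rangle$ from the ``no zero'' hypothesis and then contradicting them via the arc argument; this is a slightly longer route to the same contradiction that appears in the paper's case ii).
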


Lemma \ref{l.boundary} essentially states the following: If $x$ is a point on the boundary of the rectangle such that $\varphi(x)=0$ and such that, as one goes around $x$ along a small half circle inside the rectangle, one encounters alternating color, then, the tangent vector of the nodal line of $\varphi$ containing $x$ must take some specific values near $x$. We formalize this by saying that restrictions of $\varphi$ to certain small segments near $x$ must have critical points.
\begin{lemma}\label{l.interior}
Consider $\varphi\in C^1(\R^2)$ and $x,x_1,x_2,x_3,x_4\in\R^2$. Assume that two vectors $x-x_i$ $i=1,2,3,4$ do not point in the same direction and that the $x_i$'s are numbered in anti-clockwise order around $x$. Assume also that:
\[
\text{ We have } \varphi(x)=0, \, \varphi(x_1),\varphi(x_3)\geq 0 \text{ and } \varphi (x_2),\varphi(x_4)\leq 0 \, .
\]
Let $d_0$ denote the diameter of $\{ x,x_1,\cdots,x_4 \}$. Then, there exist a finite set $\mathfrak{V}$ of unit vectors and a constant $C_0<+\infty$ both depending only on the angles between the segments $[x,x_i]$'s such that the following holds: There exist two segments $l_1$ and $l_2$ with non-colinear unit tangent vectors $v_1,v_2 \in \mathfrak{V}$, of length at most $C_0 \, d_0$ and both passing through at least one of the points $x,x_1,\cdots,x_4$ such that $\partial_{v_1}\varphi$ has a zero on $l_1$ and $\partial_{v_2}\varphi$ has a zero on $l_2$.
\end{lemma}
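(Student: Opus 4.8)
The plan is to reduce the statement to an elementary planar geometry fact via the following observation: the four sign conditions force the nodal line $\{\varphi = 0\}$ to ``wiggle'' near $x$, and a wiggling curve must have two independent tangent directions. Concretely, first I would set up notation: let $\theta_i$ be the angle of the ray from $x$ through $x_i$, so that $\theta_1 < \theta_2 < \theta_3 < \theta_4$ (anti-clockwise, none equal), and the sign of $\varphi$ alternates $+,-,+,-$ along these rays at the points $x_i$. The segments I will use will be chosen from a finite list built out of the $x_i$'s: the six segments $[x_i,x_j]$ for $i<j$ together with the four segments $[x,x_i]$. Each of these has length at most $d_0$, hence at most $C_0 d_0$ for any $C_0 \ge 1$; and there are finitely many unit tangent directions among them, depending only on the angular configuration, which defines the set $\mathfrak V$ and lets me take $C_0$ depending only on the angles (in fact $C_0 = 1$ works for the length bound, but keeping $C_0$ flexible is harmless).

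Next, the core step: I would argue that among these segments there are two with non-colinear tangents on which a suitable directional derivative of $\varphi$ vanishes. The key sub-claim is a ``two-direction'' version of Lemma~\ref{l.boundary}. The idea is this. Consider the closed polygon (or just the union of segments) $[x_1,x_2]\cup[x_2,x_3]\cup[x_3,x_4]$. Since $\varphi(x_1),\varphi(x_3)\ge 0$ and $\varphi(x_2),\varphi(x_4)\le 0$: on $[x_1,x_2]$ and on $[x_2,x_3]$ the function $\varphi$ changes sign (or is zero at an endpoint), and on $[x_3,x_4]$ it changes sign again. If the tangent direction of $[x_1,x_2]$ is not colinear with that of $[x_3,x_4]$, then applying the intermediate value theorem to $\varphi$ restricted to each of these two segments, and then Rolle's theorem to the resulting primitive — more precisely: if $\varphi$ vanishes at two points of a segment $l$ with tangent $v$, then $\partial_v\varphi$ vanishes on $l$ — we must be slightly careful, since a single sign change only gives one zero. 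So instead I would use the following: on $[x_1,x_2]$ pick a zero $z$ of $\varphi$; then $\varphi(x)=0=\varphi(z)$, and $[x,z]$... hmm, but $[x,z]$ need not have a controlled direction. The cleaner route is: for each pair of consecutive rays on which $\varphi$ has opposite signs at $x_i, x_{i+1}$, the segment $[x,x_i]$ together with $[x,x_{i+1}]$ plus a sign analysis forces, via Lemma~\ref{l.boundary} applied to the triple $(x, x_i, x_{i+1})$ lying in a half-plane (which holds when $\theta_{i+1}-\theta_i < \pi$), the existence of a zero of some $\partial_v\varphi$ on one of $[x,x_i]$, $[x,x_{i+1}]$, $[x_i,x_{i+1}]$. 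Running this for two angularly-separated consecutive pairs (say $(x_1,x_2)$ and $(x_3,x_4)$, or $(x_2,x_3)$), whose associated candidate segments necessarily include two with non-colinear directions because the four rays are distinct, produces $l_1, l_2$ as required. The finitely many possible directions and the bound $C_0 d_0$ on lengths come for free from the construction.

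The main obstacle I anticipate is the degenerate angular configurations: if three of the four points $x_i$ happen to be nearly colinear with $x$, or if consecutive angular gaps exceed $\pi$ so that no containing half-plane exists, the direct appeal to Lemma~\ref{l.boundary} fails. I expect the paper handles this by first reducing to a ``good'' sub-configuration — e.g.\ one can always choose, among $x_1,\dots,x_4$, two pairs each contained in a half-plane through $x$ and with the alternating-sign property, because four distinct rays around $x$ can always be grouped into two opposite ``fans'' each spanning less than $\pi$; alternatively one uses the full four points $x_1,x_2,x_3,x_4$ and a single half-plane need not contain all of them, but some rotation argument selects the needed triples. Making this selection clean, and verifying that the two resulting segments genuinely have non-colinear tangents (rather than accidentally parallel ones), is where the careful bookkeeping lives; the constants $\mathfrak V$ and $C_0$ are then just recorded from that finite case analysis. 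Everything else is Rolle/IVT and is routine.
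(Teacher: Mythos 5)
Your ``cleaner route'' has a genuine gap. You propose to extract, from a single pair $x_i, x_{i+1}$ of consecutive neighbours carrying opposite signs, a vanishing directional derivative of $\varphi$ on one of $[x,x_i]$, $[x,x_{i+1}]$, $[x_i,x_{i+1}]$, by appealing to Lemma~\ref{l.boundary} ``applied to the triple $(x,x_i,x_{i+1})$.'' This does not match the hypotheses of Lemma~\ref{l.boundary}, which needs a centre plus \emph{three} further points with alternating signs $(+,-,+)$ in a half-plane; you are feeding it only two further points with signs $(+,-)$. More importantly, the conclusion you want from a pair alone is simply false: take $x=0$, $x_1=(1,0)$, $x_2=(0,1)$ and $\varphi(y_1,y_2)=y_1-y_2$. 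Then $\varphi(x)=0$ and $\varphi(x_1)>0>\varphi(x_2)$, yet $\partial_v\varphi$ is a nonzero constant in the direction of each of $[x,x_1]$, $[x,x_2]$, $[x_1,x_2]$. A single sign change between consecutive neighbours occurs at every regular nodal point and is not exceptional; what is exceptional is the full alternation $(+,-,+,-)$ around $x$, which no affine model can realize, and a pairwise decomposition discards exactly this global constraint.

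Even if you repair this by feeding Lemma~\ref{l.boundary} genuine alternating triples such as $(x_1,x_2,x_3)$ and $(x_3,x_4,x_1)$ (selected to lie in half-planes through $x$, which is always possible as you note), a second issue remains: each application returns a zero on some $[x,x_j]$, and nothing prevents the two returned segments from being colinear, e.g.\ $[x,x_1]$ and $[x,x_3]$ when $x_1,x,x_3$ are aligned --- a configuration not excluded by the lemma's hypotheses. The paper avoids both problems by first fixing $\nabla\varphi(x)$ (the case $\nabla\varphi(x)=0$ being trivial) and using the sign of the directional derivative \emph{at $x$} supplied by its half-planes $H_\pm$ --- a piece of information the pointwise signs at the $x_i$ alone cannot give --- to pin down a first segment $L_1$ on which a derivative vanishes; it then runs a case analysis on the positions of the remaining $x_j$ relative to $H_\pm$, introducing in two sub-cases the extra segments $\widetilde{L}_i$ (a chord $[x_{i-1},x_{i+1}]$) and $\widehat{L}_i$ (a segment through $x_i$ orthogonal to an angular bisector), whose directions are not among your proposed ten segments, so the finite set $\mathfrak{V}$ it produces is strictly larger than yours. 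The gradient-based case split is therefore not optional bookkeeping: it both supplies the derivative-sign input that actually forces a vanishing and tracks directions explicitly to guarantee non-colinearity.
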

Lemma \ref{l.interior} roughly says that if $\varphi$ changes signs four times when going around $x$ along a small circle, then it must have an approximate saddle point at $x$. We formalize the notion of approximate saddle point by saying that there are two non-colinear segments of length $\eps$ on which the function $\varphi$ has a vanishing derivative. In the proof we distinguish several cases depending  on the relative positions of the $x_i$'s and the gradient of $\varphi$ at $x$. This reduces the proof to a planar euclidean geometry problem.
\begin{proof}[Proof of Lemma~\ref{l.qi.piv_to_geometry}]
By Remark~\ref{r.piv} we may assume that there exists $i_0\in\{1,\dots,k_1\}$ such that $U^\eps$ is the Borelian subset of $\R^{\calV^\eps_1 \cup \calV^\eps_2}$ which corresponds to the left-right crossing of $\calE_{i_0}$. If $x \notin \calE_{i_0}$ then $\Piv_x(U^\eps)$ is empty. If $x \in \calE_{i_0} \setminus\partial\calE_{i_0}$ and $\omega^\eps \in \Piv_x(U^\eps)$, then there are two paths made of black vertices connecting $x$ to left and right sides of $\calE_{i_0}$ and two white paths made of white vertices connecting $x$ to the top and bottom sides of $\calE_{i_0}$. These paths are necessarily of alternating color around $x$, so in particular it has four neighbors of alternating color. This proves the first assertion. Let $x\in \calC_1 \cap \calE_{i_0}$ such that $x$ is not a corner. If $x\notin \partial\calE_{i_0}$ then, as before, $x$ must have four neighbors of alternating color. But then among these, there must be three neighbors belonging to the same half-space bounded by $x$ with the properties required by the second assertion. On the other hand, if $x\in\partial\calE_{i_0}$, then there must be a path of one color starting at a neighbor of $x$ and reaching the opposite side of the rectangle and two additional paths of the opposite color connecting neighbors of $x$ to each of the adjacent sides to the one containing $x$. But then, the three neighbors at which these paths start are in the configuration announced by the second assertion.
\end{proof}

\begin{proof}[Proof of Lemma \ref{l.boundary}]
See Figure~\ref{f.lines} (a) for a snapshot of the proof. If $\nabla \varphi(x)=0$ then the result is trivial so assume that $\nabla \varphi(x) \neq 0$. Then, this gradient separates the plane into two closed half-spaces $H_+$ and $H_-$ such that $x \in \partial H_+ = \partial H_-$,  $\nabla \varphi(x)$ is orthogonal to this boundary, and $\nabla \varphi(x)$ points toward $H_+$. We distinguish between two cases: i) There exists $i_0 \in \{ 1,3 \}$ such that $x_{i_0} \in H_-$. In this case, let $l=[x,x_{i_0}]$ with unit vector $v$. Then, $\partial_v \varphi (x)\leq 0$, $ \varphi (x)=0$ and $\varphi(x_{i_0}) \geq 0$. Therefore, $\partial_vf$ must vanish somewhere on $l$. ii) The point $x_2$ belongs to $H_+$ (which happens if the case i) does not hold by the existence of the half-plane $H$ and since the $x_i$'s are in anti-clockwise order around $x$). In this case, the same argument works with $l=[x,x_2]$.
\end{proof}

\begin{figure}[!h]
\begin{center}
\includegraphics[scale=0.36]{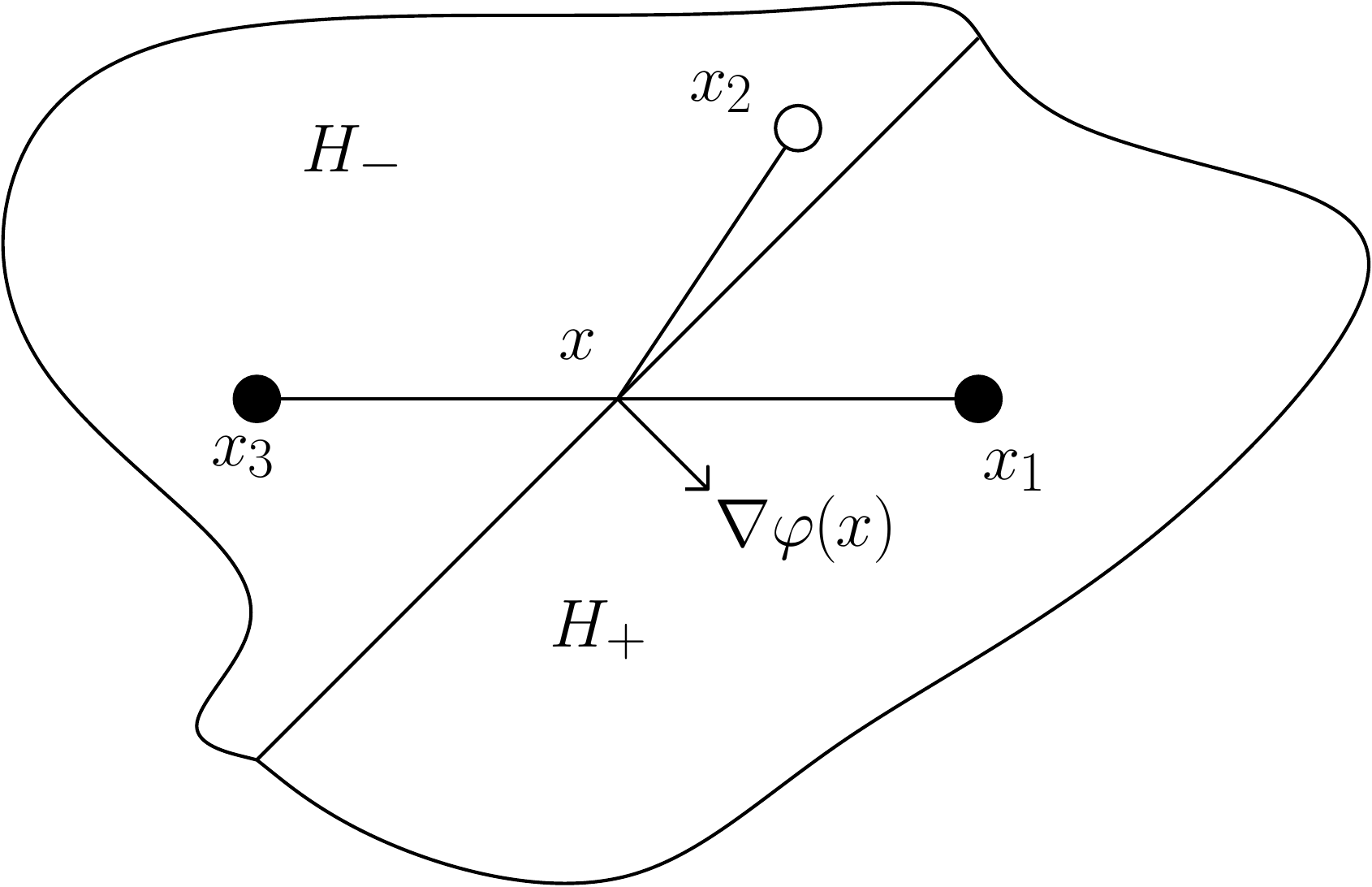}
\hspace{1em}
\includegraphics[scale=0.36]{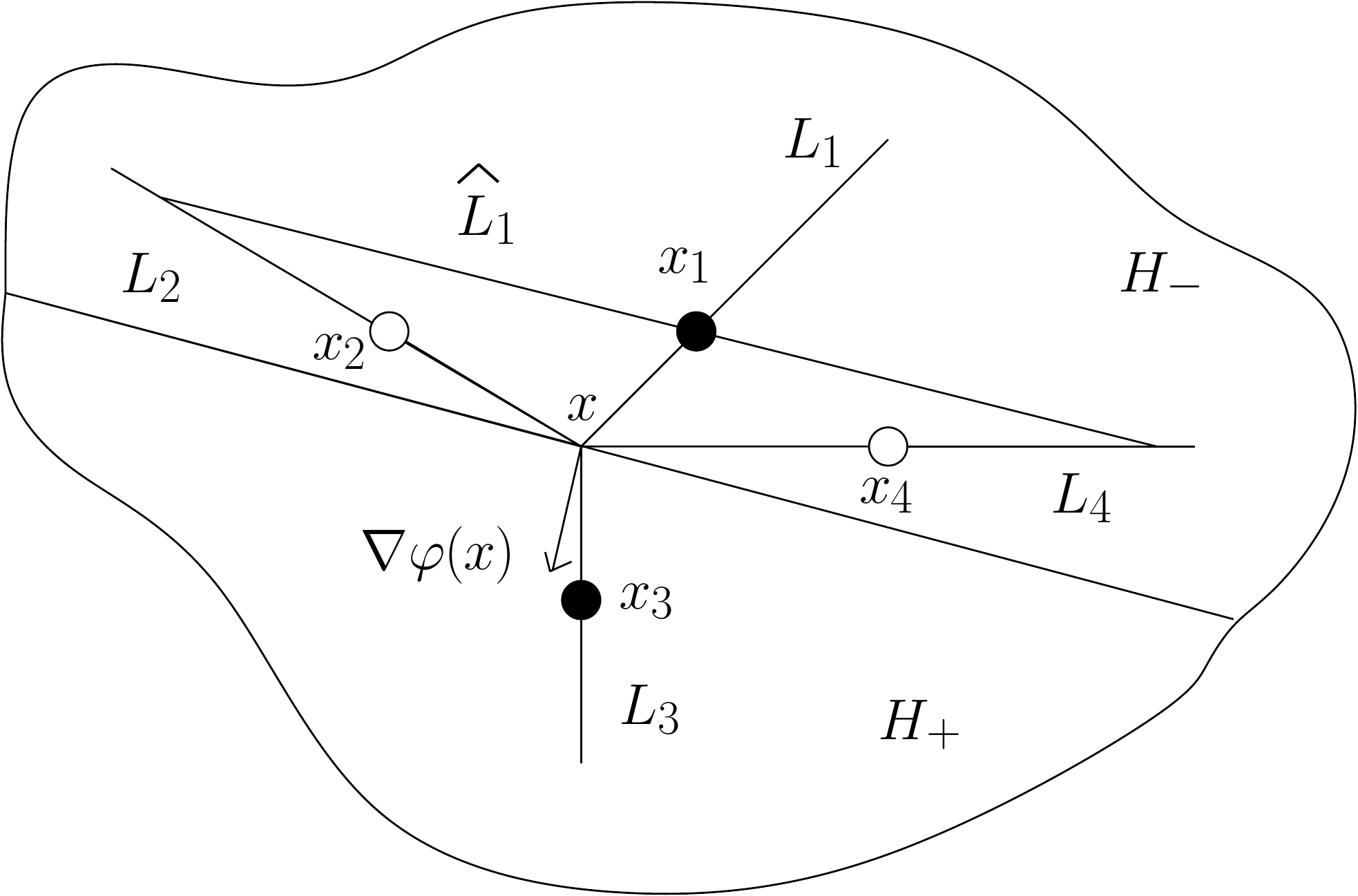}
\end{center}
\caption{(a) The proof of Lemma~\ref{l.boundary}, more particularly the case i) with $i_0=3$. (b) The proof of Lemma~\ref{l.interior}, more particularly the case ii).}\label{f.lines}
\end{figure}
\begin{proof}[Proof of Lemma~\ref{l.interior}]
See Figure~\ref{f.lines} (b) for an illustration of the proof. For each $i\in\{1,2,3,4\}$, let $L_i$ be the line $[x,x+C_0(x-x_i)]$ for some $C_0>0$ to be chosen later. If the anti-clockwise angle $\theta_i$ between $L_{i-1}$ and $L_{i+1}$ is less than $\pi$ (the indices should be read modulo $4$), set $\widetilde{L}_i:=[x_{i-1},x_{i+1}]$ and define $\widehat{L}_i$ to be the segment intersecting the bisector of $\theta_i$ orthogonally at $x_i$ and whose extremities belong to $L_{i-1}$ and $L_{i+1}$. We fix $C_0$ large enough so that whenever $\theta_i$ is indeed less than $\pi$, $L_i$ is long enough to intersect $\widetilde{L}_i$. We will choose $l_1$ and $l_2$ among the $L_i$'s, the $\widehat{L}_i$'s and the $\widetilde{L}_i$'s. The choice will follow by considering several distinct cases. In each case, the critical point will be detected either by finding three consecutive points on the segment on which $\varphi$ takes alternating signs, or by finding a point on the segment where $\varphi$ vanishes and has, say, a positive derivative, and proving that $\varphi$ takes a negative value further along the segment. In both cases, the existence of the critical point follows by Rolle's theorem.\\

As in the proof of Lemma~\ref{l.boundary}, note that if $\nabla \varphi(x)=0$ then the result is trivial so assume that $\nabla \varphi(x) \neq 0$. Then, this gradient separates the plane into two closed half-spaces $H_+$ and $H_-$ such that $x \in \partial H_+ = \partial H_-$,  $\nabla \varphi(x)$ is orthogonal to this boundary, and $\nabla \varphi(x)$ points toward $H_+$. Note that there are at least two consecutive points among the $x_i$'s in $H_-$ or two consecutive points in $H_+$, such that they do not both belong to $\partial H_- = \partial H_+$. Without loss of generality, assume that $x_1,x_2 \in H_-$ and that they do not belong both to $\partial H_-$. Then, along the segment $L_1$, $\varphi$ starts at $x$ with value $0$ and a non-positive derivative and $\varphi(x_1)\geq 0$. In particular, its derivative along this segment must vanish. We now distinguish between two cases:
\begin{itemize}
\item Assume that there exists $i\in\{2,3,4\}$ with $x_i\in H_-$ such that, first, $x_1$ and $x_i$ are not both on $\partial H_-$, and second, $f(x')\geq 0$ for some $x'\in L_i$. Then $\{ l_1,l_2 \} = \{ L_1,L_i \}$ satisfies the required conditions (indeed, with the same argument as for $L_1$, the derivative of $\varphi$ vanishes along $L_i$).
\item Otherwise, since $\varphi(x_3) \geq 0$, then on the one hand $x_3$ necessarily belongs to $H_+$ (possibly on its common boundary with $H_-$) and on the other hand $\varphi$ is necessarily negative on $L_2$. We distinguish between four subcases: (a) Assume that $x_4-x$ points in the direction opposite to $x_1-x$ and that there exists $x' \in L_3$ such that $\varphi(x') \leq 0$. Then $L_3$ is not colinear to $L_1$ and $\{ l_1,l_2 \} = \{ L_1,L_3 \}$ satisfies the required conditions. (b) Assume that $x_4-x$ points in the direction opposite to $x_1-x$ and that there is no $x' \in L_3$ such that $\varphi(x') \leq 0$. Then, the anticlockwise angle $\theta_3$ between by $L_2$ and $L_4$ is less than $\pi$. Let $x'$ be the intersection of $\tilde{L}_3$ with $L_3$. Then, $\varphi(x_4)\leq 0$, $\varphi(x_2)\leq 0$ and $\varphi(x')\geq 0$ since $x'\in L_3$ so $\{ l_1,l_2 \} = \{ L_1,\widetilde{L}_{3} \}$ satisfies the required conditions (in particular the two segments are not colinear). (c) Assume now that $x_4-x$ does not point in the opposite direction to $x_1-x$ and that either $x_4\in H_+$ or $x_4 \notin H_+$ and there is $x' \in L_4$ such that $\varphi(x') \geq 0$ then, as before, one can consider $\{l_1,l_2\}=\{L_1,L_4\}$. (d) Assume finally that $x_4-x$ does not point in the opposite direction to $x_1-x$, that $x_4\notin H_+$ and that there is no $x' \in L_4$ such that $\varphi(x') \geq 0$. Then, the anti-clockwise angle $\theta_1$ between $L_4$ and $L_2$ is less than $\pi$ and one can consider $\{ l_1,l_2 \} = \{ L_1,\widehat{L}_1 \}$. Indeed, remember that $\varphi$ is negative on $L_2$. Finally, $\widehat{L}_1$ goes through $x_1$ at which $\varphi$ is non-negative, and $\varphi$ is negative at both ends of $\widehat{L}_1$.
\end{itemize}
This completes the proof.
\end{proof}

\subsection{End of the proof of Proposition~\ref{p.eps_qi} via Kac-Rice estimates}\label{ss.Kac_Rice}

In this subsection we use results from Subsection~\ref{ss.pivo_excep} and Kac-Rice estimates to prove Proposition~\ref{p.eps_qi}. The only remaining step is the following proposition:
\begin{prop}\label{p.kac-rice}
Let $f$ be as in the statement of Proposition~\ref{p.eps_qi}. We use Notations~\ref{n.f_t} and~\ref{n.UandV}. There exist $C_1=C_1(\kappa)<+\infty$, $d_1=d_1(\kappa)<+\infty$ and $\eps_0=\eps_0(\kappa) \in ]0,1]$ such that, for all $p\in\R$ and $t\in [0,1]$, if $\eps \in ]0,\eps_0]$ and if $x \in \calV_1^\eps, \, y \in \calV^\eps_2$ are such that $|x-y| \geq d_1$ then:
\begin{itemize}
\item If neither $x \notin \calC_1$ nor $y \notin \calC_2$ then
\[
\prob  \left[X_t\in\piv_x(U^\eps)\cap \piv_y(V^\eps) \cond X_t(x)=X_t(y)=-p \right]\leq C_1(1+|p|)^4\eps^4\, .
\]
\item If among $x$ and $y$ one does not belong to $\calC_1 \cup \calC_2$ and the other belongs to $\calC_1 \cup \calC_2$ but is the corner of none of the $\calE_i$'s then:
\[
\prob \left[X_t\in\piv_x(U^\eps)\cap \piv_y(V^\eps) \cond X_t(x)=X_t(y)=-p \right]\leq C_1(1+|p|)^3\eps^3\, .
\]
\item If $x$ and $y$ both belong to $\calC_1 \cup \calC_2$ but are the corner of none of the $\calE_i$'s or if at least one of them does not belong to $\calC_1 \cup \calC_2$ then:
\[
\prob \left[X_t\in\piv_x(U^\eps)\cap \piv_y(V^\eps) \cond X_t(x)=X_t(y)=-p \right]\leq C_1(1+|p|)^2\eps^2\, .
\]

\item If $x$ or $y$ belongs to $\calC_1 \cup \calC_2$ but is the corner of none of the $\calE_i$'s then:
\[
\prob \left[X_t\in\piv_x(U^\eps)\cap \piv_y(V^\eps) \cond X_t(x)=X_t(y)=-p \right]\leq C_1(1+|p|)\eps\, .
\]
\end{itemize}
\end{prop}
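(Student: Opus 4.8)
The plan is to combine the deterministic lemmas of Subsection~\ref{ss.pivo_excep} with a multivariate Kac--Rice estimate. First I would reduce to the case where $U^\eps$ and $V^\eps$ are single crossing or circuit events: by Remark~\ref{r.piv} and an induction on the Boolean operations building $U^\eps$ (resp.\ $V^\eps$), one has $\piv_x(U^\eps)\subseteq\bigcup_W\piv_x(W)$, with $W$ ranging over the finitely many crossing/circuit events attached to the $\calE_i$ with $i\le k_1$ (resp.\ $i>k_1$), so it is enough to bound each conditional probability $\prob[X_t\in\piv_x(W)\cap\piv_y(W')\mid X_t(x)=X_t(y)=-p]$. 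Fix such $W,W'$ and this conditioning, and put $\varphi_t:=f_t+p$, so that $\varphi_t(x)=\varphi_t(y)=0$ on the conditioning event and $\varphi_t$ is a.s.\ $C^1$ (indeed $C^2$, since $\kappa\in C^8$). If $x\notin\calC_1$, then on $\{X_t\in\piv_x(W)\}$ Lemma~\ref{l.qi.piv_to_geometry} gives four $\calT^\eps$-neighbours of $x$ of alternating colour, lying in the finitely many lattice directions with no two coinciding, and Lemma~\ref{l.interior} (applied to $\varphi_t$, with $d_0=O(\eps)$) then yields two segments $l_1,l_2$ of length $O(\eps)$, each through a lattice point within $O(\eps)$ of $x$, with non-colinear unit tangents $v_1,v_2$ from a finite lattice-dependent set, on which $\partial_{v_1}f_t$ resp.\ $\partial_{v_2}f_t$ vanishes. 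If $x\in\calC_1$ is the corner of none of the $\calE_i$'s, Lemmas~\ref{l.qi.piv_to_geometry} and~\ref{l.boundary} similarly produce one such segment $l_1$; if $x$ is a corner I extract nothing from the $x$-side. The same dichotomy applies at $y$. Unioning over the $O(1)$ admissible choices of segments, the quantity to bound becomes a sum of $O(1)$ terms
\[
\prob\Big[\textstyle\bigcap_{k=1}^m\{\partial_{v_k}f_t\text{ has a zero on }l_k\}\ \Big|\ f_t(x)=f_t(y)=-p\Big],
\]
where $m\in\{1,2,3,4\}$ is the total number of segments produced, the segments clustered near $x$ (resp.\ near $y$) have pairwise non-colinear tangents, and $|x-y|\ge d_1$.

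Next I would bound each such term by Kac--Rice. The event forces an $m$-tuple $(s_1,\dots,s_m)\in l_1\times\cdots\times l_m$ with $\partial_{v_k}f_t(s_k)=0$ for all $k$, so the conditional probability is at most the conditional expectation, given $f_t(x)=f_t(y)=-p$, of the number of such tuples; by the Kac--Rice formula for the $\R^m$-valued Gaussian field $(s_1,\dots,s_m)\mapsto(\partial_{v_k}f_t(s_k))_k$ (licit because $f_t$ is a.s.\ $C^2$ with a.s.\ non-degenerate first derivatives), this equals
\[
\int_{l_1}\!\!\cdots\!\!\int_{l_m}\E\Big[\textstyle\prod_{k=1}^m\big|\partial^2_{v_k}f_t(s_k)\big|\ \Big|\ \partial_{v_k}f_t(s_k)=0\ \forall k,\ f_t(x)=f_t(y)=-p\Big]\,\rho_t(s_1,\dots,s_m)\,ds_1\cdots ds_m,
\]
$\rho_t$ being the density at $0$ of $(\partial_{v_k}f_t(s_k))_k$ conditioned on $f_t(x)=f_t(y)=-p$. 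Two facts then close the estimate. First, this conditional law is Gaussian with bounded mean (its cross-covariance with $f_t(x)$ is $O(\eps)$, since $\nabla\kappa(0)=0$ and $\kappa\in C^2$, and with $f_t(y)$ uniformly small since $|x-y|\ge d_1$) and with covariance matrix bounded above and bounded away from degeneracy uniformly in $\eps,t$ and the segment choice, whence $\rho_t\le C$. Second, conditionally on $\{\partial_{v_k}f_t(s_k)=0\ \forall k\}\cap\{f_t(x)=f_t(y)=-p\}$, each $\partial^2_{v_k}f_t(s_k)$ is Gaussian with variance $O(1)$ and mean $O(1+|p|)$ (a bounded multiple of $p$, coming from conditioning the field values to $-p$); by Hölder's inequality and Gaussian moment bounds the conditional expectation of the product of their absolute values is $\le C(1+|p|)^m$. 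Since $l_1\times\cdots\times l_m$ has volume $O(\eps^m)$, the term is $\le C(1+|p|)^m\eps^m$. Summing over the $O(1)$ segment choices and reading off $m=4$ when $x\notin\calC_1$ and $y\notin\calC_2$, $m=3$ when one of them is an interior point and the other a non-corner boundary point, $m=2$ when both are non-corner boundary points or one is an interior point (a possible corner being discarded), and $m=1$ when one is a non-corner boundary point (a possible corner discarded), yields exactly the four stated inequalities.

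The hard part is the uniformity in the first of those two facts: the conditional covariance matrix of $(\partial_{v_k}f_t(s_k))_k$ given $(f_t(x),f_t(y))$ must be bounded above and below uniformly in $\eps\in\,]0,\eps_0]$, in $t\in[0,1]$, and over the finitely many lattice configurations and $O(1)$ segment choices involved — equivalently, the Gaussian vector formed by $f_t(x)$, $f_t(y)$ and all the $\partial_{v_k}f_t(s_k)$ must be non-degenerate with determinant bounded away from $0$, uniformly. As $\eps\to0$ the $s_k$ near $x$ (resp.\ near $y$) collapse onto $x$ (resp.\ $y$), so this reduces to the non-degeneracy of $\big(f_t(x),f_t(y),(\partial_v f_t(x))_v,(\partial_w f_t(y))_w\big)$ with the families $(v)$ and $(w)$ each spanning $\R^2$; within each of the two blocks this follows from the non-degeneracy of $f$ (Condition~\ref{a.super-std}) together with the a.s.\ $C^2$-regularity, while the cross-block entries are the $\partial^\beta\kappa(x-y)$ with $|\beta|\le 2$, which Condition~\ref{a.decay2} forces to be uniformly small once $|x-y|\ge d_1$; moreover replacing $f$ by $f_t$ only multiplies the cross-block by $t\le1$, which cannot create degeneracy. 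A compactness argument over the finitely many lattice configurations, together with a choice of $\eps_0$ and $d_1$ depending only on $\kappa$, then makes the bounds uniform, which completes the proof. (The probabilistic inputs — a.s.\ $C^2$-regularity of $f_t$ from $\kappa\in C^8$, the Kac--Rice formula, and the elementary Gaussian conditioning estimates — are recalled in Appendix~\ref{s.basics}.)
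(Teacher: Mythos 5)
Your proposal is correct and follows essentially the same route as the paper: reduce via Remark~\ref{r.piv} and Lemmas~\ref{l.qi.piv_to_geometry}, \ref{l.interior}, \ref{l.boundary} to the vanishing of $m\in\{1,2,3,4\}$ directional derivatives on segments of length $O(\eps)$, bound the conditional probability by a conditional Kac--Rice expectation, control the density via uniform non-degeneracy of the joint Gaussian vector (with cross-covariance small once $|x-y|\geq d_1$ and within-block non-degeneracy from Remarks~\ref{r.odd} and~\ref{r.bm}), and bound the conditional moment of the second derivatives by $O((1+|p|)^m)$. The only cosmetic difference is that you invoke H\"older plus Gaussian moment bounds where the paper packages the same computation as Lemma~\ref{l.prod_Gaussian}, and you spell out the four cases by varying $m$ whereas the paper proves only the $m=4$ case and remarks that the others are identical.
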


Let us first wrap up  the proof of Propositon~\ref{p.eps_qi}.
\begin{proof}[Proof of Proposition~\ref{p.eps_qi}]
Remember that it is enough to prove~\eqref{e.sufficient}. First note that if $\eps\in ]\eps_0,1]$ (where $\eps_0$ is as in Propositon~\ref{p.kac-rice}) then the result is easily  obtained by bounding the probabilities by $1$. Now, assume that $\eps \in ]0,\eps_0]$. Then, by using Proposition~\ref{p.kac-rice}, we obtain that for the $O \left( \eps^{-4} \ar(\calK_1) \ar(\calK_2) \right)$ couples $(x,y)$ such that $x \in \calV_1^\eps \setminus \calC_1$ and $y \in \calV^\eps_2 \setminus \calC_1$, the quantitity $\prob \left[ X_t^\eps \in \piv_x(U^\eps)\cap\piv_y(V^\eps) \cond f_t(x)=f_t(y)=-p\right]$ is bounded by $C_1 (1+|p|)\eps^4$. Consequently, the sum over of all of these couples $(x,y)$ is bounded by $O \left( \eps^{-4} \ar(\calK_1) \ar(\calK_2) \right) (1+|p|)^4$. We reason similarly by also including the points on the boundary (which corresponds to $O\left( \eps^{-1} \leng(\calC_1) \right)$ points $x \in \calC_1$  and $O\left( \eps^{-1} \leng(\calC_2) \right)$ points $x \in \calC_2$) and at the corners (which correspond to $O(k_1)$ points $x \in \calV^\eps_1$ and $O(k_2)$ points $y \in \calV^\eps_2$).
\end{proof}
We now prove Proposition~\ref{p.kac-rice}.
\begin{proof}[Proof of Proposition~\ref{p.kac-rice}]
We prove the first item since the proof of the others is the same (possibly by using Lemma~\ref{l.boundary} instead if Lemma~\ref{l.interior}). Fix $t\in [0,1]$. Throughout the proof, the bounds will be uniform with respect to $t$. By combining Lemmas~\ref{l.qi.piv_to_geometry} and~\ref{l.interior}, we obtain that there exist a finite set of unit vectors $\mathfrak{V}$ independent of everything else, an absolute constant $C_0<+\infty$, and a finite set of $4$-uples of segments $\calL=\calL(x,y,\eps)$ such that $\card \calL \leq C_0$ and such that:
\bi 
\item For every $(l_1,l_2,l_1',l_2') \in \calL$ we have: The segments $l_1,l_2$ have non-colinear unit vectors $v_1,v_2 \in \mathfrak{V}$, are of length at most $C_0 \eps$, and are at distance at most $C_0$ from $x$. Moreover, the same holds for $l_1',l_2'$ near $y$ and with non-colinear unit vectors $v_1',v_2' \in \mathfrak{V}$.
\item The probability of the first item of Proposition~\ref{p.kac-rice} is no greater than the sum over all $(l_1,l_2,l_1',l_2') \in \calL$ of the expectation of:
\[ 
\textup{Card}\{ (a_1,a_2,b_1,b_2) \in l_1\times l_2\times l_1'\times l'_2 \, : \, \forall i,j \in \{ 1,2 \}, \, \partial_{v_i}f_t(a_i)=\partial_{v_j}f_t(b_j)=0\, \}\, .
\]
\ei
To control this expectation, we wish to apply the Kac-Rice formula. In order to do so we introduce the following notation. For each $(a_1,a_2,b_1,b_2)\in l_1\times l_2\times l_1'\times l'_2$, let
\begin{align*}
\Phi_t=\Phi_t(a_1,a_2,b_1,b_2)&=(\partial_{v_1}^2f_t(a_1),\partial_{v_2}^2f_t(a_2),\partial_{v_1'}^2f_t(b_1),\partial_{v_2'}^2f_t(b_2)) \, ,\\
\Psi_t=\Psi_t(x,y)&=(f_t(x),f_t(y)) \, ,\\
\Upsilon_t=\Upsilon_t(a_1,a_2,b_1,b_2)&=(\partial_{v_1}f_t(a_1),\partial_{v_2}f_t(a_2),\partial_{v_1'}f_t(b_1),\partial_{v_2'}f_t(b_2)) \, .
\end{align*}
Since $\kappa$ satisfies Condition~\ref{a.decay2}, then the covariance:
\[
D_t=D_t(a_1,a_2,b_1,b_2)=\left(\begin{matrix}
D_t^{11} & D_t^{12}\\ D_t^{21} & D_t^{22} \end{matrix}\right)
\]
of $(\Psi_t,\Upsilon_t)$ converges as $\eps\rightarrow 0$ and $|x-y|\rightarrow +\infty$, at a rate depending only on $\kappa$, to the following covariance:
\[
D_*=\left(\begin{matrix}
D_*^{11} & D_*^{12}\\ D_*^{21} & D_*^{22} \end{matrix}\right)=\left(\begin{matrix}
I_2 & 0\\ 0 & D_*^{22} \end{matrix}\right)
\]
 where:
\[
D_*^{22}=\left(\begin{matrix}
-\partial_{v_1}^2\kappa(0) & -\partial_{v_1}\partial_{v_2}\kappa (0) & 0 & 0\\
-\partial_{v_1}\partial_{v_2}\kappa (0) & -\partial_{v_2}^2\kappa(0) & 0 & 0\\
0 & 0 & -\partial_{v_1'}^2\kappa(0) & -\partial_{v_1'}\partial_{v_2'}\kappa (0)\\
0 & 0 & -\partial_{v_1'}\partial_{v_2'}\kappa (0) & -\partial_{v_2'}^2\kappa(0)
\end{matrix}\right)\, .
\]
Here we used Lemma~\ref{l.reg} and Remark~\ref{r.odd}. Since $v_1$ and $v_2$ (resp. $v_1'$ and $v_2'$) are non-colinear, the vectors $(\partial_{v_1}f(0),\partial_{v_2}f(0))$ and $(\partial_{v_1'}f(0),\partial_{v_2'}f(0))$ are non-degenerate (see Remark~\ref{r.bm}) so $D_*$ is non-degenerate. Consequently, there exist $C_2=C_2(v_1,v_2,v_1',v_2',\kappa)\in]0,+\infty[$, $d_1=d_1(v_1,v_2,v_1',v_2',\kappa)<+\infty$ and $\eps_0=\eps_0(v_1,v_2,v_1',v_2',\kappa) \in ]0,1]$ such that, if $\eps \in ]0,\eps_0]$ and $|x-y| \geq d_1$ then:
\begin{itemize}
\item the matrix $D_t^{11}$ is non-degenerate;
\item the matrix $\widetilde{D}_t=D_t^{22}-D_t^{21}(D_t^{11})^{-1}D_t^{12}$ is non-degenerate;
\item $\det(\widetilde{D}_t)\geq C_2^{-1}$;
\item the coefficients of $D_t^{-1}$ are no greater than $C_2$.
\end{itemize}
In addition, $\kappa$ is of class $C^8$ so Theorem~\ref{t.Kac_Rice} applies to the field $\Upsilon_t$ conditionned on $\Psi_t=(-p,-p)$. Since conditioning and differentiation 'commute' (see Remark~\ref{r.conditional_regression}), we obtain that the aforementioned expectation is no greater than:
\[
\int_{l_1\times l_2\times l_1'\times l'_2} \frac{\E\left[\prod_{i=1}^4|(\Phi_t)_i(a_1,a_2,b_1,b_2)| \cond \Psi_t(x,y)=(-p,-p)\, ,\, \Upsilon_t(a_1,a_2,b_1,b_2)=0\right]}{(2\pi)^2\sqrt{\det\left(\widetilde{D}_t(a_1,a_2,b_1,b_2)\right)}}da  \, db\, .
\]
The denominator is uniformly bounded from below by the previous discussion. We claim that if $\eps\leq\eps_0$ and $|x-y|\geq d_1$, the numerator is $O\left((1+|p|)^4\right)$. To prove this, notice first that $D_t$ is non-degenerate so Lemma~\ref{l.prod_Gaussian} applies. Moreover, the variance of the entries of $\Phi_t$ depends only on $\kappa$. All that remains is to bound its conditional mean. Firstly, the covariances of the entries of $\Phi_t$ and those of $(\Psi_t,\Upsilon_t)$ are bounded\footnote{Indeed, this follows from Lemma~\ref{l.reg} and the fact that for any two $L^2$ random variables $\xi_1$ and $\xi_2$, $\left|\E\left[\xi_1\xi_2\right]\right|\leq \frac{1}{2}\left(\E\left[\xi_1^2\right]+\E\left[\xi_2^2\right]\right)$.} by constants depending only on the derivatives up to order three of $\kappa$ at $0$. 
Moreover, $D_t^{-1}$ has bounded coefficients so the conditional mean of $\Phi_t$ is $O(|p|)$. Hence, by Lemma~\ref{l.prod_Gaussian}, the numerator is $O\left((1+|p|)^4\right)$. Finally, the integration domain has volume $O(\eps^4)$.
\end{proof}

\subsection{Completing the proof of Theorem~\ref{t.qi_rect}}\label{ss.components}

In this subsection we explain how to complete the proof of Theorem~\ref{t.qi_rect} to take into account events measureable with respect to the number of level lines components inside the rectangles $\calE_i$. In particular, this subsection is of no use for the proof of the RSW estimate Theorem~\ref{t.RSW}. The part of the proof of Theorem~\ref{t.qi_rect} detailed in Subsections~\ref{ss.pivo_excep} and \ref{ss.Kac_Rice} hinges on the two following ideas: first, that the crossing events can be approximated by discrete events and second, that the fact that a point $x$ is pivotal for a crossing events implies certain exceptional conditions on its neighbors whose probabilities are easy to control. To complete the proof of of Theorem~\ref{t.NS}, we justify that the discretization of the additional events is valid in Lemma~\ref{l.components.2} which in turn relies on Lemma~\ref{l.components.1}. Then, we prove that the additional pivotal events imply the cancellation of certain derivatives in Lemma~\ref{l.components.piv_to_geometry} and Lemma~\ref{l.components.white}. The rest of the proof relies on results from Section~\ref{s.weak-qi}.
\begin{remark}
Lemmas \ref{l.components.1} and \ref{l.components.2} below could be deduced from Proposition 6.1 of \cite{bm_17} and Theorem 1.5 of \cite{bm_17} respectively. However, since we do not need to control the rate of convergence when $\eps\rightarrow 0$, we do not need a quantitative discretization scheme so instead we present a simpler proof relying only on transversality arguments.
\end{remark}
\begin{lemma}\label{l.components.1}
Let $\calE\subseteq\R^2$ be a rectangle. Assume that the Gaussian field $f$ satisfies Condition~\ref{a.super-std} and that $\kappa$ is $C^6$. Fix $p\in\R$. Then, a.s. there exists a (random) constant $\eps_0>0$ such that for a.e. $\eps\leq \eps_0$, we have:
\item[i)] $\calT^\eps$ and $\calN_p$ intersect transversally,
\item[ii)] each edge of $\calT^\eps$ inside $\calE$ has at most two intersection points,
\item[iii)] any two distinct intersection points of a common edge $e$ are connected by a smooth path in $\calN_p$ inside the union of the two faces adjacent to $e$,
\item[iv)] for each connected component $\calC$ of $\calN_p$ there exists an edge $e$ of $\calT^\eps$ such that $\calC$ intersects $e$ exactly once and $e$ has no other intersection with the nodal set,
\item[v)] there is no edge of $\calT^\eps$ included in the boundary of $\calE^\eps$ that is intersected twice by $\calN_p$, where $\calE^\eps$ is (one of) the largest rectangle whose sides are integer multiples of $\eps$ such that $\calE^\eps \subseteq \calE$.
\end{lemma}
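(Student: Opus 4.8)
The plan is to derive (i)--(v) from the a.s.\ smoothness of $\calN_p$ together with Bulinskaya-type transversality arguments controlling the finitely many \emph{tangencies} between $\calN_p$ and the fixed lattice directions, and then to upgrade these pointwise facts to uniform-in-$\eps$ statements by a compactness/normal-form argument. Fix a rectangle $\calE'$ with $\calE\subset\mathrm{int}(\calE')$. Recall from Lemma~\ref{l.transversality.1} that a.s.\ $\nabla f$ does not vanish on $\calN_p$; moreover, since $\kappa\in C^6$, the field $f$ is a.s.\ of class $C^{2,\beta}$ for every $\beta<1$ (see Appendix~\ref{s.basics}), so a.s.\ $\calN_p$ is a properly embedded one-manifold of this regularity. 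Let $v_1,\dots,v_4$ be the four unit directions spanned by the edges of $\calT$, namely $(1,0)$, $(0,1)$ and $(1,\pm1)/\sqrt2$. The elementary observation that organizes the proof is that $v_j$ is tangent to $\calN_p$ at a point $x$ exactly when $f(x)=-p$ and $\partial_{v_j}f(x)=0$, and that this tangency is quadratic (non-degenerate) unless in addition $\partial_{v_j}^2 f(x)=0$.

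I would first fix the a.s.\ event carrying all the transversality information. By a Bulinskaya argument for the $\R^2\to\R^2$ maps $x\mapsto(f(x)+p,\partial_{v_j}f(x))$, a.s.\ each set $B_j:=\{x\in\overline{\calE'}:f(x)=-p,\ \partial_{v_j}f(x)=0\}$ is finite; by the Bulinskaya argument for the $\R^2\to\R^3$ maps $x\mapsto(f(x)+p,\partial_{v_j}f(x),\partial_{v_j}^2f(x))$ (licit because $\kappa\in C^6$), a.s.\ no point of $B_j$ is a degenerate tangency. Running the same argument restricted to each of the four sides of $\calE$ --- where the relevant maps go from $\R$ to $\R^2$, so their zero sets are a.s.\ empty --- shows that a.s.\ $\calB:=\bigcup_j B_j$ meets no side of $\calE$; hence a.s.\ $\calN_p$ crosses $\partial\calE$ transversally, and it avoids the four corners because $\P[f(z)=-p]=0$ for a fixed point $z$. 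Consequently $\calN_p\cap\overline\calE$ is a compact one-manifold with boundary, so it has finitely many connected components, and $\calN_p$ has only finitely many components meeting $\calE$. From now on we work on this a.s.\ event, on which $\calB$ is a fixed finite set.

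It remains to choose $\eps_0$ and check (i)--(v). For (i): in each of the four directions the edge-lines of $\calT^\eps$ form a family of parallel lines whose common affine coordinate ranges over $\eps\Z$, so a fixed point $b\in\calB$ lies on such a line for at most countably many values of $\eps$; taking the union over the finitely many $b\in\calB$ and the four directions, and also adjoining the (again countably-many-per-vertex, hence null) set of $\eps$ for which $\calN_p$ contains a vertex of $\calT^\eps$, yields a Lebesgue-null set $\Lambda\subset(0,\infty)$ such that for $\eps\notin\Lambda$ every edge-line of $\calT^\eps$ is disjoint from $\calB$ and no vertex lies on $\calN_p$, i.e.\ (i) holds. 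For (ii), (iii), (v): by compactness of $\calN_p\cap\overline{\calE'}$ and the fact that $\partial_{v_j}f$ is bounded away from $0$ along $\calN_p$ outside any fixed neighbourhood of the finite set $\calB$, there is $\delta>0$ such that any $v_j$-segment of length $\le\delta$ staying at distance $\ge\delta$ from $\calB$ meets $\calN_p$ at most once; near each $b\in\calB$ the quadratic normal form of $\calN_p$ (using that the tangency at $b$ is non-degenerate) shows that a short $v_j$-segment near $b$ meets $\calN_p$ at most twice, the two intersection points lying $O(\eps)$ apart and joined along $\calN_p$ by an arc contained in an $O(\eps^2)$-neighbourhood of the segment, hence for small $\eps$ inside the two faces adjacent to the corresponding edge. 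Choosing $\eps_0$ small (in terms of $\delta$, the finitely many tangency curvatures and positions, and, for (v), the positive number $\min_{b\in\calB}\dist(b,\partial\calE)$, which bounds below the distance from $\calB$ to every side of every $\calE^\eps$ with $\eps\le\eps_0$) gives (ii), (iii) and (v) for all $\eps\le\eps_0$. For (iv): list the finitely many components $\calC_1,\dots,\calC_r$ of $\calN_p$ meeting $\calE$, pick $y_l\in\calC_l\cap\mathrm{int}(\calE)\setminus\calB$ for each $l$ (such a point exists since $\calC_l$ crosses $\partial\calE$ transversally and $\calB$ is finite), use that near $y_l$ the set $\calN_p$ is a graph transversal to some $v_j$ and that the rest of $\calN_p$ is at positive distance from $y_l$ to find, for $\eps$ small, a single $v_j$-edge of $\calT^\eps$ near $y_l$ meeting $\calC_l$ exactly once and $\calN_p$ nowhere else; shrinking $\eps_0$ finitely many times yields (iv). Altogether (i)--(v) hold simultaneously for every $\eps\in(0,\eps_0]\setminus\Lambda$, i.e.\ for a.e.\ $\eps\le\eps_0$.

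The step I expect to be the main obstacle is the uniform-in-$\eps$ control needed for (v), because the rectangle $\calE^\eps$ itself moves with $\eps$ and its sides converge to those of $\calE$: this is exactly why one needs the auxiliary Bulinskaya argument on the one-dimensional sets $\partial\calE$, which keeps $\calB$ at a definite distance from $\partial\calE$ so that no boundary line of any $\calE^\eps$, $\eps\le\eps_0$, can come close to a tangency. A secondary, more bookkeeping, point is the regularity: $\kappa\in C^6$ (rather than the $C^4$ that already suffices for the smoothness of $\calN_p$ and for (i)) is needed so that $\partial_{v_j}^2f$ is regular enough to run the codimension-three Bulinskaya argument excluding degenerate tangencies, on which items (ii) and (iii) rely.
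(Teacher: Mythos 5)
Your proof follows essentially the same strategy as the paper's: establish a.s.\ that the tangencies of $\calN_p$ with the finitely many lattice directions are non-degenerate (hence form a finite set $\calB$), and then run deterministic planar geometry on that event. The paper packages the deterministic part of items i)--iii) into Lemma~\ref{l.transversality.3}, whose proof also uses Sard's theorem for i) and a curvature bound for ii)--iii) that plays the role of your quadratic normal form near $\calB$. Where you genuinely diverge is item iv): the paper argues by contradiction, showing that a component meeting every edge an even number of times is confined (via iii)) to a face together with its three neighbours and therefore encloses too small an area, whereas you directly exhibit an edge crossed exactly once by working near an interior non-tangency point of the component. Both are valid, and yours is arguably the more elementary of the two.

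Two points need care. First, in your step for i) you claim that a fixed $b\in\calB$ lies on an edge-line of $\calT^\eps$ for at most countably many $\eps$. For the horizontal direction this reads $b_2\in\eps\Z$, which is countable in $\eps$ \emph{only if} $b_2\neq0$; if $b$ lies on one of the four lines through the origin spanned by the lattice directions, the corresponding edge-line contains $b$ for \emph{every} $\eps$ and i) fails identically. You must therefore also run a one-dimensional Bulinskaya argument on these four lines intersected with $\overline{\calE'}$, exactly as you did for the sides of $\calE$; the paper absorbs this into the transversality-with-the-axes hypothesis of Lemma~\ref{l.transversality.3} (and the two diagonals through the origin, which are also edge-lines of the face-centred lattice, require the same treatment). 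Second, your regularity bookkeeping is slightly off: $\kappa\in C^6$ gives $K\in C^{3,3}$, hence $f$ a.s.\ $C^2$ by Lemma~\ref{l.reg}, so $(f,\partial_{v_j}f,\partial_{v_j}^2f)$ is a.s.\ continuous but not $C^1$, while Lemma~\ref{l.transversality.2} as stated asks for $C^1$. This imprecision is shared with the paper (and is harmless under the weaker modulus-of-continuity hypotheses of the source cited for Lemma~\ref{l.transversality.2}), but your claim that $C^6$ makes $\partial_{v_j}^2f$ ``regular enough'' in the $C^1$ sense is not quite right.
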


\begin{proof}[Proof of Lemma \ref{l.components.1}]
By Lemma~\ref{l.transversality.1}, $\calN_p$ is a.s. smooth and intersects $\partial\calE$ transversally. Let $w$ be a unit vector tangent to an edge of the lattice. We apply Lemma~\ref{l.transversality.2} to $T=\calE$, $g=(f,\partial_wf,\partial_w^2f)$ and $v=(0,0,0)$ ($g$ has bounded density by Remark~\ref{r.bm} and by stationarity). This shows that the  set of points $x\in\calN_p$ such that $T_x\calN_p$ is tangent to $w$ is a.s. discrete. We then simply apply Lemma \ref{l.transversality.3} to $\calC$ the union of connected components of $\calN_p$ intersecting $\calE$ (who are a.s. in finite number and a.s. do not intersect $0$, possibly modifying them outside of $\calE$ to make $\calC$ compact). This establishes assertions i), ii) and iii).

To show iv), first take $\eps$ smaller than the distance between any two distinct connected components of $\calN_p$ intersecting $\calE$ so that each edge $e$ can intersect at most one connected component. Assume that $\calC$ intersects each edge an even number of times. Then, it must stay in a union of a face and its three adjacent faces. If $\eps^2$ is much smaller than the area of the smallest connected component of $\calE\setminus\calN_p$ this cannot happen so iv) is satisfied.

In order to show v), use once again Lemma~\ref{l.transversality.1} in order to obtain that $\calN_p$ intersects the boundary of $\calE$ transversally and only finitely many times. This completes the proof.
\end{proof}

In the arguments below, we will need to discretize level lines of the field. To this end, let us introduce some notations.
\begin{notation}\label{n.components.UandV}
Let $\eps>0$, $p\in\R$ and $(\calE_i)_{1\leq i\leq k_1+k_2}$, $\calK_1$, $\calK_2$, $\calC_1$, $\calC_2$ and $N_p(i)$ be as in Theorem~\ref{t.qi_rect}. Let $\calV_1^\eps$ and $\calV_2^\eps$ as in Notation~\ref{n.f_t}. Color the plane as explained at the beginning of Section~\ref{s.weak-qi}. Given such a coloring, each face has either zero or two sides whose ends have opposite colors. If a face has two such sides, draw a segment joining the middle of these two sides. This produces a collection of polygonal lines on the plane. We denote by $\calN_p^\eps$ the union of these lines. For each $i\in\{1,\dots,k_1+k_2\}$, let $\calE_i^\eps$ be (one of) the largest rectangle whose sides are integer multiples of $\eps$ and such that $\calE_i^\eps\subseteq\calE_i$, let $N_p^\eps(i)$ be the number of connected components of $\calN_p^\eps$ contained in $\calE_i^\eps$. Let $A$ be an event in the $\sigma$-algebra defined by events of the form $\{N_p(i)=m\}$ where $i\in\{1,\dots,k_1\}$ and $m\in\N$. Let $A^\eps$ be the same event as $A$ but with the $N_p(i)$'s replaced by the $N_p^\eps(i)$'s. There exists $U^\eps\subseteq\R^{\calV_1^\eps\cup\calV_2^\eps}$ (resp. $V^\eps\subseteq\R^{\calV_1^\eps\cup\calV_2^\eps}$) such that $A^\eps=\{X^\eps\in U^\eps\}$. Note that by construction, the events $A$ and $B$ belong to the Boolean algebra generated by events of the form $\{N_p(i)\in S\}$ where $S\subseteq\N$.
\end{notation}

\begin{lemma}\label{l.components.2}
Assume that the Gaussian field $f$ satsifies Condition~\ref{a.super-std} and that $\kappa$ is $C^6$. We use Notation~\ref{n.components.UandV}. Then,
\[
\limsup_{\eps\rightarrow 0}\prob\left[\forall i\in\{1,\dots,k_1+k_2\},\, N_p(i)=N_p^\eps(i)\right]=1\, .
\]
\end{lemma}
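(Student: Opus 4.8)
The plan is to deduce Lemma~\ref{l.components.2} from Lemma~\ref{l.components.1} by showing that, on the event of full measure where the conclusions i)--v) of Lemma~\ref{l.components.1} hold for all $\eps \leq \eps_0$ (applied to each rectangle $\calE_i$ simultaneously, which is legitimate since there are finitely many of them), the discretized nodal set $\calN_p^\eps$ restricted to $\calE_i^\eps$ is in bijective correspondence with the connected components of $\calN_p$ that are relevant to the count $N_p(i)$. First I would fix a realization of $f$ for which Lemma~\ref{l.components.1} applies to every $\calE_i$; it suffices to prove that for a.e. $\eps \leq \eps_0$ one has $N_p(i) = N_p^\eps(i)$ for every $i$, and then conclude by dominated convergence (or simply by the reverse Fatou lemma applied to $\un\{\forall i,\, N_p(i)=N_p^\eps(i)\}$, noting the probabilities are bounded by $1$). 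Indeed, $\prob[\forall i,\, N_p(i)=N_p^\eps(i)] \geq \prob[\text{the conclusions of Lemma~\ref{l.components.1} hold for all } \calE_i \text{ at scale } \eps]$, and the latter tends to $1$ as $\eps \to 0$ by Lemma~\ref{l.components.1}.

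The core is thus the deterministic claim: if $f$ is such that $\calN_p$ is smooth, transverse to $\calT^\eps$, and properties ii)--v) hold for $\calE = \calE_i$, then $N_p^\eps(i) = N_p(i)$. I would argue this by constructing a bijection between connected components of $\calN_p^\eps$ inside $\calE_i^\eps$ and connected components of $\calN_p$ inside $\calE_i$. In one direction: given a component $\calC$ of $\calN_p$ contained in $\calE_i$, property iv) produces an edge $e$ crossed exactly once by $\calC$; following $\calN_p$ along $\calC$, each time it crosses an edge of $\calT^\eps$ the discrete rule draws a segment in the corresponding face (this is exactly the marching-squares construction, and property iii) guarantees that the two intersection points on an edge are joined by an arc of $\calN_p$ staying in the two adjacent faces, so the discrete picture is combinatorially faithful to the real one), and the segments drawn in the faces traversed by $\calC$ concatenate into a single polygonal loop $\calC^\eps \subseteq \calN_p^\eps$. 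One checks $\calC^\eps$ stays inside $\calE_i^\eps$: here property v) is what prevents a component near $\partial \calE_i$ from having its discrete approximation cross out of $\calE_i^\eps$, and taking $\eps$ small compared to the distance from $\calN_p$-components to $\partial\calE_i$ (for those components that are genuinely inside) handles the rest. Conversely, every component of $\calN_p^\eps$ inside $\calE_i^\eps$ arises this way from a unique component of $\calN_p$: by property ii) each edge carries at most two intersection points so the marching-squares rule is unambiguous, and each face has either zero or two marked sides (so the polygonal lines are disjoint simple arcs/loops), hence each discrete loop determines the sequence of faces it passes through, which is exactly the sequence of faces some real component passes through. Injectivity and surjectivity of the correspondence then follow from iv) (no real component is "missed" because it always leaves at least one genuine edge-crossing trace) and from the fact that distinct real components, being at distance $> \eps$ apart by the choice of $\eps$, cannot contribute to the same face.

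The main obstacle I anticipate is the careful bookkeeping at the boundary $\partial \calE_i$: a component of $\calN_p$ that comes very close to $\partial\calE_i$ but is still contained in $\calE_i$ could, a priori, have a discrete approximation that pokes outside $\calE_i^\eps$ (recall $\calE_i^\eps$ is slightly smaller than $\calE_i$), which would spoil the count. This is precisely why property v) was included, together with the transversality of $\calN_p$ to $\partial\calE_i$ from Lemma~\ref{l.transversality.1}: for $\eps$ small enough the only components of $\calN_p^\eps$ that could straddle $\partial\calE_i^\eps$ would have to correspond to components of $\calN_p$ crossing $\partial\calE_i$, which are not counted on either side, so the discrepancy is zero. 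Making this last point fully rigorous — i.e. quantifying "small enough" in terms of the finitely many transverse intersection points of $\calN_p$ with the boundaries of the $\calE_i$'s and the minimal distance between components — is the one genuinely delicate step; everything else is the standard fact that the marching-squares reconstruction of a curve transverse to a fine lattice is isotopic to the curve.
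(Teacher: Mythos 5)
The deterministic/geometric core of your argument (the bijection between components of $\calN_p$ in $\calE_i$ and components of $\calN_p^\eps$ in $\calE_i^\eps$, using the transversality and non-degeneracy conclusions of Lemma~\ref{l.components.1}, and the role of property v) at the boundary) is essentially the same as the paper's proof of its intermediate Claim. The gap is in the probabilistic step that converts this into the stated $\limsup$.

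You write that $\prob\bigl[\text{the conclusions of Lemma~\ref{l.components.1} hold at scale } \eps\bigr]$ ``tends to $1$ as $\eps \to 0$ by Lemma~\ref{l.components.1}'', and propose to conclude via dominated convergence or reverse Fatou. Neither step is justified. Lemma~\ref{l.components.1} gives: a.s.\ there is a random $\eps_0(\omega)>0$ such that the conclusions hold for \emph{Lebesgue-a.e.} $\eps\leq\eps_0(\omega)$. The exceptional Lebesgue-null set of bad $\eps$'s depends on $\omega$, so for a \emph{fixed} $\eps$ one cannot deduce that $\un_{\Xi(\eps)}\to 1$ a.s.\ along any deterministic sequence $\eps_k\to 0$: each fixed $\eps_k$ could in principle lie in the exceptional set for a non-negligible set of realizations. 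Consequently dominated convergence does not apply, and reverse Fatou is the wrong direction in any case (it bounds $\limsup_\eps \prob[\Xi(\eps)]$ \emph{above} by $\prob[\limsup_\eps\Xi(\eps)]$, when what you need is a lower bound). This is exactly why the lemma is stated as a $\limsup$ and not a limit.

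The paper bridges this gap with an averaging/Fubini argument: for any $\delta>0$, pick $\tau>0$ so that with probability $\geq 1-\delta$ the conclusions hold for a.e.\ $\eps\leq\tau$; then
\[
\int_0^\tau \prob[\Xi(\eps)]\,d\eps \;=\; \E\!\left[\int_0^\tau\un_{\Xi(\eps)}\,d\eps\right]\;\geq\;\tau(1-\delta),
\]
and since the integrand is bounded by $1$, there must exist some $\eps\in\,]0,\tau]$ with $\prob[\Xi(\eps)]\geq 1-2\delta$. Letting $\delta\to 0$ (so $\tau(\delta)\to 0$) yields the $\limsup$ claim. You should replace your ``dominated convergence / reverse Fatou'' step by this Fubini argument; the rest of your proposal is sound.
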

\begin{proof}
We start with the following claim.
\begin{claim}\label{cl.components.2}
For each $i\in\{1,\dots,k_1+k_2\}$ a.s., for Lebesgue-a.e. small enough $\eps>0$, $N_p(i)=N_p^\eps(i)$.
\end{claim}
\begin{proof}
Fix $i\in\{1,\dots,k_1+k_2\}$. By points i) to iv) of Lemma~\ref{l.components.1}, a.s., for a.e. $\eps>0$ small enough, $\calN_p$ intersects $\partial\calE_i$ and $\calT^\eps$ transversally, each edge of $\calT^\eps$ included in $\calE_i^\eps$ is crossed at most twice and any two intersection points of the same edge are connected by $\calN_p$ inside one of its adjacent faces. Also, each connected component of $\calN_p$ must intersect an edge which is crossed exactly once by $\calN_p$.

In particular, the following is an equivalent definition of $\calN_p^\eps(i)$ for a.e. $\eps>0$ small enough: i) Let $F$ be a face of the lattice with two sides $e,e'$ that are intersected by $\calN_p$ exactly once and consider a path $\gamma$ included in $F \cap \calN_p$ that connects $e$ and $e'$. Then, replace $\gamma$ by a straight line as in Figure~\ref{f.tr_nodal} (case 1). ii) Let $F$ be a face of the lattice with two sides $e,e'$ that are intersected by $\calN_p$ exactly once, let $e''$ the third edge adjacent to $F$ and let $F'$ be the other face adjacent to $e''$. Also, consider a path $\gamma$ included in $(F \cup F') \cap \calN_p$ that connects $e$ and $e'$ and intersects $e''$ twice. Then, replace $\gamma$ by a straight line in Figure~\ref{f.tr_nodal} (case 2).

\begin{figure}[!h]
\begin{center}
\includegraphics[scale=0.55]{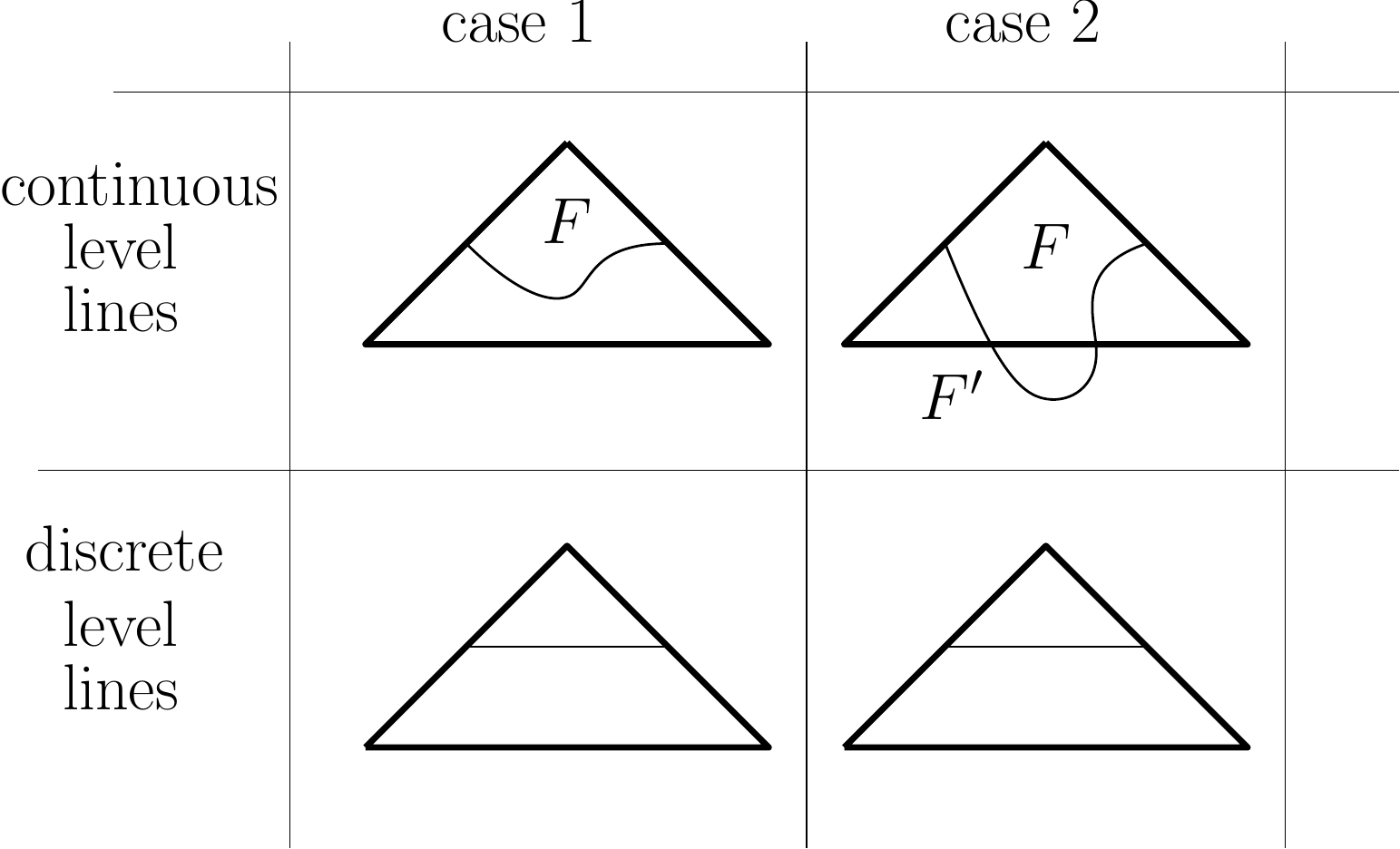}
\end{center}
\caption{An alternative definition of $\calN_p^\eps(i)$ when the conclusion of Lemma~\ref{l.components.1} holds.}\label{f.tr_nodal}
\end{figure}

One can see that, doing so, we redefine $\calN_p^\eps$ and this alternate definition shows that its connected components are naturally in bijection with those of $\calN_p$. Moreover, for all $eps>0$ small enough, connected components of $\calN_p$ included in $\calE_i$ are also included in $\calE_i^\eps$ so that $N_p(i)\leq N_p^\eps(i)$. On the other hand, if a continuous connected component gives rise to a discrete connected component included in $\calE_i^\eps$, it cannot cross edges of $\partial\calE_i^\eps$ once. But it cannot cross them twice either by point v) of Lemma \ref{l.components.1}. As a result, $N_p^\eps(i)\leq N_p(i)$.
\end{proof}


Let $\Xi(\eps)$ be the event that for all $i\in\{1,\dots,k_1+k_2\}$, $N_p(i)=N_p^\eps(i)$. Now, by Claim~\ref{cl.components.2}, for each $\delta>0$ there exists $\tau=\tau(\delta)>0$ such that, with probability at least $1-\delta$, for Lebesgue-a.e. $\eps\leq\tau$, $\Xi(\eps)$ is satisfied. Moreover, $\tau$ can be chosen so that $\lim_{\delta\rightarrow 0}\tau(\delta)=0$. In particular,
\[
\E\left[\int_0^\tau \un_{\Xi(\eps)}d\lambda(\eps)\right]\geq \tau(1-\delta)\, .
\]
By Fubini's theorem, we deduce that
\[
\int_0^\tau\prob\left[\Xi(\eps)\right]d\lambda(\eps)\geq\tau(1-\delta).
\]
In particular, there exists $\eps=\eps(\delta)\in]0,\tau(\delta)]$ such that $\prob\left[\Xi(\eps)\right]\geq 1-2\delta$. Since this holds for any $\delta>0$, the proof is complete.
\end{proof}

\begin{lemma}\label{l.components.piv_to_geometry}
Use Notation~\ref{n.components.UandV} and, for each $x\in\calV_1^\eps$, let $\omega^\eps\in\piv_x(U^\eps)$. Color the edges $e=(x,y)$ of $\calT^\eps$ such that $\omega^\eps(x),\omega^\eps(y)\geq -p$ in black and color the rest of the plane in white. Then:
\begin{enumerate}
\item if $x$ belongs to $\calK_1\setminus\calC_1$ then either the neighbors of $x$ are all of the same color or $x$ has (at least) four neighbors that have alternating color when listed in anti-clockwise order;
\item if $x$ belongs to $\calC_1$ but is not a corner, then it has three neighbors of alternating color when listed in anti-clockwise order.
\end{enumerate}
\end{lemma}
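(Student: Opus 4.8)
The plan is to follow closely the proof of Lemma~\ref{l.qi.piv_to_geometry}, the only genuinely new phenomenon being that a site may be pivotal for a component‑counting event merely because switching its sign creates or destroys a microscopic nodal loop around it; this is precisely what the extra alternative ``all neighbours of the same colour'' in item~1 records. First, by Notation~\ref{n.components.UandV} the set $U^\eps$ belongs to the Boolean algebra generated by the events $\{N_p^\eps(i)\in S\}$ with $i\in\{1,\dots,k_1\}$, so by Remark~\ref{r.piv} we have $\piv_x(U^\eps)\subseteq\bigcup_{i=1}^{k_1}\piv_x(\{N_p^\eps(i)\in S_i\})$ for suitable sets $S_i$; since $N_p^\eps(i)$ is a deterministic function of the signs of $\omega^\eps+p$, membership of $\omega^\eps$ in $\piv_x(\{N_p^\eps(i_0)\in S_{i_0}\})$ forces the value of $N_p^\eps(i_0)$ to change when one switches the sign of $\omega^\eps(x)+p$ with the other coordinates frozen, for at least one $i_0\le k_1$. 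Writing $\calN_{p,+}^\eps$ and $\calN_{p,-}^\eps$ for the two discretized level lines obtained by forcing that sign to be positive, resp. negative, these coincide outside the star $S(x)$ of $x$ (the union of the closed faces of $\calT^\eps$ meeting $x$), while their numbers of components contained in $\calE_{i_0}^\eps$ differ. Everything thus reduces to a local analysis around $x$.

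For item~1, assume $x\in\calK_1\setminus\calC_1$ and take $\eps$ small enough (as in Proposition~\ref{p.kac-rice}) that $S(x)\subseteq\calE_{i_0}^\eps$ — legitimate since $x$ is then at fixed positive distance from $\partial\calE_{i_0}$. List the neighbours $y_1,\dots,y_m$ of $x$ anticlockwise and let $s_1,\dots,s_m\in\{+,-\}$ be the signs of $\omega^\eps+p$ at them, $s_0$ the sign at $x$. Inside $S(x)$ the nodal lines are determined by the active edges (those with endpoints of opposite sign), and switching $s_0$ toggles exactly the $m$ spokes $[x,y_j]$; the number of active boundary edges $[y_j,y_{j+1}]$ equals the number of sign changes of the cyclic word $s_1\cdots s_m$, which is even. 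If it is $0$, all $s_j$ share a value $\sigma$ and $\calN_{p,\pm}^\eps\cap S(x)$ is empty when the sign at $x$ is $\sigma$ and a single loop encircling $x$ when it is $\bar\sigma$, so $N_p^\eps(i_0)$ does jump; moreover all $m$ edges at $x$ then share a colour (all white if $s_0=-$; all black if $s_0=+=\sigma$; all white if $s_0=+$ and $\sigma=-$). If it is exactly $2$, then $\calN_{p,\pm}^\eps\cap S(x)$ is a single arc joining the midpoints of the two active boundary edges, and switching $s_0$ merely slides this arc from one side of $x$ to the other without moving its endpoints, so $N_p^\eps(i_0)$ cannot change and this case is excluded. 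If it is at least $4$, then either $s_0=-$, whence all $m$ edges at $x$ are white, or $s_0=+$, whence the colours of the edges at $x$ coincide with the signs $s_j$ and at least four sign changes exhibit four neighbours of alternating colour anticlockwise. This is item~1.

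For item~2, $x$ lies on a side $\ell$ of the relevant rectangle $\calE_{i_0}$, and is not a corner; for $\eps$ small we may assume $x\in\partial\calE_{i_0}^\eps$ and that the part $H$ of $S(x)$ on the $\calE_{i_0}$‑side of $\ell$ is contained in $\calE_{i_0}^\eps$. The key point is that a nodal loop contained in the closed set $\calE_{i_0}^\eps$ cannot meet $\partial\calE_{i_0}^\eps$, since at a crossing point the nodal line continues into the face on the other side; hence such a loop enters $H$ only through the boundary edges of the half‑star not lying on $\ell$. Therefore neither loop creation nor an arc hugging $\ell$ changes $N_p^\eps(i_0)$, and a change of $N_p^\eps(i_0)$ under switching $s_0$ forces a re‑pairing of the nodal arcs joining those inner boundary crossing points of $\partial H$. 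Running the analysis of the previous paragraph inside this half‑star (where again two sign changes only slide a single arc), a re‑pairing requires enough sign changes among the signs at the neighbours of $x$ on the $\calE_{i_0}$‑side, and — since, as before, the colours there are non‑constant only when $s_0=+$ — this produces three neighbours of alternating colour listed anticlockwise.

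I expect the main obstacle to be, in both cases, the verification that a configuration with exactly two sign changes around the relevant part of $\partial S(x)$ is never pivotal: one must exhibit the unique nodal arc in the (half‑)star explicitly and check that switching $s_0$ only translates it around $x$ while fixing its endpoints, so that its gluing with the unchanged part of $\calN_p^\eps$ is the same before and after the switch. That, together with the bookkeeping relating the edge‑colouring to the signs $s_j$ (which itself depends on $s_0$), is where the work lies; the remainder transcribes the proof of Lemma~\ref{l.qi.piv_to_geometry}.
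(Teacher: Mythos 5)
Your proposal follows the same core idea as the paper's own (very terse, four-sentence) proof: reduce by Remark~\ref{r.piv} to a single event of the form $\{N_p^\eps(i)\in S\}$, and observe that if the neighbours of $x$ form exactly one black cyclic block and one white cyclic block (equivalently, exactly two sign changes around the star $S(x)$) then switching the sign at $x$ only slides a single discrete nodal arc between the same two endpoints on $\partial S(x)$, so $N_p^\eps(i)$ is unchanged and $x$ is not pivotal. The paper states only this one observation and then declares both items to follow; you carry out the finer case analysis explicitly (0, 2, and $\geq 4$ sign changes) and, for item~2, add the observation—which the paper leaves implicit—that a loop encircling a boundary vertex is never contained in $\calE_i^\eps$, which is precisely why the ``all neighbours same colour'' alternative is present in item~1 but absent from item~2. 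That addition is genuinely useful for a reader trying to see why the two items differ.

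Two minor remarks. First, you repeatedly reason about the colours of the \emph{edges} incident to $x$ (distinguishing $s_0=+$ from $s_0=-$), whereas what the lemma asserts—and what the subsequent application to Lemmas~\ref{l.components.white}, \ref{l.interior}, \ref{l.boundary} in the final proof of Theorem~\ref{t.qi_rect} requires—is a statement about the \emph{vertex} colours of the neighbours, i.e.\ the signs $s_j$ of $\omega^\eps(y_j)+p$. The edge/vertex distinction is immaterial because the sign‑change count around the cycle of neighbours is intrinsic to the $s_j$'s and does not depend on $s_0$; your conclusions in each case are correct, but the digressions about edge colours as a function of $s_0$ are unnecessary and slightly obscure the argument. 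Second, for item~2 your ``re‑pairing'' discussion is a reasonable sketch but, as you acknowledge, not fully spelled out; the paper's proof is equally terse at this point, and the cleanest way to finish is to note that in the half‑star the relevant linear (not cyclic) word $s_1\cdots s_k$ with at most one sign change produces a single arc that switching $s_0$ merely deforms, so pivotality forces at least two sign changes in that linear word, giving the desired alternating triple in the half‑plane.
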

\begin{proof}
By Remark~\ref{r.piv}, we may assume that $A^\eps=\{N_p^\eps(i)=m\}$ for some $i\in\{1,\dots,k_1\}$ and $m\in\N$. Fix $\eps>0$, $x\in\calV^\eps_1$ and fix a value of $X^\eps$. If the set of neighbors has exactly one black connected component and one white component, then changing the color of $x$ does not change $N_p^\eps(i)$. Therefore $x$ being pivotal for $U^\eps$ implies the two items.
\end{proof}

The following lemma is a trivial application of Rolle's theorem.
\begin{lemma}\label{l.components.white}
Let $\varphi\in C^1(\R^2)$. Fix $x\in\R^2$ and assume that $\varphi(x)=0$. Then:
\begin{enumerate}
\item if there exist $x_1,x_2\in\R^2$ such that for each $i\in\{1,2\}$, $\varphi(x_i)\leq 0$ and such that $x\in ]x_1,x_2[$, then $\varphi|_{[x_1,x_2]}$ has a critical point;
\item if there exist $x_1,x_2,x_3,x_4\in\R^2$ such that for each $i\in\{1,2,3,4\}$, $\varphi(x_i)\leq 0$ and such that $l_1=[x_1,x_3]$ and $l_2=[x_2,x_4]$ intersect in their interior at $x$, then $\varphi|_{l_1}$ and $\varphi|_{l_2}$ have a critical point.
\end{enumerate}
\end{lemma}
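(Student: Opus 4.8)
The plan is to reduce both items to a one-dimensional statement and then invoke Fermat's (equivalently Rolle's) theorem, exactly as the remark preceding the lemma suggests.

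For item~1, I would parametrize the segment $[x_1,x_2]$ affinely: set $\psi(s)=\varphi\bigl((1-s)x_1+sx_2\bigr)$ for $s\in[0,1]$, so that $\psi\in C^1([0,1])$ and a point of $(0,1)$ where $\psi'$ vanishes corresponds, up to the positive factor $|x_2-x_1|$, to a point of $[x_1,x_2]$ where $\partial_v\varphi$ vanishes, $v$ being the unit tangent of the segment. By hypothesis $x\in\,]x_1,x_2[$, so $x$ is the image of some $t\in(0,1)$, and $\psi(t)=\varphi(x)=0$, while $\psi(0)=\varphi(x_1)\le 0$ and $\psi(1)=\varphi(x_2)\le 0$. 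Let $M=\max_{[0,1]}\psi\ge\psi(t)=0$. I would then split into two cases: if $M=0$, the maximum is attained at the interior point $t$; if $M>0$, the maximum cannot be attained at $0$ or $1$ (where $\psi\le 0$), hence it is attained at some interior point. In either case $\psi$ has an interior maximum, so $\psi'$ vanishes there, which yields the desired critical point of $\varphi|_{[x_1,x_2]}$.

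For item~2, I would simply apply item~1 twice. Since $l_1=[x_1,x_3]$ and $l_2=[x_2,x_4]$ meet in their interiors at $x$, we have $x\in\,]x_1,x_3[$ and $x\in\,]x_2,x_4[$; together with $\varphi(x)=0$ and $\varphi(x_i)\le 0$ for all $i\in\{1,2,3,4\}$, the hypotheses of item~1 are satisfied for the pair $(x_1,x_3)$ and for the pair $(x_2,x_4)$, giving a critical point of $\varphi|_{l_1}$ and one of $\varphi|_{l_2}$.

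There is essentially no real obstacle here: the statement is elementary. The only point requiring a moment's care is ensuring that the critical point lies in the \emph{interior} of the parameter interval, so that the one-sided derivative is genuinely zero rather than merely of one sign; this is precisely why the sign hypothesis is imposed at the two endpoints while the zero of $\varphi$ sits strictly between them.
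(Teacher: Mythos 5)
Your proof is correct, and the paper itself offers no written proof beyond remarking that the lemma is ``a trivial application of Rolle's theorem,'' so your one-dimensional extremum argument (Fermat's interior-extremum theorem after affine parametrization, with the two-case split on whether the maximum is zero or strictly positive) is exactly the kind of elementary justification the authors had in mind. The reduction of item~2 to two applications of item~1 is also what the structure of the statement invites, and your final remark about why the critical point must lie in the open interval is the right point of care.
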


We now complete the proof of Theorem~\ref{t.qi_rect}.
\begin{proof}[Proof of Theorem~\ref{t.qi_rect}: Part 2 of 2  Allowing components as well as crossings]
We use Notations~\ref{n.f_t} and \ref{n.components.UandV}. According to Lemma~\ref{l.components.2}
\[
\limsup_{\eps\rightarrow 0}\prob\left[\forall i\in\{1,\dots,k_1+k_2\},\, N_p(i)=N_p^\eps(i)\right]=1\, .
\]
We take a subsequence $(\eps_k)_{k\geq 1}$ along which the $\limsup$ is reached. Approximating crossings of the $\calE_i$ by discrete crossings of the $\calE_i^{\eps_k}$ we get $\lim_{k\rightarrow+\infty}\prob\left[A^{\eps_k}\bigtriangleup A\right]=0$. Therefore, it is enough to show that for $\eps$ small enough
\[
\left|\prob\left[A^\eps\cap B^\eps\right]-\prob\left[A^\eps\right]\prob\left[B^\eps\right]\right|\leq \frac{C}{\sqrt{1-\eta^2}}(1+|p|)^4e^{-p^2}\prod_{i=1}^2\left(\textup{Area}(\calK_i)+\textup{Length}(\calC_i)+1\right)
\]
for some constant $C=C(\kappa)<+\infty$. Here, unlike in Proposition~\ref{p.eps_qi}, $A$ and $B$ are events generated not only by crossing and circuit events but also by the $N_p(i)$'s. Nonetheless the proof is quite similar. Indeed, notice that Proposition~\ref{p.eps_qi} follows from Proposition~\ref{p.kac-rice} which in turn uses only the fact that for two points $x,y$ to be pivotal, certain derivatives of $f_t$ must vanish on certain deterministic segments. This is proved in Lemmas~\ref{l.qi.piv_to_geometry}, \ref{l.interior} and \ref{l.boundary}. In our case, first, we combine Lemma~\ref{l.qi.piv_to_geometry} with Lemma~\ref{l.components.piv_to_geometry}
using Remark~\ref{r.piv}. Then, we use Lemma~\ref{l.components.white} in addition to Lemmas~\ref{l.interior} and \ref{l.boundary}. The rest of the proof of Proposition~\ref{p.eps_qi} applies as is.
\end{proof}

\section{Tassion's RSW theory: the proof of Theorem~\ref{t.RSW}}\label{s.Tassion}

In this section, we prove Theorem~\ref{t.RSW} by relying on Sections~\ref{s.qi.vector} and~\ref{s.weak-qi} (but not on Subsection~\ref{ss.components}) and on~\cite{tassion2014crossing}. Our proof follows~\cite{tassion2014crossing} so instead of writing the details of each proof, we point out the steps of the original proof that need to be modified to work in our setting. We expect the reader to be familiar with~\cite{tassion2014crossing} and suggest that this section be read with said work at hand. Note that this simplifies the proof of~\cite{bg_16} since we can directly apply Tassion's method in the continuum instead of applying it to different discretizations of the model at each scale. We first prove the following weaker result:
\begin{prop}\label{p.first_rsw}
Let $f$ be a Gaussian field satisfying Conditions~\ref{a.super-std},~\ref{a.std},~\ref{a.decay2} as well as Condition~\ref{a.pol_decay} for some $\alpha > 4$. Let $\rho > 0$. There exists $c=c(\kappa,\rho)>0$ such that, for each $s > 0$, the probability that there is a left-right crossing of $[0,\rho s] \times [0,s]$ in $\calD_0$ is at least $c$.
\end{prop}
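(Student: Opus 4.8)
The plan is to follow Tassion's strategy from~\cite{tassion2014crossing} in the continuum, using the quasi-independence estimate of Theorem~\ref{t.qi_rect} (restricted to crossing and circuit events, which is all that is needed here) as the replacement for the independence available in Bernoulli percolation. First I would set up the basic self-duality input: at level $p=0$, by planar duality (Remark~\ref{r.planar_duality}) and the symmetry assumptions in Condition~\ref{a.std}, the probability that $[0,s]^2$ is crossed left-right in $\calD_0$ equals the probability that it is \emph{not} crossed top-bottom in $\calD_0^c$, and by the $\frac{\pi}{2}$-rotation invariance these two crossing probabilities are equal; hence the probability of a left-right crossing of a square in $\calD_0$ is exactly $1/2$ for every $s>0$. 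This is the analog of the fact that crossing probabilities of squares are $1/2$ at criticality, and it is the seed that Tassion's argument amplifies into a genuine box-crossing estimate.

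Next I would introduce Tassion's width function. For a fixed (large) constant and each scale $s$, one defines a random-independent quantity measuring the largest $h$ such that certain crossing events in rectangles of height $\sim s$ and adjustable width occur with probability bounded below; concretely one considers, for $x\in[0,s]$, the events that there is a path in $\calD_0$ connecting the left side of $[0,s]\times[-s,s]$ (or a suitable sub-segment) to the segment $\{s\}\times[-x,x]$, and lets $\alpha_s(x)$ be the probability of this event. One sets $h_s=\sup\{x : \alpha_s(x)\le 1/4\}$ (the precise constants are as in~\cite{tassion2014crossing}). The two facts one must establish about $h_s$ are: (i) a \textbf{non-degeneracy / lower bound}, namely $h_s$ is bounded away from $0$ uniformly in $s$ (equivalently, $\alpha_s(0)$, a point-to-side connection probability, stays away from $1/4$), which follows from the square-crossing lower bound $1/2$ above together with an FKG/gluing argument using positive association (Condition~\ref{a.std}); and (ii) the \textbf{key gluing inequality}: if $h_s$ were too small at a sequence of scales, one combines $O(1)$ many translated/rotated copies of such crossing events in adjacent boxes, using FKG to intersect them and using Theorem~\ref{t.qi_rect} to pay only an error term $O(s^{4-\alpha})$ (which tends to $0$ since $\alpha>4$) each time two far-apart families of boxes are combined, to propagate crossings of slightly longer rectangles. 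Iterating this across a bounded number of scales, as in Tassion's induction, upgrades a crossing of $[0,s]\times[0,s]$ into a crossing of $[0,\rho s]\times[0,s]$ with probability bounded below, which is exactly the statement of Proposition~\ref{p.first_rsw}.

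I would carry this out in the order: (1) record self-duality and the $1/2$ square-crossing bound; (2) define $\alpha_s$, $h_s$ and prove $\liminf_s h_s>0$ via FKG and the $1/2$ bound; (3) prove the gluing/quasi-multiplicativity lemmas, replacing every use of independence of disjoint boxes in~\cite{tassion2014crossing} by an application of Theorem~\ref{t.qi_rect}, checking that the accumulated error terms stay summably small because each involves a factor $O(s^{4-\alpha})$ with $\alpha>4$ and the number of boxes at each stage is $O(1)$; (4) run Tassion's induction on scales to pass from $h_s$ bounded below to crossings of $[0,2s]\times[0,s]$, then to $[0,\rho s]\times[0,s]$ for any fixed $\rho$, with a constant $c=c(\kappa,\rho)>0$ uniform in $s$.

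The main obstacle I anticipate is the bookkeeping in step (3): in~\cite{tassion2014crossing} the disjointness of regions gives \emph{exact} independence, whereas here one only has the quantitative estimate of Theorem~\ref{t.qi_rect}, whose right-hand side grows with the areas and perimeters of the boxes involved. One must therefore ensure that all the boxes entering a given gluing step have perimeter $O(s)$ and are at mutual distance $\gtrsim s$ (so that $\eta=\sup|\kappa|$ is $O(s^{-\alpha})$ and the bound is $O(s^{4-\alpha}\cdot k_1k_2)$ with $k_1,k_2=O(1)$), and that these error terms do not accumulate catastrophically along the induction — this is where the condition $\alpha>4$ is used in an essential way, exactly as flagged in Remark~\ref{r.example}. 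A secondary technical point is that the crossing events here are genuinely continuum events; one handles this either by the approximation argument already used in the proof of Theorem~\ref{t.qi_rect} (Lemma~\ref{l.transversality.1}) or by noting that Theorem~\ref{t.qi_rect} applies directly to continuum crossings via Remark~\ref{r.planar_duality}, so no extra discretization is needed beyond what Sections~\ref{s.qi.vector}--\ref{s.weak-qi} already provide.
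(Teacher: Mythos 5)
Your proposal follows essentially the same strategy as the paper's proof: apply Tassion's method directly in the continuum, using the $1/2$ square-crossing probability from self-duality (Remark~\ref{r.planar_duality}) and FKG as the ``cheap'' inputs, and using the quasi-independence estimate of Theorem~\ref{t.qi_rect} (specialized to circuit events, as in Corollary~\ref{c.quasi-ind-for-Tassion}) wherever Tassion uses exact independence, with the accumulated error terms of order $s^{4-\alpha}$ going to zero because $\alpha>4$. Two small points your sketch glosses over: (a) Tassion's induction only gives the box-crossing bound for $s$ sufficiently large, so to cover \emph{all} $s>0$ as the proposition asserts you also need the elementary observation (via Pitt's Theorem~\ref{t.Pitt} and continuity of $f$) that for bounded $s$ the event $\{f\geq 1 \text{ on } [0,\rho s]\times[0,s]\}$ already has positive probability; and (b) Tassion's argument does not prove --- nor need --- that your ``width'' $h_s$ is uniformly bounded away from $0$; the actual structure is a two-scale dichotomy (the paper's Step~3, Tassion's Lemmas~2.2 and~3.1) together with the quasi-independence step (the paper's Lemma~\ref{l.tassion}, Tassion's Lemma~3.2) showing that circuits at scale $s$ force $\alpha_{s'}>s$ at a comparable scale $s'$, rather than a single-scale non-degeneracy claim.
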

Throughout the proof, in~\cite{tassion2014crossing}, Tassion uses symmetries of the model such as stationarity (which is satisfied here by Condition~\ref{a.super-std}), symmetries, and the FKG inequality (which are also valid here by Condition~\ref{a.std} and Lemma~\ref{l.FKG}). The final ingredient of the proof is a quasi-independence lemma, which we will state when needed. Otherwise, the proof carries over with only minor changes due to the specificities of the model.
\begin{proof}
\textit{Step 1}: By Remark~\ref{r.planar_duality}, the probability that there is a left-right crossing of $[-s,s]^2$ is $1/2$ for any $s\in]0,+\infty[$. In particular, it is uniformly bounded from below  by some constant $c_0>0$, which is just Equation (1) of~\cite{tassion2014crossing}. In other words
\begin{equation}\label{e.tassion.1}
\forall s>0,\ \prob\left[\cross_0(s,s)\right]\geq c_0\, .
\end{equation}

\textit{Step 2}: Given $s\in]0,+\infty[$ and $\alpha,\beta\in[0,s/2]$ such that $\alpha<\beta$, we define the events $\calH_s(\alpha,\beta)$ and $\calX_s(\alpha)$ as follows (see Figure~\ref{f.fig_de_Tassion} below): The event $\calH_s(\alpha,\beta)$ is satisfied whenever there is a continuous path in $[-s/2,s/2]^2 \cap \calD_0$ connecting $\{-s/2\}\times[-s,s]$ to $\{s/2\}\times[\alpha,\beta]$. The event $\calX_s(\alpha)$ is the event that there is a path $\gamma_1$ in $[-s/2,s/2]^2 \cap \calD_0$ connecting $\{-s/2\}\times[-s/2,-\alpha]$ to $\{-s/2\}\times[\alpha,s/2]$, a path $\gamma_2$ in $[-s/2,s/2]^2 \cap \calD_0$ connecting $\{s/2\}\times[-s/2,-\alpha]$ to $\{s/2\}\times[\alpha,s/2]$ and a path in $[-s/2,s/2]^2 \cap \calD_0$ connecting $\gamma_1$ to $\gamma_2$. As in~\cite{tassion2014crossing}, we define $\phi_s:[0,s/2]\rightarrow[-1,1]$ as
\[
\phi_s(\alpha)=\prob[\calH_s(0,\alpha)]-\prob[\calH_s(\alpha,s/2)]\, .
\]
Then, Lemma 2.1 of~\cite{tassion2014crossing}, says that for each $s\in]0,+\infty[$, there is $\alpha_s\in[0,s/2]$ such that, for some $c_1>0$ independent of $s$,
\begin{equation}\label{e.tassion.2}
\forall \alpha\in[0,\alpha_s],\ \prob\left[\calX_s(\alpha)\right]\geq c_1; \forall \alpha\in[\alpha_s,s/2],\ \prob\left[\calH_s(0,\alpha)\right]\geq c_0/4+\prob\left[\calH_s(\alpha,s/2)\right]\, .
\end{equation}
To establish this inequality, Tassion uses the fact that $\phi_s$ is continuous and increasing and defines $\alpha_s$ using the preimage of $\phi_s$ of a certain value. Here, the continuity of $\phi_s$ follows easily from the fact the $f$ is a.s. continuous and that for each $x\in\R^2$, $\var(f(x))>0$. Moreover, the fact that $\phi_s$ is non-decreasing is immediate from its definition and this is sufficient for us since the argument works if one replaces $\min\left\{\phi_s^{-1}(s/4),s/4\right\}$ by $\sup\{\alpha\in]0,s/4[\ :\ \phi_s(\alpha)\leq c_0/4\}$. The rest of the proof of Lemma 2.1 uses only symmetries, the FKG inequality and Equation~\ref{e.tassion.1} and works as is.

\begin{figure}[!h]
\begin{center}
\hspace{2em}
\includegraphics[scale=0.6]{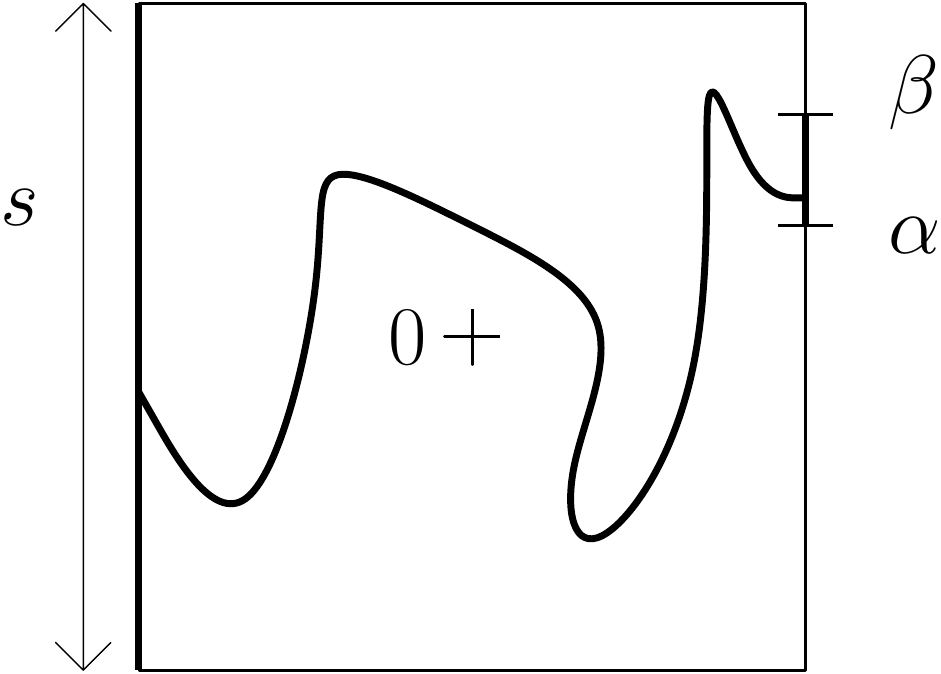} \hspace{2em} \includegraphics[scale=0.6]{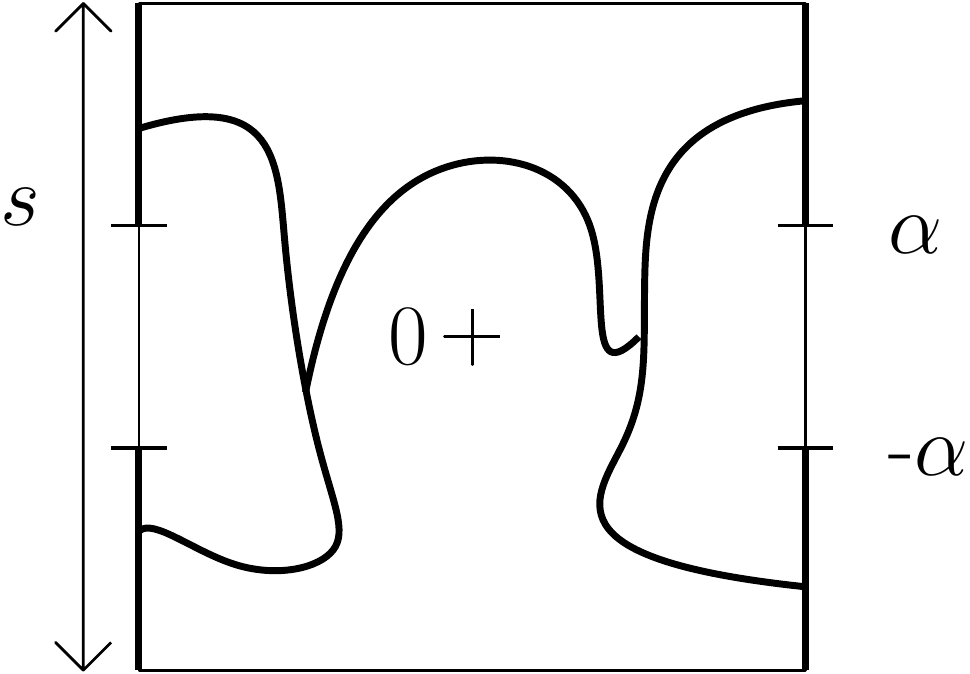}
\end{center}
\caption{The events $\mathcal{H}_s(\alpha,\beta)$ (left-hand-side) and 
$\mathcal{X}_s(\alpha)$ (right-hand-side). For every $\alpha \in [0,s/2]$, we let $\phi_s(\alpha)=\Pro [\calH_s (0,\alpha)] - \Pro [ \calH_s (\alpha, s/2)]$.}\label{f.fig_de_Tassion}
\end{figure}

\textit{Step 3}: For each $0<r<s$, let $\Circ_0(r,s)$ be the event that there is a circuit above level $0$ in the annulus $[-s,s]^2 \setminus ]-r,r[^2$ separating $[-s,s]^2$ from infinity. In Lemmas 2.2 and 3.1 of~\cite{tassion2014crossing}, Tassion shows that there exist constants $c_2,c_3\in]0,1[$ such that
\begin{equation}\label{e.tassion.3}
\forall s\geq 2,\ \alpha_s\leq 2\alpha_{2s/3}\Rightarrow \prob\left[\Circ_0(s,2s)\right]\geq c_2
\end{equation}
and
\begin{equation}\label{e.tassion.4}
\forall s\geq 1,\ t\geq 4s,\ \prob\left[\Circ_0(s,2s)\right]\geq c_2\text{ and }\alpha_t\leq s\Rightarrow \prob\left[\Circ_0(t,2t)\right]\geq c_3\, .
\end{equation}
The proof of these two lemmas relies only on the FKG, symmetries and Equations~\ref{e.tassion.2} so it carries over to our setting.\\

\textit{Step 4}: This is the step where Tassion uses a quasi-independence lemma. In our case, we will use the following direct consequence of Theorem~\ref{t.qi_rect}: 
\begin{cor}\label{c.quasi-ind-for-Tassion}
There exists a constant $C_0=C_0(\kappa)<+\infty$ such that, for every integer $N$ larger than $1$, for every $s \in [1,+\infty[$, for every $1 \leq r_1 \leq \cdots \leq r_N <+\infty$ such that $r_2 \geq r_1 + s$, and for every $B$ which belongs to the Boolean algebra generated by the events $\Circ_0(r_i,2r_i)$, $i = 2, \cdots, N$, we have:
\[
\big| \Pro \left[ \Circ_0(r_1,2r_1) \cap B \right] - \Pro \left[ \Circ_0(r_1,2r_1) \right] \Pro \left[ B \right] \big| \leq C_0 N  r_N^{4} \, s^{-\alpha} \, .
\]
\end{cor}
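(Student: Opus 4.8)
The plan is to deduce Corollary~\ref{c.quasi-ind-for-Tassion} directly from Theorem~\ref{t.qi_rect} by choosing the right families of rectangles and annuli to build the events. Set $A = \Circ_0(r_1,2r_1)$, which is measurable with respect to crossings/circuits of the single annulus $\calE_1 = [-2r_1,2r_1]^2 \setminus\, ]-r_1,r_1[^2$; so $k_1 = 1$, $\calK_1 = \calE_1$, $\calC_1 = \partial\calE_1$. For $B$, note that it lies in the Boolean algebra generated by the events $\Circ_0(r_i,2r_i)$ for $i = 2,\dots,N$, each of which is a circuit event above level $0$ in the annulus $\calE_i = [-2r_i,2r_i]^2\setminus\,]-r_i,r_i[^2$. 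Take these $N-1$ annuli as $\calE_2,\dots,\calE_N$, so $k_2 = N-1$, $\calK_2 = \bigcup_{i=2}^N \calE_i = [-2r_N,2r_N]^2 \setminus\,]-r_2,r_2[^2$ (since the annuli are nested once the radii are ordered), and $\calC_2 = \bigcup_{i=2}^N \partial\calE_i$. By the last paragraph of Subsection~\ref{ss.components} (or simply because Theorem~\ref{t.qi_rect} is already stated for crossing and circuit events), the hypotheses of Theorem~\ref{t.qi_rect} are met.

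Next I would check the distance hypothesis and estimate each geometric factor. The inner boundary of $\calK_2$ is $\partial([-r_2,r_2]^2)$ and the outer boundary of $\calK_1$ is $\partial([-2r_1,2r_1]^2)$; since $r_2 \geq r_1 + s$ we have $\dist(\calK_1,\calK_2) \geq r_2 - 2r_1$. This is not quite $\geq d$ in general, but we only need it $\geq d(\kappa)$, and we may freely enlarge $s$ (absorbing constants into $C_0$) or, more cleanly, observe that if $r_2 - 2r_1 < d$ the bound $C_0 N r_N^4 s^{-\alpha}$ can be made $\geq 1$ by choosing $C_0$ large, since then $s \leq 2r_1 + d - r_1 \le \dots$; in fact the honest route is to note $r_2\ge r_1+s$ forces $r_1$ small relative to $r_2$ only when $s$ is small, and handle small $s$ (say $s \le 2d$) trivially by bounding the left side by $1 \le C_0 r_N^4 s^{-\alpha}$ for $C_0$ large, and for $s \ge 2d$ use $\dist(\calK_1,\calK_2)\ge r_2-2r_1 \ge s - 2r_1$, which still need not be large if $r_1$ is large — so instead I would place the large annulus differently: reindex so that $A = \Circ_0(r_1,2r_1)$ sits \emph{inside} and $B$ refers to the larger radii, giving $\dist(\calK_1,\calK_2) \ge r_2 - 2r_1$, and separately note that $\Circ_0(r_i,2r_i)$ for $i\ge 2$ only involves the region outside $[-r_2,r_2]^2$, while $A$ only involves $[-2r_1,2r_1]^2$; when $2r_1 \ge r_2$ the annuli are not disjoint, but then $s \le r_2 - r_1 \le r_1$, so $r_N \ge r_2 > s$ and one can simply bound the covariance by $1$ against $C_0 N r_N^4 s^{-\alpha} \ge C_0 N$, valid for $C_0 \ge 1$. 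So in all cases the inequality holds, possibly after enlarging $C_0$; the nontrivial regime is $r_2 - 2r_1 \ge d$.

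In that regime I apply Theorem~\ref{t.qi_rect} with $p = 0$. The prefactor $(1+|p|)^4 e^{-p^2}$ equals $1$. The quantity $\eta = \sup_{x\in\calK_1,y\in\calK_2}|\kappa(x-y)|$ satisfies $\eta \le C|x-y|^{-\alpha} \le C(r_2-2r_1)^{-\alpha}$ by Condition~\ref{a.pol_decay}; taking $d$ larger if necessary we get $\eta \le 1/2$, so $\eta/\sqrt{1-\eta^2} \le 2\eta/\sqrt 3 \le C'(r_2-2r_1)^{-\alpha}$, and since $r_2 \ge r_1 + s$ we have $r_2 - 2r_1 \ge s - r_1$; to avoid the dependence on $r_1$ here I instead bound $\eta$ using the \emph{annular} separation: any $x \in \calK_1$ lies in $[-2r_1,2r_1]^2$ and any $y\in\calK_2$ lies outside $[-r_2,r_2]^2$, and $r_2 \ge r_1 + s$ gives $|x-y| \ge r_2 - 2r_1$; this is genuinely $\ge c\, s$ only when $r_1 = O(s)$. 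The clean fix: use that $B$ can equivalently be built from the annuli $\calE_i' = [-2r_i,2r_i]^2\setminus\,]-2r_1 - s,2r_1+s[^2$ — no: simplest is to observe $r_2 - 2r_1 \ge s$ fails in general, so we should \emph{assume} the Corollary is applied with the stronger spacing it actually needs; re-reading, $r_2 \ge r_1 + s$ with $r_1 \ge 1$ and $s \ge 1$ does give $r_2 \ge 2$, but separation $r_2 - 2r_1$ can be negative. I therefore conclude the intended reading is that $\Circ_0(r_1,2r_1)$ uses only the box $[-2r_1,2r_1]^2$ and each $\Circ_0(r_i,2r_i)$, $i\ge2$, uses only the complement of $[-r_i,r_i]^2 \supseteq$ complement of $[-r_2,r_2]^2$, so the relevant separation between the \emph{active} regions is $r_2 - 2r_1$ when positive and otherwise we are in the trivial regime above. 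With $\eta \le C(r_2-2r_1)^{-\alpha} \le C s^{-\alpha}$ once $r_2 - 2r_1 \ge s$ (which holds when $2r_1 \le r_2 - s$, i.e. generically) — and in the remaining case the trivial bound applies — the product $\prod_{i=1}^2(\ar(\calK_i) + \leng(\calC_i) + k_i)$ is $O(r_1^2)\cdot O(N r_N^2)$, i.e. $O(N r_N^4)$. Combining, $|\Pro[A\cap B] - \Pro[A]\Pro[B]| \le C'' N r_N^4 s^{-\alpha}$, which is the claim with $C_0 = \max(1, C'', \ldots)$.

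The main obstacle is purely bookkeeping: matching the separation hypothesis $r_2 \ge r_1 + s$ of the Corollary to the distance hypothesis $\dist(\calK_1,\calK_2) \ge d$ of Theorem~\ref{t.qi_rect} and to the $s^{-\alpha}$ in the conclusion, and making sure the geometric prefactors $\ar(\calK_i) + \leng(\calC_i) + k_i$ are correctly bounded by $O(r_N^2)$ and $O(N r_N^2)$ respectively (using that the areas of nested dyadic annuli telescope and the total boundary length of $N$ such annuli is $O(N r_N)$). Once the regime split (trivial bound when $2r_1 \ge r_2 - s$, Theorem~\ref{t.qi_rect} otherwise) is set up, no real analysis is needed beyond plugging $p=0$ into Theorem~\ref{t.qi_rect} and invoking Condition~\ref{a.pol_decay} for the decay of $\eta$.
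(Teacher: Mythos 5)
Your setup is the right one: apply Theorem~\ref{t.qi_rect} with $p=0$, $\calK_1$ the single square annulus $[-2r_1,2r_1]^2\setminus\,]-r_1,r_1[^2$ and $\calK_2=\bigcup_{i=2}^N[-2r_i,2r_i]^2\setminus\,]-r_i,r_i[^2$, and your estimates of the geometric factors ($\ar(\calK_1)+\leng(\calC_1)+k_1=O(r_1^2)\le O(r_N^2)$ and $\ar(\calK_2)+\leng(\calC_2)+k_2=O(Nr_N^2)$, hence product $O(Nr_N^4)$) are correct. The issue is with the regime split.

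Your ``trivial bound'' in the overlapping regime ($2r_1\ge r_2$) does not hold. You claim $C_0Nr_N^4s^{-\alpha}\ge C_0N$ for $C_0\ge 1$, i.e.\ $r_N^4\ge s^\alpha$. But from $2r_1\ge r_2\ge r_1+s$ you only get $r_1\ge s$ and hence $r_N\ge r_2\ge 2s$, so $r_N^4\ge 16s^4$. Since $\alpha>4$, the inequality $16s^4\ge s^\alpha$ fails for all large $s$, and nothing in the hypotheses forces $r_N$ to be comparable to $s^{\alpha/4}$. So in that regime your right-hand side tends to $0$ as $s\to\infty$ while the left-hand side is merely bounded by $1$, and the argument breaks. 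There is no way around this with the tools at hand: when $2r_1\ge r_2$ the sets $\calK_1$ and $\calK_2$ overlap and Theorem~\ref{t.qi_rect} gives no information. The only consistent reading of the corollary is that its separation hypothesis is really the stronger condition $r_2-2r_1\ge s$ (equivalently $r_2\ge 2r_1+s$), so that $\dist(\calK_1,\calK_2)\ge s$ and $\eta\le C s^{-\alpha}$ by Condition~\ref{a.pol_decay}; this is in fact what holds whenever the corollary is invoked in the paper (there $r_2=5r_1$ and $r_1\ge s$, so $r_2-2r_1=3r_1\ge 3s$). With that hypothesis your argument is complete after a single trivial case split for small $s$: if $s\le d(\kappa)$, bound the left side by $1\le C_0 N r_N^4 s^{-\alpha}$ since $r_N\ge 1$ and $s^{-\alpha}\ge d(\kappa)^{-\alpha}$; if $s>d(\kappa)$, the distance hypothesis of Theorem~\ref{t.qi_rect} is met, $\eta\le 1/2$ after enlarging $d$ if necessary so that $\eta/\sqrt{1-\eta^2}\le 2\eta\le C's^{-\alpha}$, and the product of geometric factors yields $C_0 N r_N^4 s^{-\alpha}$.
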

Using this corollary, we prove an analog of Lemma 3.2 of~\cite{tassion2014crossing}. Let us first introduce some notation. Given $c_0$ as in Step 1 and $c_2$ and $c_3$ as in Step 3, let $C_1<+\infty$ be such that $(1-c_3)^{\lfloor C_1/2\rfloor}<c_0/8$ and let $s_0<+\infty$ be such that for each $s\geq s_0$, $\frac{C_0}{c_3} \, \lfloor \log_5(C_1/2) \rfloor \, (C_1s)^4 s^{-\alpha} < c_0/8$ (where $C_0$ is as in Corollary~\ref{c.quasi-ind-for-Tassion}). Then, we prove the following:
\begin{lemma}\label{l.tassion}
Let $s\geq s_0$ such that $\prob\left[\Circ_0(s,2s)\right]\geq c_2$, then, there exists $s'\in [4s,C_1s]$ such that $\alpha_{s'}>s$.
\end{lemma}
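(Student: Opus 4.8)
The plan is to argue by contradiction, following the proof of Lemma~3.2 of~\cite{tassion2014crossing} step by step, the only genuinely new input being that Tassion's discrete quasi-independence lemma is replaced by Corollary~\ref{c.quasi-ind-for-Tassion}. So assume, for contradiction, that $\alpha_{s'}\leq s$ for every $s'\in[4s,C_1s]$; the goal is to contradict Equation~\eqref{e.tassion.1}. \textit{Step A (circuits at all intermediate scales):} since $\prob[\Circ_0(s,2s)]\geq c_2$ by hypothesis and $\alpha_t\leq s$ for every $t\in[4s,C_1s]$ by the contradiction assumption, Equation~\eqref{e.tassion.4} gives $\prob[\Circ_0(t,2t)]\geq c_3$ for every $t\in[4s,C_1s]$.

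\textit{Step B (quasi-independence: a circuit in the large annulus):} pick scales $t_1<\dots<t_m$ in $[4s,C_1s]$ spaced by a factor $5$, say $t_i=4s\cdot 5^{i-1}$ for $1\leq i\leq m:=\lfloor\log_5(C_1/2)\rfloor$, so that the square annulus on which $\Circ_0(t_i,2t_i)$ is defined is contained in $[-2t_i,2t_i]^2$, while the next one has inner radius $t_{i+1}>2\sqrt2\,t_i$; in particular any two of these annuli are pairwise at distance at least $s$ (recall $t_i\geq 4s$). Applying Corollary~\ref{c.quasi-ind-for-Tassion} repeatedly — each time to peel off the smallest remaining annulus, with separation parameter $s$, at most $m$ annuli involved, and largest radius at most $C_1s$ — and summing the resulting geometric series in $1-c_3$, one obtains
\[
\prob\Big[\bigcap_{i=1}^{m}\Circ_0(t_i,2t_i)^{\,c}\Big]\;\leq\;(1-c_3)^{m}\;+\;\frac{C_0\,m\,(C_1s)^4}{c_3}\,s^{-\alpha}\,.
\]
By the choice of $C_1$ (which makes the first term at most $c_0/8$) and of $s_0$ together with $s\geq s_0$ (which makes the second term at most $c_0/8$), with probability at least $1-c_0/4$ one of the events $\Circ_0(t_i,2t_i)$ occurs, producing a circuit above level $0$ inside the square annulus $[-C_1s,C_1s]^2\setminus\,]-4s,4s[^2$ centred at the origin.

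\textit{Step C (the contradiction):} it remains to contradict Equation~\eqref{e.tassion.1} using the circuit produced in Step~B and the contradiction assumption. This is carried out exactly as at the end of the proof of Lemma~3.2 of~\cite{tassion2014crossing}: one uses the second inequality of Equation~\eqref{e.tassion.2} with the choice $\alpha=s$ (licit since $s\geq\alpha_{s'}$ for every $s'\in[4s,C_1s]$), together with the symmetries of the field (Condition~\ref{a.std}) and the FKG inequality (Lemma~\ref{l.FKG}), to glue along the circuit of Step~B a crossing landing in a target of size of order $s$ at one scale to such a crossing at a scale larger by a factor $5$; iterating over the $m$ scales of Step~B forces $\prob[\calH_{s''}(0,s)]>1$ for some $s''\in[4s,C_1s]$, which is absurd. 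Throughout, we work directly with crossings and circuits of the continuous excursion set $\calD_0$; the duality required to run Step~2 and Step~C in the continuum is provided by Remark~\ref{r.planar_duality}, and this is the only point at which the continuous (rather than discrete) nature of the model enters, causing no additional difficulty. This establishes Lemma~\ref{l.tassion}.

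The step I expect to require the most care is Step~B. In contrast with the discrete framework of~\cite{tassion2014crossing}, the quasi-independence estimate of Corollary~\ref{c.quasi-ind-for-Tassion} carries an error term with the polynomial prefactor $r_N^4=(C_1s)^4$; hence the annuli must be chosen well separated (at distance $\gtrsim s$) and the scale $s$ must be large, and it is precisely here that the hypothesis $\alpha>4$ is used, since it is what makes $(C_1s)^4s^{-\alpha}$ tend to $0$ as $s\to+\infty$. Step~C, by contrast, is essentially a verbatim transcription of~\cite{tassion2014crossing} once the symmetries, the FKG inequality and Equation~\eqref{e.tassion.2} are available, and Step~A is an immediate consequence of Equation~\eqref{e.tassion.4}.
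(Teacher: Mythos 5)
Your proposal is correct and follows the same route as the paper: argue by contradiction, use Equation~\eqref{e.tassion.4} to get circuits $\Circ_0(t,2t)$ at geometrically spaced intermediate scales, replace Tassion's quasi-independence input by a repeated application of Corollary~\ref{c.quasi-ind-for-Tassion} to show that with probability $>1-c_0/4$ at least one of these circuits occurs (so that $\Circ_0(s,C_1 s)$ holds), and then plug this into Tassion's gluing argument in the proof of Lemma~3.2 of~\cite{tassion2014crossing}. The only cosmetic deviation from the paper is your choice of scales $t_i=4s\cdot 5^{i-1}$ rather than $5^i s$ with $i\geq 0$; the latter is what the paper uses, but your choice is equally valid (and avoids having to observe that $c_2\geq c_3$ for the $i=0$ annulus), and either way the constants $C_1$ and $s_0$ are precisely what are needed to make the geometric term and the quasi-independence error each smaller than $c_0/8$.
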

\begin{proof}
In the proof of his Lemma~3.2, Tassion uses FKG and the symmetry properties, as well as what we call Equations~\ref{e.tassion.2} and \ref{e.tassion.3}. The only place where he uses a quasi-independence property is where he proves that, if $\Pro \left[ \Circ_0(5^i s,2 \cdot 5^i s) \right] \geq c_3$ for any $i \in \{ 0, \cdots, \lfloor \log_5(C_1/2) \rfloor \}$ and if $s \geq s_0$, then:
\[
\Pro \left[ \Circ_0(s,C_1 s) \right] > 1-c_0/4 \, .
\]
In what follows, we prove such a result and we refer to~\cite{tassion2014crossing} for the rest of the proof. First note that:
\[
\Circ_0(s,C_1 s) \subseteq \cup_{i=0}^{\lfloor \log_5(C_1/2) \rfloor} \Circ_0(5^is,2 \cdot 5^is) \, .
\]
Now, by Corollary~\ref{c.quasi-ind-for-Tassion} applied $\lfloor \log_5(C_1/2) \rfloor - 1$ times:
\begin{align*}
& \Pro \left[ \left( \cup_{i=0}^{\lfloor \log_5(C_1/2) \rfloor} \Circ_0(5^is,2 \cdot 5^is) \right)^c \right]\\
& = \Pro \left[ \cap_{i=0}^{\lfloor \log_5(C_1/2) \rfloor} \Circ_0(5^is,2 \cdot 5^is)^c \right]\\
& \leq (1-c_3) \times \Pro \left[ \cap_{i=1}^{\lfloor \log_5(C_1/2) \rfloor} \Circ_0(5^is,2 \cdot 5^is)^c \right] -  C_0 \, \lfloor \log_5(C_1/2) \rfloor \, (C_1s)^4 s^{-\alpha}\\
& \leq \cdots\\
& \leq (1-c_3)^{\lfloor \log_5(C_1/2) \rfloor} + C_0\left(\sum_{j\geq 0} (1-c_3)^j\right) \, \lfloor \log_5(C_1/2) \rfloor \, (C_1s)^4 s^{-\alpha}\\
& \leq (1-c_3)^{\lfloor \log_5(C_1/2) \rfloor} + \frac{C_0}{c_3} \, \lfloor \log_5(C_1/2) \rfloor \, (C_1s)^4 s^{-\alpha}\\
& < c_0/4 \, ,
\end{align*}
where the last inequality follows from the definition of $C_1$ and the fact that $s \geq s_0$.
\end{proof}

\textit{Step 5}: As explained in the proof of Lemma 3.3 of~\cite{tassion2014crossing} and the final comment that follows it, Proposition~\ref{p.first_rsw} now follows for $s$ large enough from Equations~\eqref{e.tassion.2} and \eqref{e.tassion.3} and Lemma~\ref{l.tassion} as well as standard gluing constructions that use only the FKG inequality and from symmetries. By the FKG inequality\footnote{More precisely, the events $\{f\geq 1\textup{ on }B\}$ can be approximated by increasing events depending on a finite sets of points, to which one can apply the FKG inequality.} Theorem~\ref{t.Pitt} applied to events of the form $\{ f \geq 1 \text{ on } B \text{ translated by some vector}\}$, we obtain that, for each $s>0$, $f$ takes only values larger than or equal to $1$ on $[-s,s]^2$ with positive probability.
\end{proof}

We now prove Theorem~\ref{t.RSW}.
\begin{proof}[Proof of Theorem~\ref{t.RSW}]
We prove the result for $\calN_0$. This is sufficient since $\calN_0 \subseteq \calD_0$ and since the result for $\calD_0$ for $s$ less than some fixed constant can easily be proved as in the end of Proposition~\ref{p.first_rsw}.\\
Let $\calQ$ be a quad and note that there exist $\delta=\delta(\calQ)>0$, $n=n(\calQ),m=m(\calQ) \in \Z_{>0}$ and two sequences $(\calE_i)_{i=1}^n$ and $(\calE'_j)_{j = 1}^{m}$ of $2\delta \times \delta$ and $\delta \times 2\delta$ rectangles such that: i) if each $\calE_i$ (resp. $\calE_j$) is crossed lengthwise then $\calQ$ is crossed and ii) $\underset{x \in \cup_{i=1}^n \calE_i, \, y \in \cup_{j=1}^m \calE_j'}{\inf} |x-y| \geq \delta$. For each $s>0$, write $A_s$ (resp. $B_s$) for the event that each $s\calE_i$ is crossed (resp. each $s\calE_j'$ is dual-crossed) lengthwise. By stationarity, $\frac{\pi}{2}$-rotation invariance and Remark~\ref{r.planar_duality}, the crossing events of each of the rectangles above and below $0$ are bounded from below by the constant $c=c(\kappa,2)>0$ from Proposition~\ref{p.first_rsw}. Consequently, by Lemma~\ref{l.FKG}, for each $s>0$, $\Pro [A_s]\geq c^n$ and $\Pro  [B_s]\geq c^m$. But now, by Theorem~\ref{t.qi_rect}, there exists $C=C(\kappa)<+\infty$ such that, for each $s > 0$:
\[
\Pro \left[ A_s \cap B_s \right] \geq \Pro \left[ A_s \right] \Pro \left[ B_s\right] - C \, (\delta s + 1)^{4-\alpha} \, n m \, .
\]
Since, $\alpha > 4$ we have $C \, (\delta s + 1)^{4-\alpha} \, n m \underset{s \rightarrow +\infty}{\longrightarrow} 0$ so the left-hand-side is bounded from below by a positive constant for $s$ sufficiently large. But $A_s\cap B_s$ clearly implies the crossing of $s\calQ$ by $\calN_0$.
\end{proof}
Now that we have established Theorem \ref{t.RSW}, we apply it to obtain two results which are well known in Bernoulli percolation. Namely, the polynomial decay of the one-arm event: Proposition~\ref{p.arm}, and the absence of unbounded clusters at criticality: Proposition~\ref{p.kesten}. We are going to use the following notation:
\begin{notation}
If $0<r<s<+\infty$, we write $\calA(r,s)=[-s,s]^2 \setminus ]-r,r[^2$ and we write $\arm_0(r,s)$ (resp. $\arm_0^*(r,s)$) the event that there is a continuous path in $\calD_0 \cap \calA(r,s)$ (resp. in $\calA(r,s) \setminus \calD_0$) from the inner boundary of $\calA(r,s)$ to its outer boundary. 
\end{notation}
We start with the following result:
\begin{prop}\label{p.arm}
Let $f$ be a Gaussian field that satisfying Conditions~\ref{a.super-std},~\ref{a.std},~\ref{a.decay2} as well as Condition~\ref{a.pol_decay} for some $\alpha > 4$. There exists $C=C(\kappa)<+\infty$ and $\eta= \eta(\kappa) > 0$ such that, for each $1 \leq r < s +\infty$:
\[
\Pro \left[ \arm_0(r,s) \right] = \Pro \left[ \arm_0^*(r,s) \right] \leq C \, (r/s)^\eta \, .
\]
\end{prop}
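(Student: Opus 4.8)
The plan is to derive the polynomial decay of the one-arm event from the box-crossing property (Theorem~\ref{t.RSW}) together with the quasi-independence estimate (Theorem~\ref{t.qi_rect}), following the standard percolation strategy of multiplying decoupled circuit events across dyadic annuli. First I would record that the equality $\Pro[\arm_0(r,s)] = \Pro[\arm_0^*(r,s)]$ is a consequence of planar duality (Remark~\ref{r.planar_duality}): for an annulus, the existence of a crossing of $\calA(r,s)$ by $\calD_0$ from the inner to the outer boundary is, a.s., complementary to the existence of a circuit in $\calD_0^c$ separating the two boundaries, and by the $\frac\pi2$-rotation invariance of the field (Condition~\ref{a.std}) this dual circuit event has the same probability as $\arm_0(r,s)$ itself; combining these two facts gives the claimed equality.

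Next I would prove the upper bound. Fix a large integer $L$ (to be chosen depending only on $\kappa$) and consider the annuli $\calA(L^k, L^{k+1})$ for $k$ ranging over those values with $r \leq L^k$ and $L^{k+1} \leq s$; there are at least $\lfloor \log_L(s/r) \rfloor - 1 =: M$ such annuli, and they are pairwise at distance comparable to their inner radii. On each such annulus the event $\Circ_0^*(L^k, L^{k+1})$ that there is a circuit in $\calD_0^c$ separating the boundary components has probability bounded below by some $c_0 = c_0(\kappa, L) > 0$: this follows from Theorem~\ref{t.RSW} applied to the four rectangles forming the annulus (each of aspect ratio depending only on $L$) together with the FKG inequality (Lemma~\ref{l.FKG}) to glue them into a circuit, and duality to pass between $\calD_0$ and $\calD_0^c$. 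Since the occurrence of even one such dual circuit forbids the arm $\arm_0(r,s)$, we have $\arm_0(r,s) \subseteq \bigcap_k \Circ_0^*(L^k,L^{k+1})^c$. Now I would peel off the annuli one at a time using Theorem~\ref{t.qi_rect}: each $\Circ_0^*(L^k,L^{k+1})$ is measurable with respect to crossings of rectangles contained in $\calA(L^k,L^{k+1})$, whose perimeters are $O(L^{k+1})$ and whose distance to the union of the inner annuli is at least of order $L^k$; by Remark~\ref{r.example} the decoupling error in separating the $k$-th annulus from all the smaller ones is $O((L^{k+1})^{4-\alpha}) = O(L^{(k+1)(4-\alpha)})$, which, summed over $k$, is a geometric series dominated by its largest term $O((L\cdot r)^{4-\alpha})$ — in particular, for $r$ large and $L$ fixed this is at most, say, $c_0/2$ times a constant, and in any case summable. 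Iterating,
\[
\Pro\left[\arm_0(r,s)\right] \leq \Pro\left[\bigcap_k \Circ_0^*(L^k,L^{k+1})^c\right] \leq (1-c_0/2)^M + (\text{summed decoupling error}) \leq C' (1-c_0/2)^M,
\]
after absorbing the (geometrically small, hence bounded) total error, and increasing $C'$ to handle small $r$ where the bound is trivial. Writing $(1-c_0/2)^M \leq C (r/s)^\eta$ with $\eta = \eta(\kappa,L) > 0$ determined by $L$ and $c_0$, and choosing $L$ once and for all depending on $\kappa$, gives the proposition.

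The main obstacle I anticipate is \emph{making the decoupling bookkeeping clean}: Theorem~\ref{t.qi_rect} decouples two disjoint families of rectangles, so to peel annuli one at a time I must at each step group all the already-processed (smaller) annuli into one family $\calK_1$ and the current annulus into $\calK_2$, and verify that the distance between them stays comparable to the inner radius of the current annulus so that the error term really is $O(L^{(k+1)(4-\alpha)})$ rather than something worse; this requires choosing $L$ large enough that consecutive annuli are genuinely well-separated, and checking that the accumulated error $\sum_k O(L^{(k+1)(4-\alpha)})$, since $\alpha > 4$, is bounded by a constant (indeed small) so that it does not destroy the geometric decay. A secondary technical point is that the circuit events live on the boundary-rectangles of the annuli, so one must be slightly careful that the ``crossings of rectangles'' $\sigma$-algebra in Theorem~\ref{t.qi_rect} does capture $\Circ_0^*$ — but this is exactly the content of the setup preceding that theorem (circuits in annuli are included), so no genuine difficulty arises there. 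Everything else — the lower bound on circuit probabilities, the FKG gluing, the duality identity — is routine given Theorem~\ref{t.RSW} and the appendix lemmas.
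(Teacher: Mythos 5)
Your high-level strategy — geometrically spaced annuli, RSW+FKG lower bound on circuit probabilities, peel off one annulus at a time via Theorem~\ref{t.qi_rect} — is exactly the paper's. But there are three substantive differences and problems in the execution.

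First, and this is the key idea you miss: the paper does \emph{not} use annuli running from radius $r$ up to $s$. It uses annuli $\calA(5^i(rs)^{1-h}, 2\cdot 5^i(rs)^{1-h})$ for $i=0,\dots,\lfloor\log_5(s^h/(2r^{1-h}))\rfloor$, so the \emph{innermost} annulus already sits at the ``fractional geometric-mean'' scale $(rs)^{1-h}$. The point is that then every decoupling distance in the peeling is $\geq (rs)^{1-h}$, so every step's error is uniformly $O(\log\cdot\, s^4(rs)^{-\alpha(1-h)})$, which is a polynomially small quantity once $h$ is chosen so that $\alpha(1-h)>4$; the summation over steps is then trivial. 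With your annuli starting at radius $r$, the smallest annulus has decoupling error of order $r^{4-\alpha}$, which for $r=1$ is a \emph{constant}. Your sentence ``absorbing the (geometrically small, hence bounded) total error'' does not actually work at that step: the per-step error is not $\ll c_0$, so you cannot replace $(1-c_0)$ by $(1-c_0/2)$. One can still salvage your approach, but only by carefully exploiting the weights $(1-c_0)^{M-k}$ in the iterated inequality $Q_k\leq(1-c_0)Q_{k-1}+e_k$ — the small-$k$ errors are large but heavily suppressed by $(1-c_0)^{M-k}$ — and this requires choosing $L$ and analyzing the geometric sum of $(1-c_0)^{M-k}e_k$ explicitly, none of which appears in your sketch. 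The paper's choice of starting scale sidesteps all of this.

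Second, the annuli $\calA(L^k, L^{k+1})$ are \emph{contiguous}: the outer boundary of $\calA(L^{k-1},L^k)$ is the inner boundary of $\calA(L^k,L^{k+1})$, so they are at distance $0$, not ``comparable to their inner radii'' as you assert. Theorem~\ref{t.qi_rect} requires the two families to be at distance $>d(\kappa)$ and so does not apply. You would need to space the annuli, say $\calA(5^kr, 2\cdot 5^kr)$ as the paper does, and you would also need to discard the first few annuli when $r$ is small (so that the gaps exceed $d(\kappa)$).

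Third, the duality step. Your argument gives, as stated, $\Pro[\arm_0(r,s)]=1-\Pro[\text{circuit in }\calD_0^c]$ and then claims the latter circuit event has the same probability as $\arm_0(r,s)$, which would force $\Pro[\arm_0(r,s)]=1/2$, clearly false. The $\tfrac\pi2$-rotation invariance does not exchange $\calD_0$ and $\calD_0^c$. What one actually needs is that $f$ is \emph{centered}, so $f\overset{d}{=}-f$ exchanges $\calD_0$ with $\calD_0^c$ and gives $\Pro[\text{circuit in }\calD_0]=\Pro[\text{circuit in }\calD_0^c]$; combined with duality for both $\arm_0$ and $\arm_0^*$ this yields $\Pro[\arm_0(r,s)]=\Pro[\arm_0^*(r,s)]$.

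In short: the skeleton of the argument matches the paper, but you have the wrong annuli scale (the crucial trick that makes the decoupling errors uniformly negligible), an impossible distance estimate due to contiguous annuli, a non-rigorous error-absorption step, and an incorrect justification of the duality identity. Each of these is fixable, but as written the proof has genuine gaps.
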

\begin{proof}
Remark~\ref{r.planar_duality} and the fact that $f$ is centered imply that $\Pro \left[ \arm_0(r,s) \right] = \Pro \left[ \arm_0^*(r,s) \right]$. So let us prove the result only for $\arm_0^*(r,s)$. First fix $h\in [1/2,1[$ to be determined later. For each $i \in \{ 0, \cdots, \lfloor \log_5(\frac{s^h}{2 \cdot r^{1-h}}) \rfloor \}$, let $\Circ_0(i)$ denote the event that there is a circuit at level $0$ in the annulus $\calA(5^i (rs)^{1-h},2 \cdot 5^i (rs)^{1-h})$. Note that:
\[
\arm_0^*(r,s) \subseteq \bigcap_{i=0}^{\lfloor \log_5(\frac{s^h}{2 \cdot r^{1-h}}) \rfloor} \Circ_0(i)^c \, .
\]
Next, note that by Theorem~\ref{t.RSW} and by the FKG inequality Lemma~\ref{l.FKG}, there exists $c=c(\kappa) \in ]0,1[$ such that for each $i \in \{ 0, \cdots, \lfloor \log_5(\frac{s^h}{2 \cdot r^{1-h}}) \rfloor \}$, $\Pro \left[ \Circ_0(i) \right] \geq c$. Next, use the quasi-independence result Theorem~\ref{t.qi_rect} $\lfloor \log_5(\frac{s^h}{2 \cdot r^{1-h}}) \rfloor$ times to obtain that, for some $C'=C'(\kappa)<+\infty$ we have:
\begin{align*}
& \Pro \left[ \bigcap_{i=0}^{\lfloor \log_5(\frac{s^h}{2 \cdot r^{1-h}}) \rfloor} \Circ_0(i)^c \right]\\
& \leq (1-c) \times \Pro \left[ \bigcap_{i=1}^{\lfloor \log_5(\frac{s^h}{2 \cdot r^{1-h}}) \rfloor} \Circ_0(i)^c \right] - C' \, \lfloor \log_5(\frac{s^h}{2 \cdot r^{1-h}}) \rfloor \, (1+s^4) \, (rs)^{-\alpha(1-h)}\\
& \leq \cdots\\
& \leq (1-c)^{\lfloor \log_5(\frac{s^h}{2 \cdot r^{1-h}}) \rfloor} + C'\left(\sum_{j\geq 0} (1-c)^j\right) \, \lfloor \log_5(\frac{s^h}{2 \cdot r^{1-h}}) \rfloor \, (1+s^4) \, (rs)^{-\alpha(1-h)}\\
& \leq (1-c)^{\lfloor \log_5(\frac{s^h}{2 \cdot r^{1-h}}) \rfloor} + \frac{C'}{c} \, \lfloor \log_5(\frac{s^h}{2 \cdot r^{1-h}}) \rfloor \, (1+s^4) \, (rs)^{-\alpha(1-h)} \, .
\end{align*}
Since $\alpha>4$, we can find $h$ sufficiently small to obtain what we want. 
\end{proof}

From Proposition~\ref{p.arm} we get the following analog of the celebrated theorem by Harris~\cite{harris1960lower} (which states that, for Bernoulli percolation on $\Z^2$ with parameter $1/2$, there is no infinite cluster). This result was also obtained by Alexander in~\cite{alex_96} for stationary ergodic planar
fields satisfying an FKG-type inequality under some mild non-degeneracy assumptions.

\begin{prop}\label{p.kesten}
With the same hypotheses as Proposition~\ref{p.arm}, a.s. every connected
 component of $\calD_0$ is bounded.
\end{prop}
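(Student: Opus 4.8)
The plan is to run the classical ``no percolation at criticality'' argument, in the style of Harris and Kesten: once the one-arm probability is known to decay polynomially, as in Proposition~\ref{p.arm}, an unbounded connected component of $\calD_0$ becomes impossible. Thus the proof reduces to combining Proposition~\ref{p.arm} with a soft topological reduction, plus a Borel--Cantelli-type union bound over scales.

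First I would record the almost sure regularity of $\calD_0$. By Lemma~\ref{l.transversality.1}, almost surely $\calN_0$ is a smooth $1$-submanifold of $\R^2$ and $0$ is a regular value of $f$, so that $\calD_0=\{f\geq 0\}$ is almost surely a smooth $2$-manifold with boundary; in particular each of its connected components is path-connected. From now on I would work on this event of full probability. For each integer $r\geq 1$, let $E_r$ be the event that $\calD_0$ has an unbounded connected component meeting $[-r,r]^2$. The key point is the inclusion $E_r\subseteq \arm_0(r,s)$ for every real $s>r$: given such a component $C$, pick $z\in C\cap[-r,r]^2$ and $w\in C\setminus[-s,s]^2$, join them by a continuous path $\gamma$ inside $C$ (this is where path-connectedness is used), set $t_0=\inf\{t:\gamma(t)\notin[-s,s]^2\}$ and $t_1=\sup\{t\leq t_0:\gamma(t)\in[-r,r]^2\}$, and observe that $\gamma|_{[t_1,t_0]}$ is a continuous path contained in $\calD_0\cap\calA(r,s)$ joining $\partial[-r,r]^2$ to $\partial[-s,s]^2$; hence $\arm_0(r,s)$ holds. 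Consequently $E_r$ is contained in the measurable event $\bigcap_{s\in\Q,\,s>r}\arm_0(r,s)$, whose probability is at most $\inf_{s>r}\Pro[\arm_0(r,s)]=0$ by Proposition~\ref{p.arm}.

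Finally, every connected component of $\calD_0$ is nonempty and hence meets $[-r,r]^2$ for some integer $r\geq1$; therefore the event that $\calD_0$ has an unbounded component is contained in $\bigcup_{r\geq1}E_r$, a countable union of null events, and is thus itself null, which is the assertion. The argument is entirely elementary given Proposition~\ref{p.arm}; the only mildly delicate point, and the reason I would first invoke Lemma~\ref{l.transversality.1}, is ensuring that the components of $\calD_0$ are genuine path-connected sets, so that the arm event $\arm_0(r,s)$, which is defined in terms of continuous paths, is really forced by the presence of an unbounded component.
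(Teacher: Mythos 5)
Your proof is correct and uses essentially the same argument as the paper: a union bound over countably many locations/scales combined with the arm-probability decay of Proposition~\ref{p.arm}. The only cosmetic difference is that the paper unions over translates of the fixed box $[-1,1]^2$ (invoking translation invariance), whereas you union over the increasing boxes $[-r,r]^2$; you also spell out the topological reduction (unbounded component $\Rightarrow$ arm event) and the path-connectedness of components via Lemma~\ref{l.transversality.1}, both of which the paper leaves implicit.
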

\begin{proof}
By a union-bound and translation invariance, it is enough to prove that a.s. there is no unbounded component of $\calD_0$ which intersects $[-1,1]^2$, which is the case since by Proposition~\ref{p.arm}, $\Pro \left[ \arm_0(1,s) \right]$ goes to $0$ as $s$ goes to $+\infty$.
\end{proof}
The natural question arising from this proposition is whether or not this remains true for $\calD_p$ with $p>0$. This is the object of~\cite{rv_bf} where we prove that, for the Bargmann-Fock field, there is a unique unbounded connected component in $\calD_p$ as soon as $p>0$, thus obtaining the analogue of Kesten's famous theorem~\cite{kesten1980critical} (which states that the critical point for Bernoulli percolation on $\Z^2$ is $1/2$).

\section{Concentration from below of the number of nodal lines: the proof of Theorem~\ref{t.concentration_NS}}\label{s.concentration_NS}

In this section, we prove Theorem~\ref{t.concentration_NS} by using Theorem~\ref{t.NS} and our quasi-independence result Theorem~\ref{t.qi_rect}. The idea of the proof is the following. Let $\eps > 0$. We first tile the square $[-s/2,s/2]^2$ with $(r/s)^2$ mesoscopic squares of size $r$. Then, we use Theorem~\ref{t.NS} and our quasi-independence result Theorem~\ref{t.qi_rect} to prove that the density of $r \times r$ squares containing less than $r^2(c_{NS}-\eps)$ nodal components is asymptotically small. More precisely, we will note that, if the number of such squares is greater than $\delta(s/r)^2$, then there exist $\delta (s/r)^2/8$ such squares that are at distance at least $r$ from each other. By Theorem~\ref{t.qi_rect}, this has probability $\Pro \left[ \frac{N_0(r)}{r^2}-c_{NS} \leq -\eps \right]^{\delta(s/r)^2/8}$ up to errors involving terms of the form  $\sup_{x \, : \, |x| \geq r} |\kappa(x)|$. The last step is an optimization on the choice of $r$.\\
Upper concentration on the other hand seems to require some control of the tail of the density of nodal components. For the moment, it is not even known whether this density is $L^2$. This type of information seems necessary for the following reason. In Item (1) of Theorem~\ref{t.concentration_NS}, for instance, we consider exponential concentration of the density of components. To this end we write the number of components as a sum of quasi-independent random variables. But a direct consequence of Cramér's theorem is that, if $X_1, X_2, \cdots$ are i.i.d. $L^1$ positive random variables such that $\E \left[ e^{\theta X_1} \right] =+\infty$ for every $\theta > 0$, then $\left( \frac{X_1+\cdots+X_n}{n} \right)_n$ does not have exponential concentration around its mean. Note finally that to have an upper bound concentration, we need to take care of the mesoscopic components that intersect several $r \times r$ squares. However, these do not add any difficulty. Indeed, by~\cite{nazarov2015asymptotic}, if we write $N'(r)$ for the number of nodal components which \textit{intersect} a $r \times r$ box (and are not just included) then Theorem~\ref{t.NS} also holds for $N'(r)$ (with the same constant $c_{NS}$).

\begin{figure}[!h]
\begin{center}
\hspace{2em}
\includegraphics[scale=0.8]{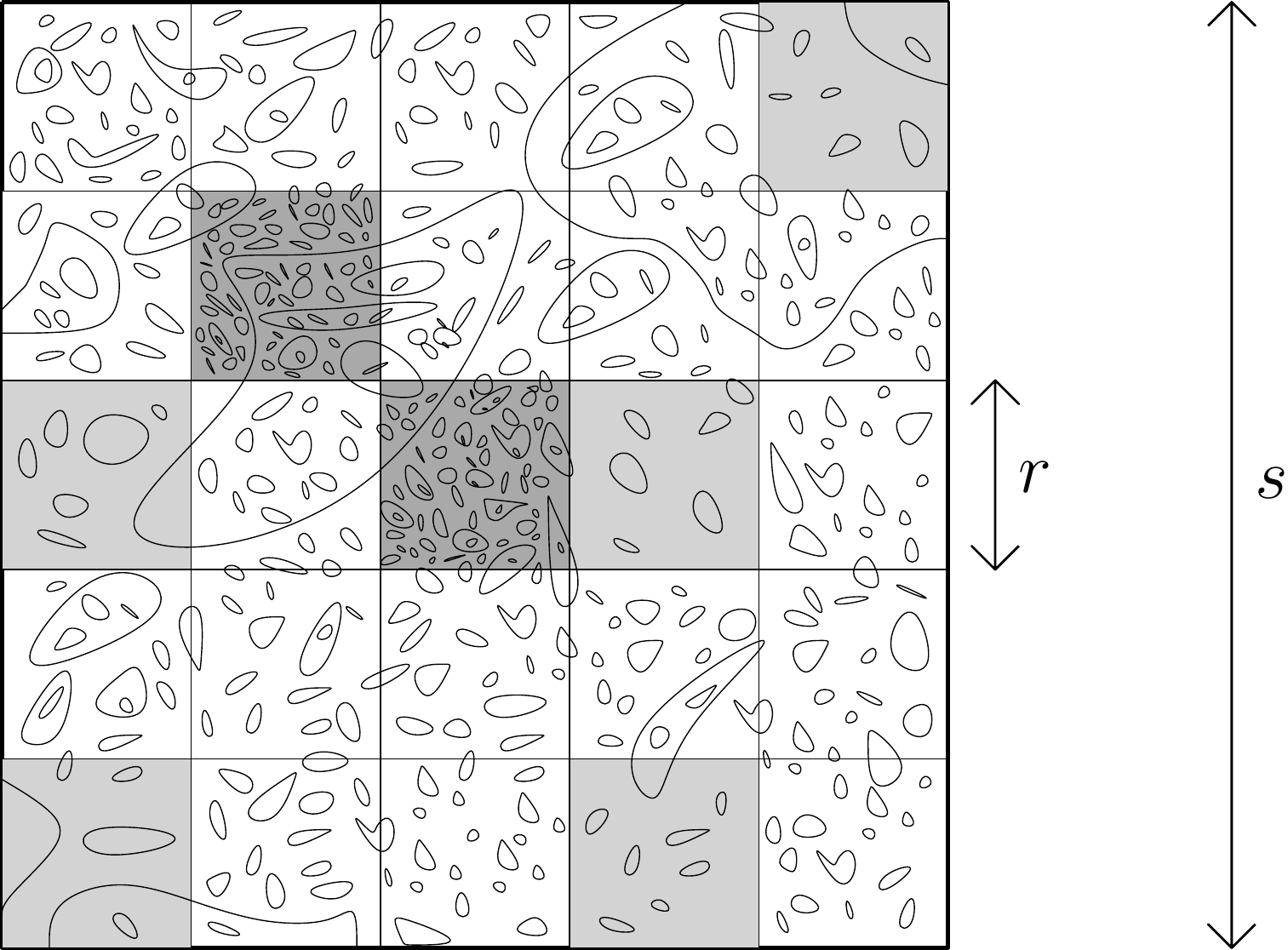}
\end{center}
\caption{The components of $\calN_0$ in $[-s/2,s/2]^2$. In light gray: the $r \times r$ squares in which the density of components is smaller than expected. Combining Theorem~\ref{t.NS} with Theorem~\ref{t.qi_rect}, we prove that that with high probability there are not too many such squares. In dark gray, the $r \times r$ squares in which the number of components is much greater than expected. Since we do not know whether or not the density of nodal component has an heavy tail, it is very hard to control these exceptional squares.}\label{f.components}
\end{figure}

\begin{proof}[Proof of Theorem~\ref{t.concentration_NS}]
Assume that $f$ is a planar Gaussian field satisfying Conditions~\ref{a.super-std},~\ref{a.decay2} and~\ref{a.spectral}. First note that it is sufficient to prove the result for $\eps$ sufficiently small and fix $\eps \in ]0,c_{NS}/2[$. Let $1 \leq r \leq s$ be such that $s \in r\N$ and tile the square $[-s/2,s/2]^2$ with $(s/r)^2$ $r \times r$ squares $S_1, \cdots, S_{(s/r)^2}$. Throughout the proof, we take the liberty of omitting floor functions. For each $t \in [0,+\infty[$, write $\kappa_t=\sup \{ |\kappa(x)| \,  : \, |x| \geq t \}$.\\
By Theorem~\ref{t.NS}, for each $h\in]0,1/2[$, there exist $r_0=r_0(\eps,h) < +\infty$ such that, if $r \geq r_0$, then:
\begin{equation}\label{e.NSeps2}
\Pro \left[ \frac{N_0(r)}{r^2} \leq c_{NS}-\eps \right] \leq h \, ,
\end{equation}
We also assume that $r_0$ is sufficiently big so that $ \kappa_{r_0} \leq 1/2$ and we assume that $r \geq r_0$. For every $i \in \{ 1, \cdots, (s/r)^2 \}$, write $N_0^i$ for the number of connected components of $\calN_0$ included in $S_i$ and note that, if $\frac{N_0(s)}{s^2} \leq c_{NS}-2\eps$, then there exist $(s/r)^2\frac{\eps}{c_{NS}-\eps}$ squares $S_i$ such that $\frac{N_0^i}{r^2}\leq c_{NS}-\eps$. As a result, if $\eta=\eta(\eps)=\frac{1}{8}\times\frac{\eps}{c_{NS}-\eps}$, there exist $\eta \cdot (s/r)^2$ squares $S_i$ at distance at least $r$ from each other and such that $\frac{N_0^i}{r^2} \leq c_{NS} -\eps$. Let $S_{i_1}, \cdots, S_{i_n}$ be $\eta \cdot (s/r)^2$ pairwise distinct squares among the $(s/r)^2$ squares at distance at least $r$ from each other. In the following, we estimate the probability that for each $j \in \{ 1, \cdots, \eta \cdot (s/r)^2 \}$, $\frac{N_0^{i_j}}{r^2}\leq c_{NS}-\eps$. Recall that $h<1/2$ and that $0<\eps<c_{NS}/2$ so $0<\eta<1/8$. By Theorem~\ref{t.qi_rect} applied $\eta \cdot (s/r)^2-1$ times, by translation invariance and by~\eqref{e.NSeps2}:
\begin{align*}
&\Pro \left[ \forall j \in \{ 1, \cdots, \eta \cdot (s/r)^2 \}, \, \frac{N_0^{i_j}}{r^2} \leq c_{NS}-\eps \right]\\
& \leq h \times \Pro \left[  \forall j \in \{ 2, \cdots, \eta \cdot (s/r)^2 \}, \, \frac{N_0^{i_j}}{r^2} \leq c_{NS}-\eps \right] + O \left( \kappa_r \, r^2 (s^2 + \eta \cdot (s/r)^2) \right)\\
& \leq h \times \Pro \left[  \forall j \in \{ 2, \cdots, \eta \cdot (s/r)^2 \}, \, \frac{N_0^{i_j}}{r^2} \leq c_{NS}-\eps \right] + O \left( \kappa_rr^2s^2\right)\\
& \leq \cdots\\
& \leq h^{\eta \cdot (s/r)^2}+ \,\left(\sum_{j\geq 0} h^j\right)O\left( \kappa_r \, r^2s^2 \right)= h^{\eta \cdot (s/r)^2}+ \, O\left( \kappa_r \, r^2s^2 \right)\\
& \leq h^{\eta \cdot (s/r)^2}+ \,\frac{1}{1-h}O\left( \kappa_r \, r^2s^2 \right)= h^{\eta \cdot (s/r)^2}+ \, O\left( \kappa_r \, r^2s^2 \right) \, .
\end{align*}
where the constants in the $O$'s depend only on $\kappa$. As a result :
\begin{eqnarray}
\Pro \left[ \frac{N_0(s)}{s^2} \leq c_{NS}-2\eps \right] & \leq & \binom{(s/r)^2}{\eta \cdot (s/r)^2} \left( h^{\eta \cdot (s/r)^2}+ O \left(\kappa_r \, r^2s^2 \right) \right) \nonumber\\
& \leq &  (2h^\eta)^{(s/r)^2} + O \left( 2^{(s/r)^2}\,\kappa_r \, r^2s^2 \right) \, . \label{e.nearly_finished}
\end{eqnarray}
Let us first treat the case of Item~1 i.e. assume that there exists $C<+\infty$ and $c>0$ such that $\kappa_r \leq C\exp(-cr^2)$. Then, the right hand side of \eqref{e.nearly_finished} is
\[
(2h^\eta)^{(s/r)^2}+\, O\left(\exp\left((s/r)^2\log(2)-cr^2+4\log(s)\right)\right) \, .
\]
Taking $h=h(\eta)$ small enough and $r=M\sqrt{s}$ for $M=M(c)$ large enough, this quantity is exponentially small in $s$ so we are done.\\

Let us now treat the case of Item~2 i.e. assume that there exists $C<+\infty$ and $\alpha>4$ such that $\kappa_r \leq C r^{-\alpha}$. Then, the right hand side of \eqref{e.nearly_finished} is
\[
(2h^\eta)^{(s/r)^2}+\, O\left(2^{(s/r)^2}s^2r^{2-\alpha}\right) \, .
\]
Fix $\delta > 0$. Choosing $r=s/\sqrt{a\log_2(s)}$ for $a=a(\delta)>0$ small enough, the second term in the sum is $O\left(s^{4-\alpha+\delta}\right)$. Having chosen $a$, we choose $h=h(a,\eta)$ such that the first term is also $O\left(s^{4-\alpha+\delta}\right)$. Since this is true for any $\eps\in]0,c_{NS}/2[$ and any $\delta>0$, we are done.
\end{proof}

\begin{remark}
Note that we have used Theorem~\ref{t.NS} only to obtain~\eqref{e.NSeps2}. Hence, our lower concentration result Theorem~\ref{t.concentration_NS} holds if, instead of Condition~\ref{a.spectral} (which is the assumption to apply Theorem~\ref{t.NS}), we assume that there exists a constant $c_{NS}=c_{NS}(\kappa) \in\, ]0,+\infty[$ such that, for each $\eps > 0$, $\Pro \left[ \frac{N_0(s)}{s^2} \leq c_{NS}-\eps \right]$ goes to $0$ as $s$ goes to $+\infty$.
\end{remark}

\appendix

\section{Classical tools}\label{s.basics}

In this section we present classical or elementary results about Gaussian vectors and fields.

\subsection{Classical results for Gaussian vectors and fields}

\paragraph{Differentiating Gaussian fields.} When one consider derivatives of Gaussian fields, it is important to have the following in mind (see for instance Appendices A.3 and A.9 of~\cite{nazarov2015asymptotic}):
\begin{lemma}\label{l.reg}
Let $f$ be an a.s. continuous Gaussian field with covariance\footnote{Here and below, $C^{l,l}$ means that all partial derivatives of $K$ which include at most $l$ differentiations in the first variable and $l$ differentiations in the second variable exist and are continuous.} $K\in C^{k+1,k+1}(\R^n\times\R^n)$ and mean $\mu\in C^k(\R^n)$. Then, $f$ is almost surely $C^k$. Conversely, if a.s. $f$ is $C^k$, then $K \in C^{k,k}$, $\mu\in C^k$ and for every multi-indices $\beta,\gamma \in \N^2$ such that $\beta_1+\cdots+\beta_n \leq k$ and $\gamma_1+\cdots+\gamma_n \leq k$, we have:
\[
\cov \left( \partial^\beta f(x), \partial^\gamma f(y) \right) = \E \left[ (\partial^\beta f(x)-\partial^\beta\mu(x)) (\partial^\gamma f(y)-\partial^\gamma\mu(y)) \right] =  \partial^\beta_x\partial^\gamma_y K(x,y) \, .
\]
\end{lemma}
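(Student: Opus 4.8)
The plan is to prove the two directions separately, both relying on the standard fact that for Gaussian fields, $L^2$-regularity and a.s.\ regularity are tightly linked, and that mean-square differentiation corresponds to differentiating the covariance. For the first (easy) direction, I would invoke the classical Kolmogorov--Chentsov type criterion together with the fact that a Gaussian field with covariance $K \in C^{k+1,k+1}$ and mean $\mu \in C^k$ has, after centering, a version whose $k$-th order mean-square derivatives exist and are themselves a.s.\ continuous Gaussian fields; the extra degree of smoothness ($k+1$ rather than $k$) is exactly what is needed to get a modulus-of-continuity estimate strong enough to apply the Kolmogorov criterion to the $k$-th derivatives. Concretely one checks that $\E[(\partial^\beta f(x) - \partial^\beta f(y))^2]$ is controlled by the modulus of continuity of the appropriate partial derivatives of $K$ near the diagonal, which in turn is controlled because $K \in C^{k+1,k+1}$. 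This is standard and I would cite Appendix~A of~\cite{nazarov2015asymptotic} rather than reproduce it.

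For the converse direction, assume $f$ is a.s.\ $C^k$. The key point is that a.s.\ differentiability forces mean-square differentiability: if $e_i$ is a coordinate direction and $h \to 0$, then $\frac{f(x+he_i)-f(x)}{h}$ converges a.s.\ to $\partial_i f(x)$, and since these difference quotients are jointly Gaussian, a.s.\ convergence of a sequence of Gaussian random variables implies convergence of all moments and in particular $L^2$-convergence (the limit is again Gaussian, and for jointly Gaussian families a.s.\ convergence upgrades to $L^p$ convergence for all $p$). Iterating this $k$ times, all partial derivatives $\partial^\beta f$ with $|\beta| \le k$ exist as $L^2$-limits as well as a.s.\ limits. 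Then $\cov(\partial^\beta f(x), \partial^\gamma f(y))$ is obtained by passing to the limit in the difference quotients inside the bilinear form $\cov(\cdot,\cdot)$, which is legitimate by $L^2$-convergence; this yields $\cov(\partial^\beta f(x), \partial^\gamma f(y)) = \partial^\beta_x \partial^\gamma_y \cov(f(x), f(y)) = \partial^\beta_x \partial^\gamma_y K(x,y)$, and in particular these mixed partials of $K$ exist. Continuity of $\partial^\beta_x \partial^\gamma_y K$ follows because it equals $\cov(\partial^\beta f(x), \partial^\gamma f(y))$ and $\partial^\beta f$, $\partial^\gamma f$ are a.s.\ continuous Gaussian fields, hence continuous in $L^2$, so the covariance is jointly continuous. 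The statement about $\mu$ is the deterministic shadow of the same computation: $\E[\partial^\beta f(x)] = \partial^\beta \E[f(x)] = \partial^\beta \mu(x)$ by the same $L^2$-limit argument, so $\mu \in C^k$.

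The main obstacle — really the only non-bookkeeping point — is justifying the upgrade from a.s.\ convergence of the Gaussian difference quotients to $L^2$-convergence, and the interchange of limit and covariance; both are standard consequences of hypercontractivity / the closedness of Gaussian subspaces of $L^2$, so in practice this lemma is a citation rather than an argument, and I would present it as such, deferring to~\cite{nazarov2015asymptotic} (Appendices A.3 and A.9) for the details. \qed
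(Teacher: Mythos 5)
Your proposal is correct and matches the paper's treatment: the paper itself gives no proof of Lemma~\ref{l.reg} and simply refers to Appendices A.3 and A.9 of Nazarov--Sodin, exactly as you do. The sketch you add (Kolmogorov-type criterion for the forward direction; a.s.\ convergence of Gaussian difference quotients upgrading to $L^2$ convergence, hence interchange of limit and covariance, for the converse) is a sound account of the standard argument behind that citation.
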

\begin{remark}\label{r.odd}
Lemma~\ref{l.reg} has the following consequence: if $f$ satisfies Condition~\ref{a.super-std} and is a.s. $C^1$ then, for each $\beta\in\N^2$ such that $\beta_1+\beta_2$ is odd, $\partial^\beta\kappa(0)=0$.
\end{remark}
\begin{remark}\label{r.bm}
Another consequence of Lemma~\ref{l.reg} is that if $f$ is a.s. $C^1$ and satisfies Condition~\ref{a.super-std} then for each $x \in \R^2$ and for $v,w$ non-colinear unit vectors, the Gaussian vector $(\partial_vf(x),\partial_wf(x))$ is non-degenerate. Indeed, if this was not the case, then we would obtain the existence of some non-zero vector $u$ such that $\partial_uf$ would a.s. vanish identically, which would contradict the fact that $f$ is non-degenerate. Similarly, if $f$ is a.s. $C^2$ and satisfies Condition~\ref{a.super-std} then for each $x \in \R^2$ and each non-zero vector $w \in \R^2$, $(f(x),\partial_wf(x),\partial_w^2f(x))$ is non-degenerate. Indeed, $\partial_wf(x)$ is independent of the two other coordinate by Remark~\ref{r.odd} and if $(f(x),\partial_w^2f(x))$ were degenerate then as above this would contradict the fact that $f$ is non-degenerate.
\end{remark}

\paragraph{A FKG inequality for Gaussian vectors.} The following result by~\cite{pitt1982positively} says that positively correlated Gaussian vectors satisfy positive association. This is a key result when one wants to use Russo-Seymour-Welsh type techniques. We first need to introduce the following terminology: if $I$ is some set and $A \subseteq \R^I$ then we say that $A$ is increasing if for every $\omega \in A$ and every $\omega' \in \R^I$ such that $\omega'(i) \geq \omega(i)$ for every $i \in I$, we have $\omega' \in A$.

\begin{thm}[\cite{pitt1982positively}]\label{t.Pitt}
Let $(X_k)_{1\leq k\leq n}$ be a Gaussian vector such that, for every $k,l\in\{1,\dots,n\}$, $\E[X_kX_l]\geq 0$. Then, For every $A,B\subseteq\R^n$ increasing Borel subsets:
\[
\prob[X\in A\cap B]\geq \prob[X\in A] \, \prob[X\in B]\, .
\]
\end{thm}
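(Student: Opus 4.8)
\emph{Proof proposal.} The plan is to prove this by a Slepian-type interpolation, i.e. exactly the technique underlying Proposition~\ref{p.general_formula}, but now with the two events attached to the \emph{same} copy of the Gaussian vector. Writing $F = \un_A$ and $G = \un_B$, the claim is $\E[F(X)G(X)] \geq \E[F(X)]\,\E[G(X)]$.

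\textbf{Reductions.} First I would reduce to the case where $F, G \in C^\infty(\R^n)$ are bounded, have bounded first derivatives, and satisfy $\partial_i F \geq 0$ and $\partial_i G \geq 0$ for all $i$: an increasing Borel set can be approximated from inside, in the law of $X$, by increasing open sets, whose indicators are increasing limits of smooth monotone functions obtained by mollification against a nonnegative kernel (which preserves monotonicity); dominated convergence then transfers the inequality. Second, I would assume $\Sigma$ is non-degenerate, replacing $\Sigma$ by $\Sigma + \delta I$ (which keeps all entries nonnegative) and letting $\delta \to 0$ otherwise.

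\textbf{Interpolation and differentiation.} Let $W, W^{(1)}, W^{(2)}$ be i.i.d. centered Gaussian vectors with covariance $\Sigma$, and for $t \in [0,1]$ set $Z_t^{(1)} = \sqrt{t}\,W + \sqrt{1-t}\,W^{(1)}$, $Z_t^{(2)} = \sqrt{t}\,W + \sqrt{1-t}\,W^{(2)}$, so that $(Z_t^{(1)}, Z_t^{(2)})$ is centered Gaussian in $\R^{2n}$ with covariance
\[
\Sigma(t) = \begin{pmatrix} \Sigma & t\Sigma \\ t\Sigma & \Sigma \end{pmatrix},
\]
which is non-degenerate for $t \in [0,1)$. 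Put $\psi(t) = \E[F(Z_t^{(1)})\,G(Z_t^{(2)})]$; then $\psi(0) = \E[F(X)]\,\E[G(X)]$ and $\psi(t) \to \E[F(X)G(X)]$ as $t \to 1$ by continuity, so it suffices to show $\psi' \geq 0$ on $(0,1)$. Only the off-diagonal blocks of $\Sigma(t)$ vary, with $\tfrac{d}{dt}\Sigma(t)_{i,n+j} = \Sigma_{ij} \geq 0$ for $i,j \in \{1,\dots,n\}$. Differentiating under the integral sign, using the heat-kernel identity $\partial\Gamma/\partial\Sigma_{i,n+j} = \partial^2\Gamma/\partial x_i\partial x_{n+j}$ (Equation~\eqref{e.heat}), and integrating by parts once in the $i$-th and once in the $(n+j)$-th variable (boundary terms vanish since $\Gamma$ decays and $F, G$ have bounded derivatives), one obtains
\[
\psi'(t) = \sum_{i,j=1}^n \Sigma_{ij}\,\E\!\left[ \partial_i F(Z_t^{(1)})\,\partial_j G(Z_t^{(2)}) \right] \geq 0,
\]
since each $\Sigma_{ij} \geq 0$ and each expectation is nonnegative ($\partial_i F, \partial_j G \geq 0$ pointwise). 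This proves the inequality.

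\textbf{Main obstacle.} The analytic core — the interpolation and the integration-by-parts identity — is identical to that of Proposition~\ref{p.general_formula} and presents no real difficulty. The only genuinely delicate point is the first reduction: approximating an arbitrary \emph{increasing Borel} set (not merely open or closed) by smooth monotone functions in the appropriate $L^1$ sense, together with the degeneracy regularization. These steps are standard but must be written out with some care.
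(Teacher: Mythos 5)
The paper does not prove Theorem~\ref{t.Pitt}; it is cited verbatim from Pitt's 1982 article and used as a black box, so there is no in-paper argument to compare against. That said, your proposal is a correct and standard proof of the Gaussian FKG inequality by Slepian-type interpolation, and it is genuinely the same analytic engine that the paper itself deploys for Proposition~\ref{p.general_formula}: the chain rule combined with the identity~\eqref{e.heat} to turn a $\partial/\partial\Sigma_{ij}$-derivative into $\partial^2/\partial x_i\partial x_j$, followed by integration by parts. The only structural difference is that Proposition~\ref{p.general_formula} decorrelates \emph{two blocks of coordinates within a single copy} of $X$, and so the $\eps_i$-sign bookkeeping of Lemma~\ref{l.threshold} is needed to control the resulting boundary terms; in your setting you instead correlate \emph{two independent copies} $Z_t^{(1)},Z_t^{(2)}$, and because $F$ and $G$ live on disjoint blocks the integration by parts yields $\partial_i F \cdot \partial_j G$ directly, whose sign is controlled by monotonicity alone. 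The computation itself is right: only the off-diagonal block of $\Sigma(t)$ varies, $\tfrac{d}{dt}\Sigma(t)_{i,n+j}=\Sigma_{ij}\geq 0$, $\Sigma(t)$ is positive definite for $t\in[0,1)$ since its spectrum is $\{(1\pm t)\lambda\}$ with $\lambda$ ranging over the spectrum of $\Sigma$, and $\psi(t)\to\E[FG]$ as $t\to 1^-$ by bounded continuity of $FG$.

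The one place you should flag to yourself as requiring genuine care is the first reduction, as you already suspect. Inner regularity gives a compact $K\subseteq A$ approximating $A$ in $\mu$-measure; replacing $K$ by its increasing hull $K+\R_{\geq 0}^n$ (which is still contained in $A$, is closed, and is increasing) gives an increasing closed approximant $C$. Then convolve $\un_C$ with a smooth compactly supported nonnegative kernel: the result is smooth and coordinatewise nondecreasing, and it converges to $\un_C$ $\mu$-a.e.\ because the topological boundary of a closed increasing set has Lebesgue measure zero (by Fubini, each line parallel to a coordinate axis meets $\partial C$ in at most one point). That last measure-zero fact is the one nontrivial sentence you would need to include; everything else in the reduction and in the $\delta\to 0$ regularization of $\Sigma$ is routine dominated convergence.
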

This type of inequality is known as the \textbf{Fortuyn-Kasteleyn-Ginibre} (or \textbf{FKG}) inequality. Pitt's result easily generalizes to crossing and circuits events by approximation, one just needs to take care that the approximating events are increasing, see Lemma~\ref{l.FKG}.
\paragraph{Some basic lemmas.} The following lemma is useful to bound the expectation of the product of Gaussian variables. The first lemma is known as the regression formula and is quite classical in the field.
\begin{lemma}[Proposition 1.2 of \cite{azws}]\label{l.regression}
Let $(X,Y)$ be an $n+m$-dimensional centered Gaussian vector with covariance
\[
\left(\begin{matrix} A & B\\ B^t & D\end{matrix}\right)
\]
where $A$ (resp. $D$) is the covariance of $X$ (resp. $Y$). Assume $Y$ is non-degenerate. Then, the law of $X$ conditioned on $Y$ is that of a Gaussian vector with covariance $A-BD^{-1}B^t$ and mean $BD^{-1}Y$.
\end{lemma}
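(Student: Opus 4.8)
The plan is to exhibit an explicit affine decomposition of $X$ in terms of $Y$ plus an independent Gaussian vector, and then invoke the standard ``freezing'' property of conditional expectations. Since $Y$ is non-degenerate, its covariance $D$ is invertible, so I would set $Z := X - B D^{-1} Y$. As a linear image of the centered Gaussian vector $(X,Y)$, the pair $(Z,Y)$ is again centered and jointly Gaussian.

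The key computation is that $Z$ and $Y$ are uncorrelated: using $\mathrm{Cov}(X,Y)=B$ and $\mathrm{Cov}(Y,Y)=D$, one gets $\mathrm{Cov}(Z,Y) = B - B D^{-1} D = 0$. Joint Gaussianity then upgrades this to independence of $Z$ and $Y$. A second, purely linear-algebraic computation (using that $D$ is symmetric) gives $\mathrm{Cov}(Z,Z) = A - B D^{-1} B^t - (B D^{-1} B^t)^t + B D^{-1} D D^{-1} B^t = A - B D^{-1} B^t$, so that $Z$ is a centered Gaussian vector with covariance $A - B D^{-1} B^t$.

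It then remains to conclude. Writing $X = Z + B D^{-1} Y$ with $Z$ independent of $Y$, for every bounded measurable $\varphi$ we have $\mathbb{E}[\varphi(X) \mid Y] = \big(\mathbb{E}[\varphi(Z + y)]\big)\big|_{y = B D^{-1} Y}$, which is precisely the integral of $\varphi$ against the Gaussian law of mean $B D^{-1} Y$ and covariance $A - B D^{-1} B^t$. This identifies the regular conditional law of $X$ given $Y$ and finishes the proof. No step is genuinely hard here: everything reduces to the two covariance computations above and to the elementary fact that uncorrelated jointly Gaussian vectors are independent; the only point deserving a line of justification is the last displayed identity, which is the standard substitution lemma for conditional expectations with respect to an independent random variable.
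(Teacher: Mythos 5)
Your proof is correct and complete: the paper itself offers no proof of this lemma, simply citing Proposition~1.2 of~\cite{azws}, and your decomposition $X = Z + BD^{-1}Y$ with $Z := X - BD^{-1}Y$ independent of $Y$ is precisely the standard regression argument used there. Both covariance computations are right (using that $D$, hence $D^{-1}$, is symmetric), and the final ``freezing'' step correctly identifies the regular conditional law.
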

The next lemma is a simple application of the regression formula to the computation of conditional moments of Gaussian vectors.
\begin{lemma}\label{l.prod_Gaussian}
Let $(X,Y)$ be a centered Gaussian vector in $\R^n\times\R^m$ with covariance
\[
\left(\begin{matrix} A & B\\
B^t & D\end{matrix}\right)\, .
\]
Assume that $D$ is non-degenerate. Let $\mu\in\R^m$. Then, there exists $C=C(n)<+\infty$ such that
\[
\E\left[\prod_{i=1}^n |X_i|\cond Y=\mu\right]\leq C\max_{i\in\{1,\dots,n\};\, j,k\in\{1,\dots,m\}}\left(\sqrt{A_{ii}}\vee |B_{ik}D^{-1}_{kj}\mu_j|\right)^n
\]
\end{lemma}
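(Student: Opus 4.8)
The plan is to remove the conditioning by means of the regression formula (Lemma~\ref{l.regression}) and then to estimate the moments of the resulting unconditional Gaussian vector by H\"older's inequality together with elementary one-dimensional Gaussian moment bounds. No genuine difficulty is expected; the argument is essentially bookkeeping.

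First I would apply Lemma~\ref{l.regression} (which is available since $D$ is non-degenerate): the conditional law of $X$ given $Y=\mu$ is that of a Gaussian vector $Z$ with mean $m:=BD^{-1}\mu$ and covariance $\Sigma:=A-BD^{-1}B^t$. Thus it suffices to bound $\E\left[\prod_{i=1}^n|Z_i|\right]$. For this I would use the generalized H\"older inequality, taking all exponents equal to $n$ (so that their reciprocals sum to $1$), which gives
\[
\E\left[\prod_{i=1}^n|Z_i|\right]\leq\prod_{i=1}^n\left(\E\left[|Z_i|^n\right]\right)^{1/n}\, .
\]
Each coordinate $Z_i$ is a real Gaussian variable with mean $m_i$ and variance $\Sigma_{ii}$, so writing $Z_i=m_i+\sqrt{\Sigma_{ii}}\,G$ with $G$ a standard normal variable and using that $\E[|G|^n]$ is a finite constant depending only on $n$, one obtains $\left(\E[|Z_i|^n]\right)^{1/n}\leq C'_n\left(|m_i|\vee\sqrt{\Sigma_{ii}}\right)$ for some $C'_n$ depending only on $n$.

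It then remains to bound $|m_i|$ and $\sqrt{\Sigma_{ii}}$ by the entries appearing in the statement. Since $D$ is positive definite, so is $D^{-1}$, and hence $BD^{-1}B^t$ is positive semi-definite; in particular $\Sigma_{ii}=A_{ii}-(BD^{-1}B^t)_{ii}\leq A_{ii}$, so $\sqrt{\Sigma_{ii}}\leq\sqrt{A_{ii}}$. For the mean, expanding the matrix product, $|m_i|=\left|\sum_{j,k}B_{ik}(D^{-1})_{kj}\mu_j\right|\leq m^2\max_{j,k}\left|B_{ik}(D^{-1})_{kj}\mu_j\right|$. Inserting these two bounds into the H\"older estimate above and multiplying over $i\in\{1,\dots,n\}$ yields the claimed inequality, the combinatorial factor coming from the expansion of $m_i$ being absorbed into the final constant.

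The only points calling for a little care — rather than being true obstacles — are: the choice of exponents in H\"older's inequality (each of the $n$ variables appears with exponent $n$); the reduction of $\E[|Z_i|^n]$ to $|m_i|$ and $\sqrt{\Sigma_{ii}}$ via the triangle inequality and centered Gaussian moments; and the use of positive semi-definiteness of $BD^{-1}B^t$ to dominate $\Sigma$ by $A$ along the diagonal.
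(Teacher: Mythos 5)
Your proof is essentially identical to the paper's: both invoke the regression formula to replace the conditional law by a Gaussian vector with covariance $A-BD^{-1}B^t$ and mean $BD^{-1}\mu$, both bound the conditional variance by $A_{ii}$ via positive semi-definiteness of $BD^{-1}B^t$, and both bound the conditional mean by an entrywise triangle inequality; the paper then cites an ``elementary observation'' for the final product-moment estimate, which you usefully make explicit via generalized H\"older and one-dimensional Gaussian moments. One small point worth flagging, shared with the paper's own proof: the triangle-inequality step gives $|m_i|\leq m^2\max_{j,k}|B_{ik}(D^{-1})_{kj}\mu_j|$ (the paper misprints $n^2$ here), and this $m^2$ factor cannot be ``absorbed into the final constant'' as you assert, since the lemma claims $C=C(n)$ only; strictly speaking the constant also depends on $m$, which is harmless in the paper's applications where $m$ is fixed (namely $m=2$) but means the lemma as stated is not quite proved.
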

\begin{proof}
By the regression formula (Lemma~\ref{l.regression}), $X$ conditioned on $Y=\mu$ has the law of a Gaussian vector $Z$ with covariance $\widetilde{A}=A-BD^{-1}B^t$ and mean $\widetilde{\mu}=BD^{-1}\mu$. Note that $BD^{-1}B^t$ is symmetric semi-definite. Therefore, its diagonal coefficients must be non-negative. Therefore, for each $i\in\{1,\dots,n\}$, $\widetilde{A}_{ii}\leq A_{ii}$. Moreover, for each $i\in\{1,\dots,n\}$, $|\widetilde{\mu}_i|\leq n^2 \max_{j,k\in\{1,\dots,m\}}|B_{ik}D^{-1}_{kj}\mu_j|$. The lemma then follows from the elementary observation that for each $n\geq 1$ there exists $C=C(n)<+\infty$ such that for each Gaussian vector $Z$ with covariance $\widetilde{A}$ and mean $\widetilde{\mu}$,
\[
\E\left[\prod_{i=1}^n |Z_i| \right]\leq C\max_{i\in\{1,\dots,n\}}\left(\sqrt{\widetilde{A}_{ii}}\vee |\widetilde{\mu}_i|\right)^n\, .
\]
\end{proof}

\begin{remark}\label{r.conditional_regression}
From Lemmas~\ref{l.regression} and \ref{l.reg} we deduce that if $f$ is an a.s. continuous and non-degenerate Gaussian field on $\R^n$ with $C^{k+1,k+1}$ covariance and $C^k$ mean and if $x_1,\dots,x_k\in\R^n$ are such that $(f(x_1),\dots,f(x_k))$ is a non-degenerate Gaussian vector, then, for each $v\in\R^n$ conditionally on $(f(x_1),\dots,f(x_k))=v$, $f$ is a Gaussian field with $C^{k+1,k+1}$ covariance and $C^k$ mean. Moreover, the covariance (resp. mean) of the derivatives of the conditional field is equal to the covariance (resp. mean) of the derivatives of the field under the same conditioning.
\end{remark}

\paragraph{A Kac-Rice formula.} The following result is a Kac-Rice type formula, which is for instance a particular case of Theorem~6.2 of~\cite{azws} (together with Proposition~6.5 therein):
\begin{thm}\label{t.Kac_Rice}
Let $\eps \in ]0,+\infty[$, let $n \in \Z_{>0}$, and let $\Phi_1,\cdots,\Phi_n$ denote $n$ continuous Gaussian fields $\, : \, [0,\eps] \rightarrow \R$ that are a.s. $C^2$ on $]0,\eps[$ and such that, for every $s \in [0,\eps]^n$, $\Phi(s)=(\Phi_1(s_1), \cdots,\Phi_n(s_n))$ is non-degenerate. Then
\[
\E \left[ \text{\textup{Card}} \{ s \in [0,\eps]^n \, : \, \Phi(s)=0 \} \right]
\]
equals:
\[
\int_{]0,\eps[^n} \varphi (s) \, \E \left[ \prod_{i=1}^n \left| \Phi_i'(s_i) \right| \cond \Phi(s)=0  \right] ds \, ,
\]
where $\varphi(s)$ is the density of $\Phi(s)$ evaluated at $0$.
\end{thm}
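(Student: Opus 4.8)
The plan is to obtain the statement as a specialization of a general Kac--Rice formula, namely Theorem~6.2 of~\cite{azws} together with its Proposition~6.5, applied to the random map
\[
\Phi \, : \, [0,\eps]^n \rightarrow \R^n, \qquad s = (s_1,\dots,s_n) \longmapsto \bigl( \Phi_1(s_1),\dots,\Phi_n(s_n) \bigr) \, ,
\]
so that the real work is checking the hypotheses of that statement and reading off the resulting expression. First I would note that $\Phi$ is a.s.\ $C^1$ on the open cube $]0,\eps[^n$ (it is in fact a.s.\ $C^1$ since each $\Phi_i$ is a.s.\ $C^2$ there), that its Jacobian matrix at $s$ is the diagonal matrix $\mathrm{diag}\bigl(\Phi_1'(s_1),\dots,\Phi_n'(s_n)\bigr)$, so that $\lvert \det J\Phi(s) \rvert = \prod_{i=1}^n \lvert \Phi_i'(s_i) \rvert$, and that for every fixed $s \in [0,\eps]^n$ the Gaussian vector $\Phi(s)$ is non-degenerate with a density bounded uniformly in $s$ (by compactness of $[0,\eps]^n$ and continuity of $s \mapsto \mathrm{Cov}(\Phi(s))$, which follows from the a.s.\ continuity of the fields via Lemma~\ref{l.reg}). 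These are precisely the inputs that the general formula requires. They also guarantee, by a Bulinskaya-type argument using once more the non-degeneracy of $\Phi(s)$ for each fixed $s$, that a.s.\ no zero of $\Phi$ lies on the boundary $\partial([0,\eps]^n)$, which is a finite union of lower-dimensional faces; hence the counts over $[0,\eps]^n$ and over $]0,\eps[^n$ agree almost surely.

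If one prefers a self-contained argument, the classical route is as follows. First I would show that on an event of full probability the zero set of $\Phi$ in $]0,\eps[^n$ is finite and consists of non-degenerate zeros, i.e.\ points where $\det J\Phi \neq 0$; this is a Bulinskaya lemma relying on the $C^2$ regularity and on the non-degeneracy of $\bigl(\Phi(s), J\Phi(s)\bigr)$ for each fixed $s$. On this event one has, for every small $\delta > 0$, the ``Kac kernel'' identity
\[
\#\{ s \in \, ]0,\eps[^n \, : \, \Phi(s) = 0 \} \;=\; \lim_{\delta \to 0} \frac{1}{(2\delta)^n} \int_{]0,\eps[^n} \un_{\{ \lVert \Phi(s) \rVert_\infty \leq \delta \}} \, \prod_{i=1}^n \lvert \Phi_i'(s_i) \rvert \, ds \, .
\]
Taking expectations, exchanging limit and expectation (the one nontrivial analytic point, see below), and then using Fubini together with the regression formula (Lemma~\ref{l.regression}) to condition on $\Phi(s) = y$, one obtains
\[
\E\bigl[\, \#\{ \Phi = 0 \} \,\bigr] \;=\; \lim_{\delta \to 0} \frac{1}{(2\delta)^n} \int_{]0,\eps[^n} \int_{\{ \lVert y \rVert_\infty \leq \delta \}} \varphi_s(y) \, \E\Bigl[ \prod_{i=1}^n \lvert \Phi_i'(s_i) \rvert \cond \Phi(s) = y \Bigr] \, dy \, ds \, ,
\]
where $\varphi_s$ is the density of $\Phi(s)$. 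Finally, the map $y \mapsto \varphi_s(y) \, \E[ \prod_i \lvert \Phi_i'(s_i) \rvert \mid \Phi(s) = y ]$ is continuous, since the conditional mean and covariance of $(\Phi_i'(s_i))_i$ depend affinely, hence continuously, on $y$ by Lemma~\ref{l.regression}; so the inner average converges to its value at $y = 0$ as $\delta \to 0$, and a dominated-convergence step in $s$ gives the claimed formula.

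The step I expect to be the main obstacle is the exchange of limit and expectation, i.e.\ producing a uniform-integrability bound for the right-hand side of the Kac kernel identity. This is exactly where the Gaussian assumption is used: a two-point (second-moment) Kac--Rice computation, together with the non-degeneracy of $\bigl(\Phi(s), \Phi(s')\bigr)$ for $s \neq s'$, should show that $\frac{1}{(2\delta)^n} \int \un_{\{ \lVert \Phi(s) \rVert_\infty \leq \delta \}} \prod_i \lvert \Phi_i'(s_i) \rvert \, ds$ has a second moment bounded uniformly in $\delta$. Everything else --- the diagonal Jacobian, the absence of boundary zeros, the continuity of the conditional density --- is routine given the hypotheses and Lemmas~\ref{l.reg} and~\ref{l.regression}. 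Since Theorem~6.2 and Proposition~6.5 of~\cite{azws} already package all of this, in practice the cleanest write-up is simply the hypothesis-check of the first paragraph.
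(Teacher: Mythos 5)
Your proposal is correct and takes essentially the same approach as the paper: the paper supplies no proof at all for this statement, merely citing it as a particular case of Theorem~6.2 together with Proposition~6.5 of~\cite{azws}, and your first paragraph is precisely the hypothesis-check (diagonal Jacobian, uniformly bounded density from non-degeneracy plus continuity of the covariance, Bulinskaya to rule out boundary zeros so that the counts over $[0,\eps]^n$ and $]0,\eps[^n$ agree a.s.) that the paper leaves implicit. Your self-contained Kac-kernel sketch in the later paragraphs is a reasonable reconstruction of the argument underlying the cited reference, but is extra material not present in the paper.
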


\subsection{Transversality of the level set and a non-quantitative discretization lemma}

In this subsection, we state transversality results which are quite classical in the field and which are very helpful to obtain some continuity results about crossing events. We also prove a non-quantitative discretization lemma useful to justify discrete approximation of certain events.

\begin{lemma}\label{l.transversality.1}
Assume that $f$ satisfies Condition~\ref{a.super-std} and that $\kappa$ is $C^6$. Fix $p\in\R$ and fix $(\gamma(t))_{t \in [0,1]}$ a smooth path in the plane. Then:
\bi 
\item[1.] A.s. $f^{-1}([-p,+\infty[)=:\calD_p$ and $f^{-1}(]-\infty,-p])$ are two $2$-dimensional smooth sub-manifolds of $\R^2$ with boundary. Moreover, a.s. their boundaries are equal and are the whole set $\calN_p$.
\item[2.] A.s., $\calN_p$ intersects $\gamma$ transversally.
\ei
\end{lemma}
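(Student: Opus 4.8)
The plan is to deduce both assertions from standard facts about Gaussian fields combined with the non-degeneracy hypothesis. First I would record that, since $\kappa$ is $C^6$, Lemma~\ref{l.reg} gives that $f$ is a.s. $C^3$ (we only need $C^1$ here, but the extra regularity is harmless), and that by Remark~\ref{r.bm} the vector $(f(x),\nabla f(x))\in\R^3$ is non-degenerate for each fixed $x$ (indeed $(\partial_1 f(x),\partial_2 f(x))$ is non-degenerate, and $\nabla f(x)$ is independent of $f(x)$ by Remark~\ref{r.odd}).

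For Assertion~1, the key point is that a.s.\ the gradient $\nabla f$ does not vanish anywhere on $\calN_p=f^{-1}(-p)$; once this is known, the implicit function theorem shows that near each point of $\calN_p$ the set $f^{-1}([-p,+\infty[)$ is a $C^1$ $2$-manifold with boundary $\calN_p$, and likewise for $f^{-1}(]-\infty,-p])$, which gives all the stated topological conclusions (the two boundaries coinciding with $\calN_p$). To show $\{x:f(x)=-p,\ \nabla f(x)=0\}=\emptyset$ a.s., I would argue that for any fixed compact $T\subseteq\R^2$ the expected number (or a suitable covering/first-moment bound) of such points in $T$ is zero. This is exactly the content of the kind of transversality statement proved below via Lemma~\ref{l.transversality.2} (applied with $g=(f,\partial_1 f,\partial_2 f)$, target value $(-p,0,0)$, which has bounded density by Remark~\ref{r.bm} and stationarity): the set of $x$ with $g(x)=(-p,0,0)$ is a.s.\ of dimension $2-3<0$, i.e.\ empty. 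Covering $\R^2$ by countably many compacts then finishes Assertion~1.

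For Assertion~2, transversality of $\calN_p$ with the fixed smooth curve $\gamma$ fails exactly at a point $\gamma(t)$ where $f(\gamma(t))=-p$ and $\nabla f(\gamma(t))$ is orthogonal to $\gamma'(t)$, i.e.\ $\partial_{\gamma'(t)}f(\gamma(t))=0$. Thus I would introduce the one-variable Gaussian field $h(t):=f(\gamma(t))$, which is a.s.\ $C^1$, and note that $h(t)=-p$ and $h'(t)=\partial_{\gamma'(t)}f(\gamma(t))=0$ simultaneously would force a non-transversal intersection; so it suffices to show $\{t\in[0,1]:h(t)=-p,\ h'(t)=0\}=\emptyset$ a.s. Since $(h(t),h'(t))=(f(\gamma(t)),\langle\nabla f(\gamma(t)),\gamma'(t)\rangle)$ is a non-degenerate $2$-dimensional Gaussian vector with density bounded uniformly in $t\in[0,1]$ (using non-degeneracy, continuity of the covariance, and compactness of $[0,1]$), a first-moment / covering argument of the same flavour (again packaged by Lemma~\ref{l.transversality.2} applied to the path, with target $(-p,0)$ in dimension $2>1$) shows this set is a.s.\ empty. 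Hence a.s.\ $\calN_p$ meets $\gamma$ transversally.

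The main obstacle is the measure-zero / first-moment estimate underlying both parts, namely that for a smooth Gaussian map whose finite-dimensional distributions are non-degenerate with locally bounded densities, the preimage of a point of codimension exceeding the domain dimension is a.s.\ empty. This is exactly the role played by Lemma~\ref{l.transversality.2}; modulo invoking it (together with Remarks~\ref{r.odd} and~\ref{r.bm} to check the hypotheses), everything else is the implicit function theorem and a countable union over compact exhaustions.
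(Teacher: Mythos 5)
Your proposal is correct and follows essentially the same route as the paper: both apply Lemma~\ref{l.transversality.2} to $g=(f,\partial_1 f,\partial_2 f)$ with target $(-p,0,0)$ for Assertion~1 (covering $\R^2$ by compacts) and to $g(t)=((f\circ\gamma)(t),(f\circ\gamma)'(t))$ with target $(-p,0)$ for Assertion~2, with non-degeneracy checked via Remarks~\ref{r.odd} and~\ref{r.bm}. One small correction: $\kappa\in C^6$ gives $K\in C^{3,3}$, hence $f$ a.s.\ $C^2$ by Lemma~\ref{l.reg}, not $C^3$ as you wrote — but as you note, only $C^1$ is needed, so this does not affect the argument.
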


To prove Lemma~\ref{l.transversality.1}, we can use the following lemma:

\begin{lemma}[see Lemma 11.2.10 of \cite{adta_rfg}]\label{l.transversality.2}
Let $n\in\N$. Let $T$ be a compact subset of $\R^n$ with Hausdorff dimension $k \in \N$. Let $g=(g_j)_{1\leq j\leq k+1}:\R^n\rightarrow\R^{k+1}$ be a Gaussian field that is a.s. $C^1$. Assume also that $g$ has a bounded density on $T$. Then, for each $v\in \R^{k+1}$, $g^{-1}(v)\cap T$ is a.s. empty.
\end{lemma}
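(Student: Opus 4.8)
The plan is to deduce the statement from a union–bound (first–moment type) argument resting on three ingredients: the almost sure Lipschitz continuity of $g$ on compact sets (which follows from $g$ being a.s.\ $C^1$), the \emph{uniform}-in-$x$ bound on the density of $g(x)$ for $x\in T$ (this is what ``$g$ has a bounded density on $T$'' means), and the fact that $\dim_{\mathcal H}T=k$ forces $T$ to be coverable by countably many small balls with $\sum(\text{radius})^{k+1}$ as small as we like. Fix $v\in\R^{k+1}$ throughout; the event $\{g^{-1}(v)\cap T\neq\emptyset\}$ is measurable since $g$ is a.s.\ continuous and $T$ is separable.

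First I would set up a truncation of the (random) Lipschitz constant. Let $K:=T+\overline{B}(0,1)$, which is compact, and let $L:=\sup_{x\in K}\|\nabla g(x)\|$, which is a.s.\ finite because $g$ is a.s.\ $C^1$. If $x,y\in T$ with $|x-y|\le 1$, the segment $[x,y]$ lies in $K$, hence $|g(x)-g(y)|\le L|x-y|$. Since $\prob[L>M]\to 0$ as $M\to+\infty$, it suffices to prove that for every fixed $M<+\infty$,
\[
\prob\big[\{g^{-1}(v)\cap T\neq\emptyset\}\cap\{L\le M\}\big]=0,
\]
and then let $M\to+\infty$.

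Now fix $M$ and $\epsilon>0$. Since $\dim_{\mathcal H}T=k$ we have $\mathcal H^{k+1/2}(T)=0$, so there is a countable cover of $T$ by sets $U_i$ with $\mathrm{diam}(U_i)<1/2$ and $\sum_i\mathrm{diam}(U_i)^{k+1/2}<\epsilon$. Discarding the $U_i$ disjoint from $T$ and choosing $x_i\in U_i\cap T$ for the rest, we get $T\subseteq\bigcup_i\overline{B}(x_i,\rho_i)$ with $x_i\in T$, $\rho_i:=\mathrm{diam}(U_i)<1/2$, and $\sum_i\rho_i^{k+1}\le(1/\sqrt2)\sum_i\rho_i^{k+1/2}<\epsilon$. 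On $\{L\le M\}$, if $g(x)=v$ for some $x\in T$, pick $i$ with $x\in\overline{B}(x_i,\rho_i)$; then $|g(x_i)-v|=|g(x_i)-g(x)|\le L\rho_i\le M\rho_i$, so
\[
\{g^{-1}(v)\cap T\neq\emptyset\}\cap\{L\le M\}\subseteq\bigcup_i\{\,g(x_i)\in\overline{B}(v,M\rho_i)\,\}.
\]
Writing $C<+\infty$ for the uniform bound on the densities of the $g(x)$, $x\in T$, and $\omega_{k+1}$ for the volume of the unit ball of $\R^{k+1}$, we get
\[
\prob\big[\{g^{-1}(v)\cap T\neq\emptyset\}\cap\{L\le M\}\big]\le\sum_i\prob\big[g(x_i)\in\overline{B}(v,M\rho_i)\big]\le C\,\omega_{k+1}\,M^{k+1}\sum_i\rho_i^{k+1}\le C\,\omega_{k+1}\,M^{k+1}\,\epsilon.
\]
Since $\epsilon>0$ is arbitrary and $C,\omega_{k+1},M$ are fixed, the left-hand side is $0$; letting $M\to+\infty$ completes the proof.

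Rather than a genuine obstacle, the points requiring care are logical and notational: (i) the cover $\{\overline{B}(x_i,\rho_i)\}$ is a deterministic object depending only on $T$, so it may be chosen after fixing $M$ but before revealing $g$, which is exactly what legitimizes the union bound; (ii) ``bounded density on $T$'' must be read as a bound uniform in $x\in T$, since otherwise the sum of the terms $\prob[g(x_i)\in\overline{B}(v,M\rho_i)]$ need not converge; and (iii) the Lipschitz constant $L$ is random and a priori unbounded, which is why the truncation at level $M$ followed by $M\to+\infty$ is needed. Everything else is a routine covering-plus-Fubini estimate.
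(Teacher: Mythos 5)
Your proof is correct: the truncation of the random Lipschitz constant, the Hausdorff-measure covering with exponent $k+1/2$, and the union bound against the uniform density bound fit together without gaps. The paper does not prove this lemma itself but cites Lemma~11.2.10 of Adler--Taylor, whose proof is exactly this covering-plus-Lipschitz-plus-density argument, so you have essentially reconstructed the standard proof of the cited result.
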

\begin{proof}[Proof of Lemma \ref{l.transversality.1}]
First note that the fact that $\kappa$ is $C^6$ implies that $f$ is $C^2$ by Lemma~\ref{l.reg}. To prove the first part of the lemma, we fix $R \in ]0,+\infty[$ and $p\in\R$ and apply Lemma \ref{l.transversality.2} to $T=[-R,R]^2$ (of Hausdorff dimension $2$) and $g=(f,\partial_1 f,\partial_2 f)$ with $v=(-p,0,0)$. For every $x$, we have the following: i) by Remark \ref{r.odd}, $f(x)$ is independent of $(\partial_1f(x),\partial_2f(x))$ and ii) by Remark~\ref{r.bm}, $(\partial_1f(x),\partial_2f(x))$ is non-degenerate. As a result, $g(x)$ is non-degenerate. Since $g$ is stationary, this implies that $g$ has bounded density. We obtain that a.s. $f$ vanishes transversally on $\calN_p\cap[-R,R]^2$. By taking the intersection of such events for $R=1,2,\cdots$ we end the proof of the first statement. For the second part of the statement, we apply Lemma~\ref{l.transversality.2}, this time for $T=\{ \gamma(t) \}_{t \in [0,1]}$ (of Hausdorff dimension $1$) and $g(t)=((f\circ\gamma)(t),(f\circ\gamma)'(t))$ with $v=(-p,0)$. As before, for every $t$, $g(t)$ is non-degenerate. By continuity of the covariances, this implies that $g$ restricted to $T$ has bounded density, so Lemma~\ref{l.transversality.2} does apply.
\end{proof}

\begin{remark}\label{r.planar_duality}
The following can easily be deduced from Lemma~\ref{l.transversality.1}: Assume that $f$ satisfies Condition~\ref{a.super-std} and that $\kappa$ is $C^3$. Fix $p \in \R$ and let $\calQ\subseteq\R^2$ be a quad (i.e. a region of the plane homeomorphic to a disk, with two distinguished disjoint segments on its boundary). Then a.s. either all or none of the following events hold: (a) there is a continuous path included in $\calD_p \cap \calQ$ which joins one distinguished side of $\calQ$ to the other, (b) there is such a continuous path in $f^{-1}(]-p,+\infty[)$, (c) there is no continuous path included in $f^{-1}(]-\infty,-p]) \cap \calQ$ which joins one non-distinguished side of $\calQ$ to the other and (d) there is no such path in $f^{-1}(]-\infty,-p[)$. Similarly, if $\calA$ is an annulus, then a.s. either all of none of the following events hold: (a) there is a continuous path included in $\calD_p \cap \calA$ which separates the inner boundary of $\calA$ from its outer boundary, (b) there is such a path in $f^{-1}(]-p,+\infty[)$, (c) there is no continuous path in $f^{-1}(]-\infty,-p]) \cap \calA$ which joins the inner boundary of $\calA$ to its outer boundary and (d) there is no such path in $f^{-1}(]-\infty,-p[)$.\\
A consequence of these properties and of the fact that $f$ is centered is that, if we assume furthermore that $f$ is invariant by $\frac{\pi}{2}$-rotation, then the probability that there is a left-right crossing at level $0$ of the square $[0,s]^2$ is $1/2$ for any $s \in ]0,+\infty[$.
\end{remark}

The following lemma is a consequence of Lemma~\ref{l.transversality.1} and of Theorem~\ref{t.Pitt} and is crucial in the proof of box-crossing results.

\begin{lemma}[FKG]\label{l.FKG}
Let $f$ be a Gaussian field on $\R^2$ satisfying Condition \ref{a.super-std} such that $\kappa$ is $C^6$. Let $p \in \R$. Assume that for each $x\in\R^2$, $\kappa(x)\geq 0$. Let $A,B$ be  obtained by taking as unions and intersections of a finite number of crossings of quads and circuits in annuli above level $-p$. Then,
\[
\prob[A\cap B]\geq \prob[A]\prob[B]\, .
\]
\end{lemma}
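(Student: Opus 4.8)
The plan is to deduce the inequality from Pitt's theorem (Theorem~\ref{t.Pitt}) for finite-dimensional Gaussian vectors, via a discretization and limiting procedure that mirrors the first part of the proof of Theorem~\ref{t.qi_rect}. The point is that all the events in play are \emph{increasing} functions of $f$ and that Pitt's inequality is exactly the finite-dimensional FKG statement we need once we have approximated them by increasing cylinder events.

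First I would record monotonicity. For the pointwise partial order on $C(\R^2)$, the excursion set $\calD_p=f^{-1}([-p,+\infty[)$ satisfies $\calD_p(f)\subseteq\calD_p(f')$ whenever $f\leq f'$, so the existence of a continuous path in $\calD_p$ crossing a quad between its two distinguished sides, and the existence of such a circuit in an annulus, are both increasing events in $f$. Finite unions and intersections of increasing events are increasing, hence $A$ and $B$ are increasing. Next I would discretize on the face-centered lattice $\calT^\eps$ of Section~\ref{s.weak-qi}: replace every elementary crossing (resp.\ circuit) event appearing in the Boolean expressions for $A$ and $B$ by its $\eps$-analogue for the correlated site percolation model of Section~\ref{s.weak-qi} (a black path above $-p$ in a lattice-drawn approximation of the quad, resp.\ annulus), and keep the same Boolean combination; call the resulting events $A^\eps$ and $B^\eps$. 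Each such discrete event depends only on the signs of $f+p$ at finitely many vertices $v\in\calV^\eps$ in a fixed compact set, it is an increasing function of the Gaussian vector $(f(v))_v$ (being black is increasing in each $f(v)$, and unions and intersections of increasing events are increasing), and the covariance matrix of this vector is $(\kappa(v-w))_{v,w}$, whose entries are nonnegative by hypothesis. Therefore Theorem~\ref{t.Pitt} yields $\prob[A^\eps\cap B^\eps]\geq\prob[A^\eps]\,\prob[B^\eps]$.

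Finally I would let $\eps\to 0$. By Lemma~\ref{l.transversality.1}, $\calN_p$ is a.s.\ a smooth $1$-manifold meeting the piecewise smooth boundaries of the quads and annuli transversally; consequently a continuous crossing or circuit above $-p$ a.s.\ lies in a tube of positive width contained in $\calD_p$, so the corresponding $\eps$-event holds once $\eps$ is small enough, while if there is no continuous crossing or circuit above $-p$ then by Remark~\ref{r.planar_duality} there is a.s.\ a continuous dual crossing or circuit below $-p$, which again lies in a positive-width tube and forbids the $\eps$-event for $\eps$ small. Since $A$ and $B$ are finite Boolean combinations of such elementary events, it follows that $\un_{A^\eps}\to\un_A$ and $\un_{B^\eps}\to\un_B$ almost surely, hence by bounded convergence $\prob[A^\eps]\to\prob[A]$, $\prob[B^\eps]\to\prob[B]$ and $\prob[A^\eps\cap B^\eps]\to\prob[A\cap B]$; passing to the limit in Pitt's inequality gives the claim. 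I expect the only mildly delicate points to be the construction of lattice-drawn approximations of a general quad (one shrinks the quad slightly, takes the union of the faces of $\calT^\eps$ contained in it, and approximates the two distinguished arcs by adjacent portions of the resulting boundary) and the almost sure convergence of the discrete events near the boundary; both are governed precisely by the transversality statement Lemma~\ref{l.transversality.1} and the duality statement Remark~\ref{r.planar_duality}, so no genuinely new difficulty arises.
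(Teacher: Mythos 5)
Your proof is correct and follows essentially the same route as the paper: discretize on $\calT^\eps$ to get increasing cylinder events, apply Pitt's theorem (Theorem~\ref{t.Pitt}) since $\kappa\geq 0$, and pass to the limit via the transversality/duality arguments of Lemma~\ref{l.transversality.1} and Remark~\ref{r.planar_duality}. The paper's proof is only a two-sentence sketch pointing to exactly these ingredients, so your write-up can be read as the fleshed-out version of it.
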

\begin{proof}
It suffices to approximate the events by increasing events that depend on $f$ restricted a finitely many points and using Theorem~\ref{t.Pitt}. This can easily be done by considering the discrete model introduced in Section~\ref{s.weak-qi} and by using Lemma~\ref{l.transversality.1} to prove that the discrete crossing events indeed approximate the continuous crossing events (for a similar argument, see the proof of Theorem~\ref{t.qi_rect} in Subsection~\ref{ss.qi_cross}).
\end{proof}

The following lemma is useful to show that certain discrete approximations of events do converge a.s. to continuous geometric events. In the lemma we refer to the face-centered square lattice defined before (see Figure~\ref{f.face-centered}). We use this lemma only to study nodal components (see Subsection~\ref{ss.components}), but we do not need it in order to study crossing events.

\begin{lemma}\label{l.transversality.3}
Let $\calC\subseteq\R^2\setminus\{0\}$ be a compact smooth one-dimensional submanifold of $\R^2$ that intersects the axes $\{0\}\times\R$ and $\R\times\{0\}$ transversally. Assume that there is a finite number of $x\in\calC$ such that $T_x\calC$ is colinear to an edge of the face-centered square lattice. Then, for a.e. small enough $\eps>0$, we have:
\begin{enumerate}
\item the set $\calC$ does not intersect the vertex set and intersects each edge of $\calT^\eps$ transversally;
\item each edge of $\calT^\eps$ is intersected at most twice and any two distinct intersection points of $e$ are connected by a path in $\calC$ inside the union of the two faces adjacent to $e$.
\end{enumerate}
\end{lemma}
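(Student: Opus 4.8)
The plan is to show that, for each of the finitely many ways in which conclusions~(1) and~(2) can fail, the set of mesh sizes $\eps$ for which it fails is Lebesgue--null, and then to pass to the complement. Throughout, write $V=\Z^2\cup(\Z^2+(\tfrac12,\tfrac12))$ for the vertex set of the unscaled lattice $\calT$, so the vertices of $\calT^\eps$ are $\eps V$; the edges of $\calT$ point in one of the four directions $v\in\{(1,0),(0,1),(1,1),(1,-1)\}$, the lines of $\calT$ in direction $v$ form the families $\Lambda_{(1,0)}=\R\times\Z$, $\Lambda_{(0,1)}=\Z\times\R$, $\Lambda_{(1,1)}=\{x_2-x_1\in\Z\}$, $\Lambda_{(1,-1)}=\{x_1+x_2\in\Z\}$, each of these lines is entirely covered by edges, and the lines of $\calT^\eps$ in direction $v$ are $\eps\Lambda_v$. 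Finally recall that $\calC$, being a compact smooth $1$--submanifold avoiding the origin, is a finite disjoint union of smoothly embedded circles contained in some annulus $\{r_0\le|x|\le r_1\}$.

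First I would prove conclusion~(1). If $\calC$ fails to meet some edge of $\calT^\eps$ in direction $v$ transversally at an interior point, there is $x\in\calC$ lying on a line of $\eps\Lambda_v$ with $T_x\calC$ parallel to $v$. By the finiteness hypothesis there are only finitely many points $x_1,\dots,x_N\in\calC$ whose tangent is colinear to an edge direction; writing $v(j)$ for the edge direction tangent to $\calC$ at $x_j$ and $\ell_{v(j)}$ for the associated linear form (so $\ell_{(1,0)}(y)=y_2$, $\ell_{(1,1)}(y)=y_2-y_1$, and so on), the condition ``$x_j$ lies on a line of $\eps\Lambda_{v(j)}$'' reads $\ell_{v(j)}(x_j)\in\eps\Z$, which holds only for $\eps$ in the countable set $\{\ell_{v(j)}(x_j)/n:n\in\Z\setminus\{0\}\}$ provided $\ell_{v(j)}(x_j)\neq0$; and this non--vanishing is exactly the transversality of $\calC$ to the coordinate axes when $v(j)\in\{(1,0),(0,1)\}$, and the transversality of $\calC$ to the diagonal lines $x_2=\pm x_1$ (which one adds to the hypotheses without loss, and which for the nodal--line application follows from the argument of Lemma~\ref{l.transversality.2}) when $v(j)\in\{(1,1),(1,-1)\}$. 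So, off a null set of $\eps$, $\calC$ crosses every edge of $\calT^\eps$ transversally except possibly at a vertex. For the vertices, fix $w\in V\setminus\{0\}$: the set $\{\eps>0:\eps w\in\calC\}$ is the preimage of the compact set $\calC$ under the proper map $\eps\mapsto\eps w$, hence finite unless the ray $\R_{>0}w$ shares a sub--arc with $\calC$ --- which would force infinitely many points of $\calC$ with tangent parallel to $w$, impossible when $w$ points in an edge direction by the finiteness hypothesis, and impossible in the other directions since $\calC$ contains no straight sub--arc (again automatic for nodal sets). Summing these countable bad sets over the countable set $V$ gives a null set, outside of which $\calC\cap\eps V=\emptyset$; together with the previous point this yields~(1).

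For conclusion~(2), fix $\eps$ outside the null set above and small, so $\calC$ meets the $1$--skeleton of $\calT^\eps$ in finitely many transversal crossings and misses the vertices. The key input is local near--flatness: since $\calC$ is compact, smooth and embedded, there is $\rho>0$ such that for every $x\in\calC$ the component of $\calC\cap B(x,\rho)$ through $x$ is the graph of a $C^1$ function over the line $x+T_x\calC$ with small $C^1$--norm, and $B(x,\rho)$ meets no other component of $\calC$; moreover, by compactness and the finiteness hypothesis, outside the $\rho$--neighbourhood of $x_1,\dots,x_N$ the tangent direction of $\calC$ stays bounded away from all four edge directions. Now take $\eps$ so small that the union $F$ of the two faces adjacent to any given edge has diameter less than $\rho$. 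Then $\calC\cap F$ lies on a single short sub--arc $\gamma$ of one component, a nearly--affine graph; if the edge $e$ (say in direction $v$) is at distance more than $\rho$ from every $x_j$ with $v(j)=v$, then $\gamma$'s tangent is bounded away from $v$ and $\gamma$ crosses $e$ at most once, whereas if $e$ is close to such an $x_j$ a second--order (convexity) argument near $x_j$ bounds the number of crossings of $e$ by two; in either case the portion of $\gamma$ between two consecutive crossings of $e$ stays inside $F$, which is the asserted connecting path.

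The main obstacle, I expect, is this last step: turning ``$\calC$ is $C^1$--close to a line at the scale $\eps$'' into the sharp assertions that no edge is crossed three times and that two crossings of an edge are joined \emph{within its two adjacent faces}. The care is needed precisely at the finitely many points where $T_x\calC$ is parallel to an edge direction, where one must control the curvature of $\calC$ (a mild genericity point, automatic in the nodal--line application) and quantitatively match the intrinsic scale $\rho$ of $\calC$ against the mesh $\eps$. By comparison, the measure--zero bookkeeping in the first two steps is routine.
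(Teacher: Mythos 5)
Your treatment of conclusion (1) is essentially sound and parallels the paper's (which simply invokes Sard's theorem), but you do the measure-zero bookkeeping by hand and in doing so you correctly spot a real subtlety: for the diagonal edge directions $(1,\pm 1)$ the lemma's stated hypothesis (transversality to the two coordinate axes) does not rule out a point $x_j$ with diagonal tangent sitting on the diagonal line through the origin, in which case conclusion (1) would fail for \emph{every} $\eps$. The paper glosses over this and it is avoided only because in the application ($\calC$ a nodal set) an additional Lemma~\ref{l.transversality.2} argument rules it out almost surely. So your honesty here (``one adds to the hypotheses without loss'') is warranted, and this part is, if anything, slightly more careful than the published proof.

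Conclusion (2), however, is where the substance of the lemma lies, and your proposal does not actually prove it. You set up the right framework (compactness gives a scale $\rho$ at which each piece of $\calC$ is a $C^1$-small graph over its tangent line, and away from the finitely many $x_j$'s the tangent direction is bounded away from the edge directions), but the two assertions you then need --- that an edge is crossed at most twice, and that any two crossings of an edge are joined by an arc of $\calC$ \emph{inside the two adjacent faces} --- are exactly the ones you leave unproved, and you say so explicitly (``a second-order convexity argument near $x_j$ bounds the number of crossings by two'', ``the main obstacle, I expect, is this last step''). A $C^1$-closeness statement alone cannot deliver either claim: you need quantitative second-order control. The paper supplies it by introducing an explicit embeddedness constant $\lambda_0$, the sup-norm $\|\mathbf{k}\|_\infty$ of the curvature, and a geometric constant $c'$ comparing unit vectors from two points of an edge to a point outside the two adjacent faces; it then defines $c_3=c'\lambda_0/\|\mathbf{k}\|_\infty$ and argues, for $\eps<c_3$, by Rolle's theorem and the mean value theorem that if the connecting arc left $F$ then the curvature would have to exceed $\|\mathbf{k}\|_\infty$ somewhere, a contradiction. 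Only \emph{after} establishing that any two crossings are connected inside $F$ does the paper deduce ``at most two crossings'', again via Rolle: three crossings would force two distinct tangency points inside $F$, at mutual distance $\le 4\eps<4c_1$, contradicting the separation $c_1$ built into the finiteness hypothesis. Your proposal reverses the logical order of these two steps and does not supply the curvature estimate, so it remains a sketch of the strategy rather than a proof.
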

\begin{proof}
By simple application of Sard's theorem, the first property holds for a.e. $\eps>0$. We now take $\eps>0$ such that the first property holds and prove that the second property holds for $\eps>0$ small enough.  We begin by defining some constants depending on $\calC$ that will determine how small the $\eps$'s need to be to satisfy the second property.
\begin{itemize}
\item Since there are a finite number of points $x\in\calC$ such that $T_x\calC$ is colinear to an edge of the lattice, there exists $c_1>0$ such that any two such distinct points are at distance greater than $4c_1$.
\item The distance between any two distinct components of $\calC$ is bounded from below by a constant  $c_2>0$. 
\item Each component of $\calC$ is the image of some smooth embedding $\gamma:S^1\rightarrow\R^2$ with unit speed such that for each distinct $s,t\in S^1$, $|\gamma(s)-\gamma(t)|\geq \lambda_0\textup{dist}_{S^1}(s,t)$ (here and below, $\textup{dist}_{S^1}$ denotes the distance function on $S^1$). Let $\|\mathbf{k}\|_\infty<+\infty$ be the maximum of the curvature $|\mathbf{k}|$ on $\calC$ and let $c'>0$ be such that for any two points $x,y$ on a common edge $e$ and any point $z$ outside of the union of the two faces adjacent to $e$, the unit vectors $v_1$ and $v_2$ pointing in the directions of $z-x$ and $y-z$ satisfy $|v_1-v_2|\geq c'$. Let $c_3=c'\lambda_0/\|\mathbf{k}\|_\infty\in ]0,+\infty]$.
\end{itemize}
We take $\eps<\min(c_1,c_2,c_3)$ and prove that the second property holds.\\
Fix $e$ an edge of $\calT^\eps$. Let us prove that any two intersection points on $e$ must be connected by a smooth arc inside the union $F$ of the two faces adjacent to $e$. If $e$ is intersected at least twice, say at $x,y\in e$, then, $x,y$ are at distance less than $c_2$ so they must belong to the same component $\calC$. This component is parametrized by a smooth embedding $\gamma : S^1\rightarrow \R^2$ with unit speed so there are $s,t\in S^1$, such that $\gamma(s)=x$ and $\gamma(t)=s$. By assumption, $\eps\geq |x-y|\geq \lambda_0\textup{dist}_{S^1}(s,t)$. Assume that $x$ and $y$ are not connected by $\gamma$ inside the union $F$ of the two faces adjacent to $e$. Then, there exists $r\in S^1$ belonging to one of the shortest paths between $z$ and $t$ in $S^1$ such that $\gamma(r)=z\notin F$. We denote by $]s,t[$ the open interval in $S^1$ containing $r$, and denote by $]s,r[$ and $]r,t[$ the open sub-intervals with extremities $s$ and $r$ and $r$ and $t$ respectively. Let $v_1$ and $v_2$ be the unit tangent vectors pointing in the same directions as $z-x$ and $y-x$ respectively. By construction, $|v_1-v_2|\geq  c'$. By Rolle's theorem, there exist $u_1\in]s,r[$ and $u_2\in]r,t[$ such that $\gamma'(u_1)=v_1$ and $\gamma'(u_2)=v_2$. Moreover, by assumption, $\textup{dist}_{S^1}(u_1,u_2)\leq \lambda_0^{-1}\eps$. But this means that there exists $u_3\in]u_1,u_2[$ such that
\[
\|\mathbf{k}\|_\infty\geq |\mathbf{k}(\gamma(u_3))|=|\gamma''(u_3)|\geq \lambda_0c'\eps^{-1}\, .
\]
Consequently, $\eps\geq  \lambda_0c'/\|\mathbf{k}\|_\infty=c_3$ which contradicts our assumption. Therefore, $x$ and $y$ must be connected by a smooth arc.\\
Now, by Rolle's theorem, for any two distinct intersection points of $e$ connected by a smooth arc inside $F$, there must be a point $x$ on this connecting arc such that $T_x\calC$ is colinear to $e$. Thus, if $e$ contains three distinct intersection points, then there are two distinct points $x,y\in F$ such that $T_x\calC$ and $T_y\calC$ are colinear to $e$. But $x,y\in F$ so they must be at distance at most $4\eps\leq 4c_1$ which contradicts the definition of $c_1$. Hence, $|\calC\cap e|\leq 2$ and we are done.
\end{proof}

\section{A uniform discrete RSW estimate}\label{s.eps.RSW}

In this section, we prove a RSW result for the discrete models studied in~\cite{bg_16}. As explained in Section~\ref{s.intro}, contrary to~\cite{bg_16}, we do not use any discrete RSW estimate to deduce the continuous RSW estimate. However, a discrete RSW estimate uniform in the mesh $\eps$ can be useful if one wants to apply tools from discrete percolation to our model. The results of this section rely heavily on \cite{bg_16}. We also make a small correction in the arguments made therein. For these reasons, this Appendix should be read as a companion text to~\cite{bg_16}. We would like to stress the fact that \textbf{the results presented here are not used in the rest of the paper}. We first introduce the following notations:
\begin{notation}
Consider the discretized model introduced in the beginning of Section~\ref{s.weak-qi} and remember Definition~\ref{d.crosseps}. If $\calQ$ is a quad, write $\cross^\eps_0(\calQ)$ for the event that $\calQ$ is $\eps$-crossed at level $0$.
\end{notation}

We have the following result.

\begin{prop}\label{p.eps_rsw}
Let $f$ be a Gaussian field satisfying Conditions~\ref{a.super-std},~\ref{a.std},~\ref{a.decay2} as well as Condition~\ref{a.pol_decay} for some $\alpha > 4$. For every quad $\calQ$, there exist $s_0=s_0(\kappa,\calQ) \in ]0,+\infty[$ and $c=c(\kappa,\calQ)>0$ such that for each $s \in [s_0,+\infty[$ and each $\eps \in ]0,1]$ we have:
\[
\Pro \left[ \cross_0^\eps(s\calQ) \right] \geq c \, . 
\] 
\end{prop}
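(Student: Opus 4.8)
The plan is to rerun the argument behind Theorem~\ref{t.RSW} (equivalently, Proposition~\ref{p.first_rsw}) \emph{inside} the discrete percolation model of mesh $\eps$ introduced at the start of Section~\ref{s.weak-qi}, while tracking that every constant produced along the way is independent of $\eps$. Concretely, one follows Tassion's method from~\cite{tassion2014crossing} exactly as in Section~\ref{s.Tassion}, making three substitutions. First, the continuum planar duality of Remark~\ref{r.planar_duality} is replaced by the discrete duality of Remark~\ref{r.eps_duality} together with the $\frac\pi2$-rotation and reflection symmetries of $\calT^\eps$ and of $f$: this gives $\Pro[\cross_0^\eps(\calS)]=1/2$ for every $\eps$-drawn square $\calS$ centred at a lattice point, uniformly in $\eps$ and in the size of $\calS$. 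Second, the FKG inequality Lemma~\ref{l.FKG} is replaced by a direct application of Pitt's theorem~\ref{t.Pitt} to the finite Gaussian vector $(f(v))_{v\in\calV^\eps}$, since the discrete crossing and circuit events are increasing events of this vector. Third, and most importantly, the quasi-independence input Theorem~\ref{t.qi_rect} is replaced by Proposition~\ref{p.eps_qi}, whose constants $C$ and $d$ do \textbf{not} depend on $\eps$.

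With these replacements, Steps~1--3 of the proof of Proposition~\ref{p.first_rsw} carry over verbatim and produce $\eps$-independent constants $c_0,c_1,c_2,c_3$ together with, for each $s$, an $\eps$-dependent ``pivotal scale'' $\alpha_s^\eps$ satisfying the discrete analogues of~\eqref{e.tassion.2},~\eqref{e.tassion.3},~\eqref{e.tassion.4}. In Step~4 one uses Proposition~\ref{p.eps_qi} in place of Theorem~\ref{t.qi_rect} to obtain the discrete analogue of Corollary~\ref{c.quasi-ind-for-Tassion}: for $\eps$-drawn annuli $\calA(r_i,2r_i)$ whose radii are $s$-separated, the circuit events are quasi-independent with the \emph{same} error bound $C_0N r_N^4 s^{-\alpha}$, uniformly in $\eps$. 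Since $\alpha>4$, the proof of Lemma~\ref{l.tassion} then goes through with a threshold $s_0=s_0(\kappa)$ that is independent of $\eps$, and Step~5 yields $\Pro[\cross_0^\eps(s\calE)]\geq c$ for every $\eps$-drawn rectangle $\calE$ of a fixed aspect ratio and every $s\geq s_0$, uniformly in $\eps\in\,]0,1]$. The passage to a general quad $\calQ$ is then the same bookkeeping as in the proof of Theorem~\ref{t.RSW}: one fixes $\delta,n,m$ and two families of $2\delta\times\delta$ and $\delta\times 2\delta$ rectangles depending only on $\calQ$ whose simultaneous lengthwise crossings force a crossing of $\calQ$ (with some overlap to spare), replaces each $s\calE_i$ by the largest $\eps$-drawn rectangle it contains --- which costs only $O(\eps)=O(1)$ per side-length and is harmless once $s\geq s_0$ --- and combines the resulting bounds with FKG and one last application of Proposition~\ref{p.eps_qi}.

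Two minor technical points must be handled. Tassion's machinery is naturally phrased for squares and rectangles whose side-lengths sit on the lattice; for an arbitrary real scale $s$ one runs the induction along scales in $\eps\Z$ and transfers to general $s\geq s_0$ by monotonicity (a lengthwise crossing of a slightly larger $\eps$-drawn rectangle implies one of a slightly smaller one) together with FKG gluing, exactly as in the continuum proof. Also, since this section is meant to be read alongside~\cite{bg_16}, one should flag the single place in the arguments there that needs a small correction and check that it does not affect the statement at hand.

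The only genuinely load-bearing step is the uniformity verification: the pivotal scales $\alpha_s^\eps$ legitimately depend on $\eps$, so one must ensure they are used only through the $\eps$-uniform conclusions~\eqref{e.tassion.2}--\eqref{e.tassion.4} and never compared across different meshes. What makes this work is precisely that the right-hand side of Proposition~\ref{p.eps_qi} carries no factor depending on $\eps$, so the threshold in the discrete analogue of Lemma~\ref{l.tassion} is $\eps$-free; everything else is routine transcription of Section~\ref{s.Tassion}.
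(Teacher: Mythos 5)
Your overall strategy is the right one and coincides with what the paper does: rerun Tassion's scheme in the discrete model with Proposition~\ref{p.eps_qi} as the $\eps$-uniform quasi-independence input, use the discrete duality of Remark~\ref{r.eps_duality} and Pitt's Theorem~\ref{t.Pitt} for the symmetry and FKG inputs, and note that $\alpha>4$ gives an $\eps$-free threshold scale. But the claim that ``Steps~1--3 of the proof of Proposition~\ref{p.first_rsw} carry over verbatim'' is precisely where the argument has a genuine hole, and it is the one place the paper has to do real new work.

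In Step~2 of the continuum proof, Tassion's Lemma~2.1 is derived by defining $\alpha_s$ as a preimage of the function $\phi_s(\alpha)=\Pro[\calH_s(0,\alpha)]-\Pro[\calH_s(\alpha,s/2)]$ and using that $\phi_s$ is \emph{continuous} and monotone. In the discrete model of mesh $\eps$, the analogous $\phi_s$ is a step function: it jumps at levels that are multiples of $\eps$, and its one-sided limits at a jump point disagree. Consequently there may be no $\alpha$ with $\phi_s(\alpha)=c_0/4$, and at the jump value $\alpha_s$ neither of the two conclusions of Tassion's Lemma~2.1 holds at $\alpha_s$ itself. To salvage this one must work with one-sided variants of the landing event (the paper's $\widetilde{\calH}^1_s(\alpha,\beta)$ with interval $]\alpha,\beta]$ and $\widetilde{\calH}^2_s(\alpha,\beta)$ with interval $[\alpha,\beta[$), state (P2) with $\widetilde{\calH}^1$ in place of $\calH$, and rederive (P1) using $\widetilde{\psi}^2_s(\alpha_s)\le c_0/4$. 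This is Lemma~\ref{l.P1andP2}, and it is exactly the ``single place needing a small correction'' you gesture at without identifying or fixing. Your sentence acknowledging the correction does not substitute for supplying it, since without it the chain \eqref{e.tassion.2}--\eqref{e.tassion.4} is not established in the discrete model.

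Two further points worth flagging. First, the paper does not follow Section~\ref{s.Tassion} but rather the \emph{quantitative} version of Tassion's method from Section~2 of~\cite{bg_16}, precisely because it needs to propagate an explicit, $\eps$-free upper bound on the terminal scale $\hat s$; the non-quantitative route you propose could plausibly be made to work, but then the $\eps$-uniformity of the final constant is not just a matter of observing that $c_0,\dots,c_3$ are universal. Second, in the quantitative scheme one needs an $\eps$-uniform \emph{lower} bound $\alpha_s\ge b(\kappa)>0$, obtained by replacing the universal polynomial $Q_1$ by $aQ_1$ with $a=a(\kappa)$ as in Lemma~4.6 of~\cite{bg_16}; your proposal does not address the possibility that $\alpha_s$ degenerates as $\eps\to 0$. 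Finally, the paper first handles $\eps^{-1}\in\Z_{>0}$ (so the model is $\Z^2$-periodic and the duality giving $\Pro[\cross^\eps_0(\calS)]=1/2$ is clean for $s\in 2\Z_{>0}$), then extends to general $\eps\in\,]0,1]$ by rescaling the field by $\lambda\in[1/2,2]$ and checking the constants can be taken uniform over this compact range; your ``monotonicity and FKG gluing'' sketch for non-lattice scales is plausible but less precise than what is actually needed.
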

Note that the constant $c$ above does not depend on $\eps$. As in the continuous case, the first result of this kind can be found in~\cite{bg_16} by combining Theorem~2.2 of~\cite{bg_16} with their Section~4. The novelty here is that the result holds for any $\alpha > 4$ and without any constraint on $(s,\eps)$. As in the proof of Theorem~\ref{t.RSW}, we need a quasi-independence result to prove Proposition~\ref{p.eps_rsw}. We are going to use Proposition~\ref{p.eps_qi} where the quasi-independence estimate is uniform in $\eps$.

\begin{proof}[Proof of Proposition~\ref{p.eps_rsw}]
As in Section~\ref{s.Tassion}, we follow Tassion's strategy from~\cite{tassion2014crossing}. However, since we need a constant $c$ which is uniform in $\eps$, it is more suitable to follow the quantitative version of Tassion's method presented in Section~2 of~\cite{bg_16}.

Before going into the proof, let us warn the reader that in Section~\ref{s.Tassion} we have used the notations from~\cite{tassion2014crossing} while in the present appendix we use the notations from~\cite{bg_16}. In particular, the notation $\phi_s$ has two different meanings; we hope that this will not confuse the reader.

We first assume that $\eps^{-1} \in \Z_{> 0}$ so that our model is $\Z^2$-periodic. As noted in~\cite{bg_16}, by a simple duality argument (which works since our lattice is a triangulation), we obtain that the probability that there is a left-right crossing of $[-s/2,s/2]^2$ made of black edges of $\calT^\eps$ is $1/2$ for any $s \in 2\Z_{>0}$. Hence we have the existence of some $c_0 \in ]0,1[$ such that the probability of this event is at least $c_0$ for any $s\in 2\Z_{>0}$ as assumed in Condition~3 of Definition~2.1 in~\cite{bg_16}.
We first prove the following lemma analogous to Lemma~2.7 of~\cite{bg_16}. Our way to state this lemma is a little different from~\cite{bg_16} since we think that, for the proof of this lemma to be correct, one has to consider variants of the event $\calH_s(\cdot,\cdot)$ as we do below. The reason why we need to make such a change is that the models are not continuous, which implies that the function $\psi_s$ (which is defined in the proof) is not continuous, so the proof written in~\cite{tassion2014crossing} does not work as is. Let us stress that, once one has made this small correction, all the other results of~\cite{bg_16} hold without any modification.
\begin{lem}\label{l.P1andP2}
For any $s \geq 1$, $-s/2 \leq \alpha \leq \beta \leq s/2$, let $\calH_s(\alpha,\beta)$ (resp. $\widetilde{\calH}^1(\alpha,\beta)$, $\widetilde{\calH}^2(\alpha,\beta)$) be the event that there is a path in $[-s/2,s/2]^2$ from the left side to $\{s/2\} \times [\alpha,\beta]$ (resp. to $\{s/2\} \times ]\alpha,\beta]$, to $\{s/2\} \times [\alpha,\beta[$) made of black edges of $\calT^\eps$. Also, let $\calX_s(\alpha)$ be defined exactly as in~\cite{tassion2014crossing,bg_16} (see for instance Figure~2.2 of~\cite{bg_16}). There exists a universal polynomial $Q_1 \in \R[X]$, positive on $]0,1[$, such that for every $s \in 2\Z_{>0}$, there exists $\alpha_s=\alpha_s(\eps,\kappa) \in [0,s/4]$ satisfying the following properties:
\bi 
\item[\textup{(P1)}] $\Pro \left[ \calX_s(\alpha_s) \right] \geq Q_1(c_0) \, .$
\item[\textup{(P2)}] If $\alpha_s < s/4$, then $\Pro \left[ \calH_s(0,\alpha_s) \right] \geq c_0/4 + \Pro \left[ \widetilde{\calH}_s^1(\alpha_s,s/2) \right] \, .$ 
\ei
\end{lem}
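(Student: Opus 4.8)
The plan is to reproduce Tassion's proof of his Lemma~2.1 in~\cite{tassion2014crossing}, in the quantitative form of Section~2 of~\cite{bg_16} (the one needed here, since the constants must not depend on $\eps$), modulo one adjustment. In the continuum one introduces $\phi_s(\alpha)=\Pro[\calH_s(0,\alpha)]-\Pro[\calH_s(\alpha,s/2)]$, uses that it is continuous (because $f$ is a.s.\ continuous with everywhere positive variance) and non-decreasing, and sets $\alpha_s=\min\{\phi_s^{-1}(c_0/4),s/4\}$. In the $\eps$-lattice model this fails: the analogue of $\phi_s$ is only a non-decreasing \emph{step} function, so $\phi_s=c_0/4$ may have no solution, and at a jump its value depends on whether the target segment $\{s/2\}\times[\alpha,\beta]$ is taken open or closed at an endpoint. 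Keeping track of these endpoints is exactly the purpose of the variants $\widetilde{\calH}^1_s$ (open on the left) and $\widetilde{\calH}^2_s$ (open on the right).

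First I would record the elementary one-sided continuity statements. Because a bounded region of $\calT^\eps$ carries only finitely many black paths, whose endpoints on the right side lie in $\eps\Z$, the map $\alpha\mapsto\Pro[\calH_s(0,\alpha)]$ is non-decreasing and right-continuous with left limit $\Pro[\widetilde{\calH}^2_s(0,\alpha)]$ at $\alpha$, and $\alpha\mapsto\Pro[\widetilde{\calH}^1_s(\alpha,s/2)]$ is non-increasing and right-continuous with left limit $\Pro[\calH_s(\alpha,s/2)]$. Hence $\psi_s(\alpha):=\Pro[\calH_s(0,\alpha)]-\Pro[\widetilde{\calH}^1_s(\alpha,s/2)]$ is non-decreasing and right-continuous with $\psi_s(\alpha^-)=\widetilde{\psi}_s(\alpha):=\Pro[\widetilde{\calH}^2_s(0,\alpha)]-\Pro[\calH_s(\alpha,s/2)]\le\psi_s(\alpha)$. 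I then set
\[
\alpha_s:=\sup\bigl\{\alpha\in[0,s/4]:\psi_s(\alpha)\le c_0/4\bigr\}\in[0,s/4],
\]
with the convention $\sup\emptyset=0$.

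It remains to extract (P1) and (P2) from the behaviour of $\psi_s$ at $\alpha_s$. The bound used repeatedly is the union bound: $\calH_s(-s/2,0)\cup\calH_s(0,s/2)$ is the event that $[-s/2,s/2]^2$ is crossed from left to right, of probability $\ge c_0$, and by the horizontal reflection symmetry of Condition~\ref{a.std} the two events have equal probability, so $\Pro[\calH_s(0,s/2)]\ge c_0/2$; splitting $[0,s/2]$ at $\alpha_s$ in the same way gives $\Pro[\widetilde{\calH}^2_s(0,\alpha_s)]+\Pro[\calH_s(\alpha_s,s/2)]\ge c_0/2$. Now if $\alpha_s<s/4$ then $\psi_s>c_0/4$ on $\,]\alpha_s,s/4]$, so right-continuity forces $\psi_s(\alpha_s)\ge c_0/4$, which is exactly (P2); moreover $\psi_s(\alpha)\le c_0/4$ for every $\alpha<\alpha_s$ by definition of the supremum, whence $\widetilde{\psi}_s(\alpha_s)\le c_0/4$, and combined with the preceding union bound this yields $\Pro[\calH_s(\alpha_s,s/2)]\ge c_0/8$, while $\psi_s(\alpha_s)\ge c_0/4$ yields $\Pro[\calH_s(0,\alpha_s)]\ge c_0/4$. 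If $\alpha_s=s/4$ one is either in the same situation (when $\psi_s(s/4)>c_0/4$) or in the degenerate ``cap'' case $\psi_s\le c_0/4$ on all of $[0,s/4]$, in which $\calX_s(s/4)$ is produced directly as in~\cite{tassion2014crossing,bg_16}. With the lower bounds $\Pro[\calH_s(0,\alpha_s)]\ge c_0/4$ and $\Pro[\calH_s(\alpha_s,s/2)]\ge c_0/8$ in hand, together with their images under the dihedral symmetries generated by Condition~\ref{a.std}, Tassion's gluing construction of $\calX_s(\alpha_s)$ --- which uses only those symmetries and the FKG inequality, Lemma~\ref{l.FKG} --- produces $\Pro[\calX_s(\alpha_s)]\ge Q_1(c_0)$ for a universal polynomial $Q_1$ positive on $\,]0,1[$, proving (P1). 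None of the remaining lemmas of Section~2 of~\cite{bg_16} sees the replacement of $\calH_s$ by the variants $\widetilde{\calH}^1_s,\widetilde{\calH}^2_s$, so they carry over verbatim.

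The real work is the bookkeeping hidden above: deciding at every occurrence whether a target segment is closed or open at its endpoints so that the monotone probabilities in play are one-sidedly continuous, \emph{and} so that their one-sided limits across the countably many jumps coincide with the probabilities of the events $\widetilde{\calH}^1_s$ and $\widetilde{\calH}^2_s$. Once these conventions are consistently fixed, the argument is a faithful transcription of the continuum proof; mishandling them is precisely the gap this lemma is meant to fill.
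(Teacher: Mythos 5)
Your proof is correct and takes essentially the same approach as the paper: both replace Tassion's continuous $\phi_s$ by a monotone step function together with its one-sided limits (the events $\widetilde{\calH}^1_s,\widetilde{\calH}^2_s$), choose $\alpha_s$ at the jump across $c_0/4$, and read (P2) from the right limit and the input to (P1) from the left limit. The only cosmetic difference is that you set $\psi_s=\widetilde\psi^1_s$ and define $\alpha_s$ as a supremum, while the paper keeps $\psi_s$ with both intervals closed and defines $\alpha_s$ as an infimum --- since $\widetilde\psi^1_s$ is the right limit of the paper's non-decreasing $\psi_s$, these produce the same $\alpha_s$ and the same bounds.
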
 
\begin{proof}
For every $\alpha \in [0,s/2]$, write:
\[
\psi_s(\alpha)=\psi_s(\kappa,\eps,\alpha)=\Pro \left[ \calH_s(0,\alpha) \right] - \Pro \left[ \calH_s(\alpha,s/2) \right] \, ,
\]
\[
\widetilde{\psi}_s^1(\alpha)=\widetilde{\psi}_s^1(\kappa,\eps,\alpha)=\Pro \left[ \calH_s(0,\alpha) \right] - \Pro \left[ \widetilde{\calH}_s^1(\alpha,s/2) \right]\, ,
\]
and
\[
\widetilde{\psi}_s^2(\alpha)=\widetilde{\psi}_s^2(\kappa,\eps,\alpha)=\Pro \left[ \widetilde{\calH}^2_s(0,\alpha) \right] - \Pro \left[ \calH_s(\alpha,s/2) \right]  \, .
\]
Note that:
\[
\forall \alpha \in [0,s/2[, \, \underset{\alpha' \rightarrow \alpha, \atop \alpha' > \alpha}{\lim} \psi_s(\alpha') = \widetilde{\psi}_s^1(\alpha) \, ; \, \forall \alpha \in ]0,s/2], \, \underset{\alpha' \rightarrow \alpha, \atop \alpha' < \alpha}{\lim} \psi_s(\alpha') = \widetilde{\psi}_s^2(\alpha) \, .
\]
Now, if $\Psi_s(s/4) > c_0/4$, then let $\alpha_s$ be the infimum over every $\alpha \in [0,s/4]$ such that $\psi_s(\alpha) > c_0/4$; otherwise let $\alpha_s=s/4$. Then, we have $\widetilde{\psi}_s^2(\alpha_s) \leq c_0/4$ and, if $\alpha_s < s/4$, we have $\widetilde{\psi}_s^1(\alpha_s) \geq c_0/4$. Thus, (P2) is satisfied. Concerning (P1), similarly as in Lemma~2.1 of~\cite{tassion2014crossing} we have:
\begin{eqnarray*}
c_0 & \leq & 2 \Pro \left[ \widetilde{\calH}_s^2(0,\alpha_s) \right] + 2 \Pro \left[ \calH_s(\alpha_s,s/2) \right]\\
& \leq & 4 \Pro \left[ \calH_s(\alpha_s,s/2) \right] + 2 \widetilde{\psi}^2_s(\alpha_s)\\
& \leq & 4 \Pro \left[ \calH_s(\alpha_s,s/2) \right] + c_0/2 \, .
\end{eqnarray*}
Finally, $\Pro \left[ \calH_s(\alpha_s,s/2) \right] \geq c_0/8$ thus as noted in~\cite{tassion2014crossing}, by a simple construction and by the FKG inequality we obtain that $\Pro \left[ \calX(\alpha_s) \right] \geq c_0\times (c_0/8)^4$ \, .
\end{proof}
Next, Lemmas~2.8 and~2.9 of~\cite{bg_16} apply readily. Now, define the universal funciton $\tau_1$ as in~(2.5) of~\cite{bg_16} and define the following function:
\[
\phi_s=\phi_s(\kappa,\eps) = \sup \big| \Pro \left[ A \cap B \right] - \Pro \left[ A \right] \Pro \left[ B \right] \big| \, ,
\]
where the supremum is over any event $A$ of the form $\Circ^\eps(\calA)$ where $\calA$ is an $\eps$-drawn annulus centered at $0$ and included in $[-s,s]^2$, and any event $B$ which is the intersection of at most $\log(s)$ events of the form $\Circ^\eps(\calA)$ where $\calA$ is an $\eps$-drawn annulus centered at $0$ and included in $[-s\log(s),s\log(s)]^2 \setminus ]-5s,5s[^2$. Next, write:
\[
\hat{s} = \hat{s}(\kappa,\eps) = \max \{ s \in \Z_{>0} \, : \, s \geq \exp(\tau_1(c_0)) \text{ and } \phi_s \geq \frac{c_0}{16} Q_3(c_0) \} \, ,
\]
where $Q_3$ is the universal positive function that comes from Lemma~2.9 of~\cite{bg_16}. We have the following lemma analogous to Lemma~2.10 of~\cite{bg_16}, where for any $0 < r < s <+\infty$, $\Circ_0^\eps(r,s)$ denotes the event that there is an $\eps$-circuit at level $0$ in the annulus $[-r,r]^2 \setminus ]-s,s[^2$, and where $Q_2$ is the universal positive function defined as in Lemma~2.8 of~\cite{bg_16}.
\begin{lem}
For any $s \in \Z_{>0}$, $s \geq \hat{s}$, if $\Pro \left[ \Circ^\eps_0(s,2s) \right] \geq Q_2(c_0)$, then there exists $s' \in [4s,\tau_1(c_0)s] \cap \Z_{>0}$ such that $\alpha_{s'} \geq s$.
\end{lem}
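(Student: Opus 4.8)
The plan is to follow the proof of Lemma~2.10 of~\cite{bg_16} --- itself a quantitative version of Lemma~3.2 of~\cite{tassion2014crossing} --- almost verbatim. That proof combines stationarity, the symmetries of Condition~\ref{a.std}, the FKG inequality (Lemma~\ref{l.FKG}), the duality of the triangulation $\calT$, the estimates (P1)--(P2) of Lemma~\ref{l.P1andP2}, and Lemmas~2.8--2.9 of~\cite{bg_16}; all of these survive the passage to our discrete model without change, since their proofs involve neither continuity nor quasi-independence. The only ingredient of~\cite{bg_16}'s Lemma~2.10 that genuinely uses quasi-independence is the implication
\[
\Pro\!\big[\Circ^\eps_0(5^i s,2\cdot5^i s)\big]\geq Q_3(c_0)\ \text{for all }i\in\{0,\dots,N\}\ \Longrightarrow\ \Pro\!\big[\Circ^\eps_0(s,\tau_1(c_0)s)\big]>1-c_0/4,
\]
where $N:=\lfloor\log_5(\tau_1(c_0)/2)\rfloor$. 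The plan is to establish this implication using the bound $\phi_s<\tfrac{c_0}{16}Q_3(c_0)$, which holds for every integer $s\geq\hat s$ by the definition of $\hat s$ --- and this is precisely where Proposition~\ref{p.eps_qi}, providing quasi-independence for $\eps$-circuit events \emph{uniformly in the mesh}, enters --- and then to invoke~\cite{bg_16,tassion2014crossing} for the remainder of the argument (which, exactly as in the proof of Lemma~\ref{l.tassion}, is where the contradiction hypothesis $\alpha_{s'}<s$ for all integer $s'\in[4s,\tau_1(c_0)s]$ is turned, via Lemmas~2.8--2.9 of~\cite{bg_16}, into the hypothesis of the displayed implication, and afterwards into the contradiction).

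To prove the implication displayed above I would first record the inclusion $\bigcup_{i=0}^N\Circ^\eps_0(5^i s,2\cdot5^i s)\subseteq\Circ^\eps_0(s,\tau_1(c_0)s)$: for $i\leq N$ one has $s\leq5^i s$ and $2\cdot5^i s\leq\tau_1(c_0)s$, so any $\eps$-circuit separating ${]-5^i s,5^i s[}^2$ from $[-2\cdot5^i s,2\cdot5^i s]^2$ also separates ${]-s,s[}^2$ from $[-\tau_1(c_0)s,\tau_1(c_0)s]^2$. Hence $\Pro[\Circ^\eps_0(s,\tau_1(c_0)s)^c]\leq\Pro\big[\bigcap_{i=0}^N\Circ^\eps_0(5^i s,2\cdot5^i s)^c\big]$, and I would then peel the annuli off one at a time, exactly as in the proof of Lemma~\ref{l.tassion} but with $\phi_S$ in the role of the quasi-independence error: each step contributes a multiplicative factor $1-Q_3(c_0)$ and an additive error at most $\phi_S$ at some scale $S$ with $2s\leq S\leq\tau_1(c_0)s$, the use of $\phi_S$ being legitimate because $S\geq2s>\hat s\geq\exp(\tau_1(c_0))$ and because the number $N+1$ of annuli involved depends only on $c_0$, hence is at most $\log s$ once $s\geq\hat s$. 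Summing the resulting geometric series gives
\[
\Pro\!\big[\Circ^\eps_0(s,\tau_1(c_0)s)^c\big]\ \leq\ (1-Q_3(c_0))^{N+1}+\frac{N+1}{Q_3(c_0)}\sup_{S\geq 2s}\phi_S,
\]
and since $\tau_1$ is chosen so that $(1-Q_3(c_0))^{N+1}<c_0/8$ while $\phi_S<\tfrac{c_0}{16}Q_3(c_0)$ for $S>\hat s$, the right-hand side is $<c_0/4$, as wanted.

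I do not expect a serious difficulty. The one genuinely new input --- a quasi-independence estimate for $\eps$-circuit events uniform in the mesh --- is already in hand (Proposition~\ref{p.eps_qi}), and it is exactly this uniformity that makes $\hat s$ finite: under Condition~\ref{a.pol_decay} with $\alpha>4$, Proposition~\ref{p.eps_qi} yields $\phi_s=O\big(s^{4-\alpha}(\log s)^{4}\big)\to0$. The only subtlety peculiar to the discrete model --- that the profile function $\psi_s$ is discontinuous, so that $\alpha_s$ must be defined through the one-sided variants $\widetilde{\calH}^1_s,\widetilde{\calH}^2_s$ --- has already been handled once and for all in Lemma~\ref{l.P1andP2} and does not recur here. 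The only point calling for real care is bookkeeping: one must match the annuli $[-2\cdot5^i s,2\cdot5^i s]^2\setminus{]-5^i s,5^i s[}^2$, and the scales at which $\phi$ is evaluated during the peeling, to the template $\big([-S,S]^2,\ [-S\log S,S\log S]^2\setminus{]-5S,5S[}^2\big)$ appearing in the definition of $\phi_S$ --- if necessary running the peeling over a sparser sub-collection of the annuli, chosen with a large enough aspect ratio that consecutive circuit events lie at the required separation --- and then to check that the (still constant) number of $\phi$-applications is absorbed by the choice of $\hat s$. All of this is routine and is carried out in Section~2 of~\cite{bg_16}.
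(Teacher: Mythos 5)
Your proposal follows the paper's own approach: it reduces the lemma, as in Lemma~2.10 of \cite{bg_16}, to a single quasi-independence implication, establishes it by peeling off circuits one annulus at a time while absorbing the additive error via $\phi_S < \tfrac{c_0}{16}Q_3(c_0)$ (guaranteed for all $S\geq\hat s$ by the definition of $\hat s$), and invokes \cite{bg_16,tassion2014crossing} for the surrounding machinery (Lemmas~2.8--2.9, Lemma~\ref{l.P1andP2}, FKG, symmetries), exactly as the paper does. One arithmetic slip, though: ``summing the resulting geometric series'' gives a factor $\sum_{j\geq 0}(1-Q_3(c_0))^j = 1/Q_3(c_0)$, not $(N+1)/Q_3(c_0)$, so your displayed bound should read
\[
\Pro\!\big[\Circ^\eps_0(s,\tau_1(c_0)s)^c\big]\ \leq\ (1-Q_3(c_0))^{N+1}+\frac{1}{Q_3(c_0)}\sup_{S\geq 2s}\phi_S\ <\ \frac{c_0}{8}+\frac{c_0}{16}\ <\ \frac{c_0}{4}\, .
\]
As written, the extraneous $N+1$ would make the additive term $\tfrac{(N+1)c_0}{16}$, which need not be $<c_0/8$, so the stated inequality $<c_0/4$ would not follow; with the factor removed, the argument closes and matches the paper. (Incidentally, you also state the inclusion $\bigcup_i\Circ^\eps_0(5^is,2\cdot 5^is)\subseteq\Circ^\eps_0(s,\tau_1(c_0)s)$ in the direction actually needed for the lower bound on the circuit probability.)
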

\begin{proof}
As noted in~\cite{bg_16}, since the rest of the proof is exactly the same as in~\cite{tassion2014crossing}, it is sufficient to prove that, if $s \geq \hat{s}$, then:
\begin{equation}\label{e.alpha_in_discrete_rsw}
\Pro \left[ \bigcap_{i=1}^{\lfloor \log_5(\tau_1) \rfloor} \Circ^\eps_0(5^is,2 \cdot 5^is)^c \right] < c_0/4 \, .
\end{equation}
The proof is the same as in~\cite{bg_16} since by our definition of $\hat{s}$, if $s \geq \hat{s}$ and if $i_0 \in \{ 1, \cdots, \lfloor \log_5(\tau_1) \rfloor - 1\}$, we have:
\begin{multline*}
\Pro \left[ \bigcap_{i=i_0}^{\lfloor \log_5(\tau_1) \rfloor} \Circ^\eps_0(\calA_{5^is,2 \cdot 5^is}0)^c \right]\\
\leq \Pro \left[ \Circ^\eps_0(\calA_{5^{i_0}s,2 \cdot 5^{i_0}s})^c \right]  \Pro \left[ \bigcap_{i=i_0+1}^{\lfloor \log_5(\tau_1) \rfloor} \Circ^\eps_0(\calA_{5^is,2 \cdot 5^is})^c \right]+ \frac{c_0}{16} Q_3 \, .
\end{multline*}
Note that here the fact that (P2) in Lemma~\ref{l.P1andP2} is written with $\widetilde{\calH}_s^1(\alpha_s,s/2)$ instead of $\calH_s(\alpha_s,s/2)$ does not change the proof at all.
\end{proof}
Now, define $\gamma(\nu)$, $t_\nu=t_\nu(\kappa,\eps)$ and $s_\nu=s_\nu(\kappa,\eps)$ as in~(2.8),~(2.9) and~(2.10) of~\cite{bg_16} with $\hat{s}$ instead of $s(\Omega)$ i.e.:
\begin{align*}
& \gamma(\nu) = 1+\log_{4/(3+2\nu)}(3/2+\nu) > 1 \, ,\\
& s_\nu=\max(\hat{s},\lfloor 6/\nu \rfloor + 1) \, ,\\
& t_\nu = (3/2+\nu) s_\nu^{\gamma(\nu)}\alpha_{s_\nu}^{1-\gamma(\nu)} \, .
\end{align*}
Then, the proof of Lemma~2.11 of~\cite{bg_16} applies readily with our definitions. Finally, as in the proof of Theorem~2.2 of~\cite{bg_16}, we obtain that for every $\nu \in ]0,1/2[$, there exists a universal positive continuous function $P_\nu$ defined on $[1,+\infty[ \times ]0,1[$ such that, for every $\rho \geq 1$ and every $s \in \Z_{>0}$ such that $s \geq t_\nu$, the probability that there is a black path in $[0,\rho s] \times [0,s]$ from the left side to the right side is at least $P_\nu(\rho,c_0)$.\\
At this point, we want to have an upper bound on $t_\nu=t_\nu(\eps)$ independent on $\eps$, i.e. we want to have an upper bound $\hat{s}=\hat{s}(\eps)$ and a lower bound on $\alpha_{s_\nu(\eps)}(\eps)$ that do not depend on $\eps$. To this purpose, first note that the functions $Q_2$, $Q_3$ and $P_\nu$ are continuous functions of $Q_1$ and that, as explained in Lemma~4.6 of~\cite{bg_16}, there exists $a=a(\kappa) > 0$ and $b=b(\kappa)>0$ such that, if one replace the universal function $Q_1$ by the function $aQ_1$ that depends only on $\kappa$, then we have $\alpha_s=\alpha_s(\kappa,\eps) \geq b$ for every $s$. More precisely, we can choose any $a \in ]0,1[$ and $b \in ]0,1/2[$ so that, for every $s$, the probability that $f$ is positive both in the $4b \times 4b$ box centered at $(-s/2,0)$ and the $4b \times 4b$ box centered at $(s/2,0)$ is at least $a$. Such quantities exist since $f$ is a.s. continuous and thanks to FKG. Secondly, note that, by Proposition~\ref{p.eps_qi}, $\phi_s$ is at most:
\[
C \, \log(s) \, (\log(s)s)^2 s^2 s^{-\alpha} \, .
\]
for some $C=C(\kappa)<+\infty$. Hence (and since $\alpha > 4$) $\hat{s}$ is less than some finite constant $M=M(\kappa)$ does not depend on $\eps$. Finally, $t_\nu$ is less than some finite constant that does not depend on $\eps$, and we have obtained Proposition~\ref{p.eps_rsw} for $\eps^{-1} \in \Z_{>0}$ and when the quad is a rectangle $[0,\rho] \times [0,1]$.\\
To end the proof, first note one can easily extend the result to any quad by reasoning as in the proof of Theorem~\ref{t.RSW}. Finally, to extend the result to any $\eps \in ]0,1]$, fix such an $\eps$, let $\lambda \in [1/2,2]$ such that $(\lambda \eps)^{-1} \in \Z_{>0}$ and define the planar Gaussian field $f_\lambda \, : \, x \mapsto f(\lambda x)$ with covariance function $(x,y) \mapsto \kappa_\lambda(x-y)$. For any $\eps' > 0$ and any quad $\calQ$, write $\cross_0^{\eps',\lambda}(\calQ)$ for the event $\cross_0^{\eps'}(\calQ)$ but with $f_\lambda$ instead of $f$. Note that we have:
\[
\cross^\eps_0(\calQ) = \cross^{\lambda\eps,\lambda}_0(\calQ) \, .
\]
Moreover, it is not difficult to see that, since $\lambda$ belongs to the compact subset of $]0,+\infty[$, $[1/2,2]$, one can find constant $a=a(\kappa_\lambda)$, $b=b(\kappa_\lambda)$ and $M=M(\kappa_\lambda,c_0)$ as above that are uniform in $\lambda$. This ends the proof.
\end{proof}

As in the continuous case, we can deduce that the one-arm event decreases polynomially fast. We first need a notation.
\begin{notation}
If $0<r<s<+\infty$, we write $\calA(r,s) = [-s,s]^2 \setminus ]-r,r[^2$ and we write $\arm_0^\eps(r,s$ (resp. $\arm_0^{*,\eps}(r,s)$) for the event that there is an $\eps$-black path rom the inner boundary of $\calA(r,s)$ to its outer boundary made of black edges (resp. that lives in the white region of the plane) in the discrete percolation model of mesh $\eps$ defined in the beginning of Section~\ref{s.weak-qi} with $p=0$.
\end{notation}

\begin{prop}\label{p.arm_eps}
Assume that $f$ satisfies Conditions~\ref{a.super-std},~\ref{a.std},~\ref{a.decay2} as well as Condition~\ref{a.pol_decay} for some $\alpha > 4$. There exists $C=C(\kappa)<+\infty$ and $\eta= \eta(\kappa) > 0$ such that, for each $\eps \in ]0,1]$, for each $s \in [1,+\infty[$ and $r \in [1,s[$:
\[
\Pro \left[ \arm_0^\eps(r,s) \right], \, \Pro \left[ \arm_0^{*,\eps}(r,s) \right] \leq C \, (r/s)^\eta \, .
\]
\end{prop}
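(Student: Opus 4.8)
The plan is to repeat, in the discrete setting, the proof of Proposition~\ref{p.arm}, substituting the uniform discrete RSW estimate Proposition~\ref{p.eps_rsw} for Theorem~\ref{t.RSW} and the uniform discrete quasi-independence estimate Proposition~\ref{p.eps_qi} for Theorem~\ref{t.qi_rect}. Since both of these inputs come with constants independent of $\eps\in]0,1]$, the constants $C$ and $\eta$ produced at the end will also be independent of $\eps$, which is the whole point of the statement.

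First I would reduce to $\arm_0^{*,\eps}$: exactly as in Proposition~\ref{p.arm}, the fact that $f$ is centered (so that $f\stackrel{d}{=}-f$, and a black $\eps$-arm for $f$ corresponds to a white $\eps$-arm for $-f$ up to the null event that one of the finitely many relevant lattice vertices lies on $\calN_0$) gives $\Pro[\arm_0^\eps(r,s)]=\Pro[\arm_0^{*,\eps}(r,s)]$, so it is enough to bound the latter. I would also dispose of the degenerate regimes right away: if $s$ is bounded by a constant depending only on $\kappa$, or if $s/r$ is, then $r/s\geq 1/s$ is bounded from below and the inequality holds trivially once $C$ is large; so from now on $s$ and $s/r$ are large, and in particular $rs\geq s$ is large too.

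Then I would copy the core argument of Proposition~\ref{p.arm}. Fix $h\in]0,1[$ small (to be chosen at the end in terms of $\alpha$). For $i\in\{0,\dots,\lfloor\log_5(\tfrac{s^h}{2r^{1-h}})\rfloor\}$, let $\calA_i$ be the $\eps$-drawn annulus obtained from $\calA\big(5^i(rs)^{1-h},\,2\cdot 5^i(rs)^{1-h}\big)$ by rounding both radii to multiples of $\eps$ — which perturbs each radius by less than $1$ and is harmless since all scales involved exceed a large $\kappa$-dependent constant — and let $\Circ^\eps_0(i)$ be the event of a black $\eps$-circuit in $\calA_i$. One still has $\arm_0^{*,\eps}(r,s)\subseteq\bigcap_i\Circ^\eps_0(i)^c$. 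By Proposition~\ref{p.eps_rsw} applied to four rectangles of fixed aspect ratio framing the hole of $\calA_i$, together with the FKG inequality Theorem~\ref{t.Pitt} for the corresponding increasing, finitely supported discrete events, there is $c=c(\kappa)\in]0,1[$, uniform in $\eps$, with $\Pro[\Circ^\eps_0(i)]\geq c$ for all $i$. The annuli $\calA_i$ are pairwise disjoint, and $\dist(\calA_i,\bigcup_{j>i}\calA_j)$ is of order $5^i(rs)^{1-h}$, hence larger than the threshold $d(\kappa)$ of Proposition~\ref{p.eps_qi}. Iterating Proposition~\ref{p.eps_qi} with $p=0$ along the $\calA_i$, precisely as in the telescoping computation of Proposition~\ref{p.arm}, one obtains, for some $C'=C'(\kappa)<+\infty$ uniform in $\eps$,
\[
\Pro\Big[\textstyle\bigcap_i\Circ^\eps_0(i)^c\Big]\ \leq\ (1-c)^{\lfloor\log_5(\frac{s^h}{2r^{1-h}})\rfloor}+\frac{C'}{c}\,\Big\lfloor\log_5\tfrac{s^h}{2r^{1-h}}\Big\rfloor\,(1+s^4)\,(rs)^{-\alpha(1-h)},
\]
and since $\alpha>4$ a small enough choice of $h$ makes the right-hand side at most $C(r/s)^\eta$ for suitable $C=C(\kappa)<+\infty$ and $\eta=\eta(\kappa)>0$, exactly as at the end of the proof of Proposition~\ref{p.arm}; by the equality of the previous paragraph this also gives the bound for $\arm_0^\eps(r,s)$.

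I do not expect a serious obstacle: the argument is essentially a transcription, the two delicate ingredients (uniform RSW and uniform quasi-independence) being already available as Propositions~\ref{p.eps_rsw} and~\ref{p.eps_qi}. The only points needing attention are bookkeeping ones — making sure every rectangle and annulus used is genuinely $\eps$-drawn so that those two propositions apply verbatim, and that the scales are large enough for their scale-dependent hypotheses — and both are handled by the reduction to the non-degenerate regime (where all relevant scales are at least a large $\kappa$-dependent constant) and by rounding radii to multiples of $\eps$.
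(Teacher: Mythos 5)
Your proposal matches the paper's proof: reduce to $\arm_0^{*,\eps}$ using $f\stackrel{d}{=}-f$, then transcribe the telescoping argument of Proposition~\ref{p.arm} with Propositions~\ref{p.eps_rsw} and~\ref{p.eps_qi} supplying the $\eps$-uniform inputs, after rounding the annuli to $\eps$-drawn ones.

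One small imprecision worth flagging: in the discrete model the reduction gives an inequality, not an equality. A black $\eps$-arm is a path of black \emph{edges}, while a white $\eps$-arm is a path in the white \emph{region} of the plane, which is a strictly fatter set than the union of edges with two white endpoints; so $f\mapsto -f$ only yields $\Pro[\arm_0^\eps(r,s)]=\Pro[\text{arm of edges with both endpoints white}]\leq\Pro[\arm_0^{*,\eps}(r,s)]$. (The paper records this as a two-sided chain $\Pro[\arm_0^{*,\eps}(r+\eps,s-\eps)]\leq\Pro[\arm_0^\eps(r,s)]\leq\Pro[\arm_0^{*,\eps}(r,s)]$; your invocation of ``vertices on $\calN_0$'' is not where the asymmetry comes from.) Fortunately you only use the direction that holds, so the final bound for $\arm_0^\eps$ still follows from the bound on $\arm_0^{*,\eps}$ and the argument goes through.
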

\begin{proof}
First note that, since $f$ and $-f$ have the same law, we have:\footnote{These inequalities are not equalities only because black and white regions of the plane are not totally dual. These would be equalities if we had $\eps^{-1}r \in \N$ and $\eps^{-1}s \in \N$.}
 \[
 (\Pro \left[ \arm_0^{\eps,*}(r+\eps,s-\eps) \right] \leq) \, \Pro \left[ \arm_0^{\eps}(r,s) \right] \leq \Pro \left[ \arm_0^{\eps,*}(r,s) \right] \, .
 \]
So it is sufficient to prove the result for $\arm_0^{*,\eps}(r,s)$. The proof is roughly the same as the proof of Proposition~\ref{p.arm} except that we use Propositions~\ref{p.eps_rsw} and~\ref{p.eps_qi} instead of Theorem~\ref{t.RSW} and Theorem~\ref{t.qi_rect}. The only difference is that we have to consider only $\eps$-annuli, but that is not a problem. The constants do not depend on $\eps$ since the constants in Propositions~\ref{p.eps_rsw} and~\ref{p.eps_qi} do not. 
\end{proof}

As in the continuous case, the following is a direct consequence of Proposition~\ref{p.arm_eps}:
\begin{prop}
With the same hypotheses as Proposition~\ref{p.arm_eps}, for each $\eps \in ]0,1]$ a.s. there is no unbounded black component in the discrete percolation model of mesh $\eps$ defined in the beginning of Section~\ref{s.weak-qi} with $p=0$.
\end{prop}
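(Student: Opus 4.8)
The plan is to mimic the proof of Proposition~\ref{p.kesten}, replacing the continuous one-arm estimate by its discrete counterpart Proposition~\ref{p.arm_eps}. Fix $\eps \in\, ]0,1]$ and set $m = \lceil 1/\eps \rceil$ and $r = m\eps \geq 1$, so that $\Lambda := [-r,r]^2$ is a box whose half-side is an integer multiple of $\eps$. First I would reduce to a single box: the translates $2rz + \Lambda$, $z \in \Z^2$, cover $\R^2$ (since $r \geq 1$), and the translation vectors $2rz = 2m\eps\,z$ lie in $\eps\Z^2$, which is a period lattice of the face-centered lattice $\calT^\eps$; since $f$ is stationary (Condition~\ref{a.super-std}), the law of the discrete percolation model at level $0$ is invariant under these translations. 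Any unbounded black component meets at least one box of this cover, so by a union bound over $z \in \Z^2$ it suffices to show that a.s. there is no unbounded black component meeting $\Lambda$.

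Next I would make the geometric reduction to arm events. If there is an unbounded black component $\mathcal{C}$ meeting $\Lambda$, then for every $s > r$ the component $\mathcal{C}$ contains (by connectedness, after starting from a point of $\mathcal{C} \cap \Lambda$ and following $\mathcal{C}$ until it first leaves $]-r,r[^2$ and then until it first meets $\partial[-s,s]^2$) a black path inside $\calA(r,s) = [-s,s]^2 \setminus\, ]-r,r[^2$ joining the inner boundary to the outer boundary; that is, $\arm_0^\eps(r,s)$ occurs. Moreover the events $\big(\arm_0^\eps(r,s)\big)_{s > r}$ are decreasing in $s$ (restricting a crossing of $\calA(r,s')$ to $\calA(r,s)$ for $r < s < s'$ yields a crossing of the latter), so
\[
\prob\big[\exists\text{ unbounded black component meeting }\Lambda\big] \;\leq\; \prob\Big[\bigcap_{s>r}\arm_0^\eps(r,s)\Big] \;=\; \lim_{s\to+\infty}\prob\big[\arm_0^\eps(r,s)\big]\,.
\]
By Proposition~\ref{p.arm_eps} the right-hand side is at most $\lim_{s\to+\infty} C\,(r/s)^\eta = 0$. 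Combining this with the union bound over $z \in \Z^2$ from the previous step finishes the proof.

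There is no real obstacle here: the argument is a routine monotonicity-and-union-bound deduction from Proposition~\ref{p.arm_eps}, entirely parallel to Proposition~\ref{p.kesten}. The only points requiring a line of care are the mild geometric bookkeeping in extracting the arm crossing from an unbounded component (handling the case where the component touches $\Lambda$ only near $\partial[-r,r]^2$), and choosing the covering boxes with half-side $r = \lceil 1/\eps\rceil\eps \geq 1$ so that simultaneously $r \geq 1$ (as required by Proposition~\ref{p.arm_eps}) and the covering translations are compatible with the period lattice $\eps\Z^2$ of $\calT^\eps$ (so that translation invariance of the discrete model applies). Neither point is substantive.
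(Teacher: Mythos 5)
Your proof is correct and follows the same route the paper intends (the paper only says the proposition is a direct consequence of Proposition~\ref{p.arm_eps}, ``as in the continuous case,'' referring to the one-line argument given for Proposition~\ref{p.kesten}: reduce to one box by translation invariance and union bound, then send $s\to\infty$ in the one-arm bound). The only genuine content you add beyond the paper's sketch is the care that, in the discrete setting, stationarity of $f$ alone does not make the percolation model invariant under all translations but only under the period lattice $\eps\Z^2$ of $\calT^\eps$, so one should tile by boxes whose offsets lie in $\eps\Z^2$ and whose half-side is both an integer multiple of $\eps$ and at least $1$; your choice $r = \lceil 1/\eps\rceil\,\eps$ handles both constraints at once. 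This is a legitimate (minor) point that the paper glosses over, and your treatment of it is correct.
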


\bibliographystyle{alpha}
\bibliography{ref_perco}

\ \\
{\bf Alejandro Rivera}\\
Univ. Grenoble Alpes, UMR5582, Institut Fourier, 38000 Grenoble, France\\
alejandro.rivera@univ-grenoble-alpes.fr\\
Supported by the ERC grant Liko No 676999\\

\medskip
\ni
{\bf Hugo Vanneuville}\\
Univ. Lyon 1, UMR5208, Institut Camille Jordan, 69100 Villeurbanne, France\\
vanneuville@math.univ-lyon1.fr\\
\url{http://math.univ-lyon1.fr/~vanneuville/}\\
Supported by the ERC grant Liko No 676999\\

\end{document}